\documentclass[10pt,a4paper,twoside]{amsart}

\usepackage{amsfonts, amssymb, amsmath, amsthm, bm}
\usepackage{mathrsfs} % \mathscr
\usepackage{latexsym}%utilisation des symboles LaTeX pour avoir un beau LaTeX
\usepackage{enumerate}
\usepackage{multicol}
\usepackage{verbatim}
\usepackage{dsfont}
\usepackage{url}
\usepackage{csquotes}
\usepackage{placeins}
\usepackage{epic}
\usepackage{graphicx, subcaption}
\usepackage{epstopdf}
\usepackage[colorlinks=true]{hyperref}
\hypersetup{citecolor=blue, linkcolor=blue}
\usepackage{mathabx} %% for widecheck
\newtheorem{thm}{Theorem}[section]
\newtheorem{lem}[thm]{Lemma}
\newtheorem{cor}[thm]{Corollary}

\newcommand{\Z}[1]{\mathbb{Z}/#1\mathbb{Z}}

\DeclareMathOperator{\sgn}{sgn}

\def\Kcal{\mathcal{K}}

\def\Ocal{\mathcal{O}}

\def\1{\mathds{1}}

\def\K{\mathcal{K}}

\def\mode{\mathbin{\,\textrm{mod}^*}}
\def\mod{\mathbin{\,\textrm{mod}\,}}

\DeclareMathOperator{\sinc}{sinc}
\let\myalpha=a

\let\mybeta=b

\title{The large sieve through Parseval, Large sifted sets are regular%
}
\author{Olivier Ramar\'e}
\address[O. Ramar\'e]{CNRS/ Institut de Math\'ematiques de Marseille, Aix 
 Marseille Universit\'e, U.M.R. 7373, Site Sud, Campus de Luminy, Case 907, 
 13288 
 Marseille Cedex 9, France.}
\email{olivier.ramare@univ-amu.fr}

\begin{document}

\subjclass[2010]{Primary: 11N13, 11N35, 11N36, Secondary: }

 \keywords{Large sieve inequality}

\maketitle
\begin{abstract}
  %\texttt{File \jobname.tex} 
   We modify the approach to the arithmetical form of the large sieve
   by relying on the Parseval identity rather than on an approximate
   Bessel inequality and as a consequence, we prove that sifted sets
   are either small or the attached Fourier polynomial is very regular
   at the origin.
   We also discuss the optimality of this approach and shall show in
   passing that
   $N\int_{-\delta}^\delta|S(\beta)|^2d\beta\ge(1-2/\sqrt{\delta
     N})|S(0)|^2$ for any trigonometric polynomial of length~$N$.
 \end{abstract}

%{\small \tableofcontents}

%%%%%%%%%%%%%%%%%%%%%%%%%%%%%%%%%%%%%
\section{A lightning introduction to roughly assess the results}
%%%%%%%%%%%%%%%%%%%%%%%%%%%%%%%%%%%%%

Since their discovery by V.~Brun in~\cite{Brun*19, Brun*19b}, 
sieve methods have proved very efficient to \emph{bound above} quantities
defined by the exclusion of some or several residue classes. Sieve
methods are now classified in three main categories: the combinatorial
sieves, the Selberg (or $\Lambda^2$) sieve and the large
sieves. Gallagher's larger sieve introduced in~\cite{Gallagher*71}
remains an isolated tool, though a powerful one where it applies. We
are concerned in this paper with the large sieves. We only mention here that
these arose from the initial work of Yu.\,V.~Linnik
in~\cite{Linnik*42}; we refer the readers to the
monographs~\cite{Montgomery*71} by H.\,L.~Montgomery
and~\cite{Ramare*06} for more historical details.

%%%%%%%%%%%%%%%%%%%%%%%% 
%\subsection*{Two results for assessment}
%%%%%%%%%%%%%%%%%%%%%%%%
In the classical sieve setting, we start from a sequence of (excluded)
subsets $\mathcal{L}_p\subset\Z{p}$ for each prime $p\le Q$, for
some~$Q$, and consider the integers~$n$ from a host sequence, say an
interval~$[M+1,M+N]$, that are such that, for every $p\le Q$, we have
$n\notin\mathcal{L}_p$. Here is a first corollary of our method.
%%%%%%%%%%%%%%%
\begin{cor}
  \label{easycor}
    Let $\mathcal{Z}$ be the set of integers $n\in[M+1,M+N]$ that do not belong to any of
    the sets $\mathcal{L}_p$, for $p\le Q$.
    Choose two positive real parameters $Q\le M$ and~$V\le N/2$.  We have
    \begin{equation*}
      \frac{1-2V/N}{N}\Bigl|\sum_{\substack{n\in\mathcal{Z}}}1\Bigr|^2
      \le
      \int_{-\infty}^\infty\Bigl|\sum_{\substack{n\in\mathcal{Z}\\
          |n-t|\le V}}1\Bigr|^2\frac{dt}{4V^2}
      \le \frac{1+ Q^2/V}{L(Q)}\sum_{n\in\mathcal{Z}}1
  \end{equation*}
  where
  \begin{equation}
    \label{defL}
    L(Q) = \sum_{q\le Q}\mu^2(q)\prod_{p|q}\frac{|\mathcal{L}_p|}{p-|\mathcal{L}_p|}.
  \end{equation}
\end{cor}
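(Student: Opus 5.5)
The left inequality is elementary and the right one should follow from Parseval plus Montgomery's local sieve inequality. Throughout, write $u_n=\1_{\mathcal{Z}}(n)$, $S(\alpha)=\sum_n u_n e(n\alpha)$ and
\begin{equation*}
f(t)=\sum_{|n-t|\le V}u_n .
\end{equation*}

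\emph{Left inequality.} Exchanging sum and integral gives $\int f(t)\,dt=2V\sum_{n\in\mathcal{Z}}1$. Moreover $f$ vanishes outside $[M+1-V,M+N+V]$, an interval of length at most $N+2V$. Cauchy--Schwarz therefore yields
\begin{equation*}
4V^2\Bigl|\sum_{n\in\mathcal{Z}}1\Bigr|^2\le (N+2V)\int f(t)^2dt .
\end{equation*}
Since $1/(1+x)\ge 1-x$ with $x=2V/N$, this gives the stated lower bound.

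\emph{Right inequality, step 1 (Parseval).} Note that $f=u*\1_{[-V,V]}$. Plancherel then gives
\begin{equation*}
\int|f|^2dt=\int_{\mathbb{R}}|S(\alpha)|^2\Bigl|\frac{\sin 2\pi V\alpha}{\pi\alpha}\Bigr|^2d\alpha .
\end{equation*}
Using the $1$-periodicity of $S$, the middle term of the corollary becomes $\int_0^1|S(\alpha)|^2K(\alpha)\,d\alpha$. Here
\begin{equation*}
K(\alpha)=\sum_{m\in\mathbb{Z}}\Bigl(\frac{\sin 2\pi V(\alpha+m)}{2\pi V(\alpha+m)}\Bigr)^2 ,
\end{equation*}
which is $1$-periodic and satisfies $0\le K\le 1$ near $0$ and $\int_0^1K=1/(2V)$.

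\emph{Step 2 (local sieve).} The sequence $u_ne(n\beta)$ is supported on $\mathcal{Z}$ for every real $\beta$. Montgomery's inequality, valid for squarefree $q$ since every $n\in\mathcal{Z}$ avoids $\mathcal{L}_p$ for $p\mid q$, then gives
\begin{equation*}
h(q)|S(\beta)|^2\le\sum_{a\bmod^* q}|S(a/q+\beta)|^2,\qquad h(q)=\mu^2(q)\prod_{p|q}\frac{|\mathcal{L}_p|}{p-|\mathcal{L}_p|} .
\end{equation*}
Multiply by $K(\beta)$, integrate over $\beta\in[0,1]$, and sum over $q\le Q$. This gives
\begin{equation*}
L(Q)\int_0^1|S(\beta)|^2K(\beta)\,d\beta\le\int_0^1|S(\alpha)|^2\sum_{q\le Q}\sum_{a\bmod^*q}K(\alpha-a/q)\,d\alpha .
\end{equation*}
The hypothesis $Q\le M$ is not needed for this step.

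\emph{Step 3 (main obstacle).} It remains to show that, uniformly in $\alpha$, the sum of $K$ over translates by the Farey points of order $Q$ is bounded:
\begin{equation*}
\sum_{q\le Q}\sum_{a\bmod^* q}K(\alpha-a/q)\le 1+Q^2/V .
\end{equation*}
Once this holds, Parseval $\int_0^1|S|^2=\sum_{n\in\mathcal{Z}}1$ concludes the proof.

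These points are $1/Q^2$-spaced modulo $1$. I would apply Gallagher's inequality $F(x)\le \delta^{-1}\int_{x-\delta/2}^{x+\delta/2}F+\tfrac12\int|F'|$ on disjoint intervals of length $\delta=1/Q^2$ around each point. The main term contributes at most $Q^2\int_0^1K=Q^2/(2V)$.

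The difficulty is to keep the derivative contribution under $1+Q^2/(2V)$. For that I would treat the point nearest $\alpha$ separately, bounding its term by $\sup K\le1$. On the remaining arcs I would control $\int|K'|$ either by comparing $K$ with the monotone majorant $\min(1,(2\pi V\|\alpha\|)^{-2})$, or by decomposing $K=\widehat{\phi}$ with $\phi$ a triangle function, so that the dual large-sieve duality gives the needed constant. If exact constants fail there, I would adjust the weighting in Step 2 to absorb them.
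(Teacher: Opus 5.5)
\textbf{Verdict: genuine gap at Step~3.} Your lower bound and Steps~1--2 are correct, with one small slip: $\sup K=K(0)=1+\theta(1-\theta)/(4V^2)$, where $\theta$ is the fractional part of $2V$, so $K\le 1$ holds only when $2V\in\mathbb{Z}$. Step~3 carries the entire content of the upper bound and is not proved. Moreover, the argument you sketch for it does not give the constant $1+Q^2/V$ when $Q^2=o(V)$ and $V=o(Q^3)$, which is exactly where the corollary is meant to be used (e.g.\ $Q^2\asymp V/\log V$).

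To see this, take the adjacent Farey fractions $x_0=\frac{Q-2}{Q-1}$ and $x_1=\frac{Q-1}{Q}$. They are $\frac{1}{Q(Q-1)}$ apart, so the Gallagher arcs of length $1/Q^2$ around them are separated by only $\frac{1}{Q^2(Q-1)}=o(1/V)$. Let $\alpha$ be their midpoint. The main lobe of $x\mapsto K(\alpha-x)$ has width $\asymp 1/V$, so it straddles both arcs. You bound the $x_0$-term by $\sup K$. On the arc around $x_1$, $K(\alpha-\cdot)$ falls from about $1$ to about $0$, so Gallagher's lemma leaves a derivative contribution of about $\frac12$. Your bound is therefore at least about $\frac32$, while the target is $1+o(1)$. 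The true sum is $o(1)$ there; it is the method that loses.

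Your fallback majorant $\min(1,(2\pi V\|\alpha\|)^{-2})$ is not a majorant of the periodized $K$: for $2V$ an odd integer, $K(1/2)=1/(4V^2)>1/(\pi^2V^2)$. The correct bound is $K(\alpha)\le(4V\|\alpha\|)^{-2}$. With it, a spacing argument covers roughly $Q^2\le 16V/\pi^2$, and the complementary range would need Gallagher's lemma with an explicit bound on $\int_0^1|K'|$, with little room to spare. None of this case analysis is carried out. ``Adjusting the weighting'' is not an option either, since the constant $1+Q^2/V$ is prescribed.

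\textbf{A repair.} You do not need to bound $\sup_\alpha\sum_{q,a}K(\alpha-a/q)$ at all. After Step~2 you only need
\[
\sum_{q\le Q}\sum_{a\mode q}\int_0^1|S(a/q+\beta)|^2K(\beta)\,d\beta\le(1+Q^2/V)\sum_n|u_n|^2 .
\]
Your Step~1, applied to $(u_ne(na/q))_n$, rewrites the left side as
\begin{equation*}
  \frac{1}{4V^2}\int_{-\infty}^{\infty}\sum_{q\le Q}\sum_{a\mode q}\Bigl|\sum_{|n-t|\le V}u_ne(na/q)\Bigr|^2dt .
\end{equation*}
For each $t$ the inner sum has at most $2V+1$ consecutive terms. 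The classical large sieve therefore bounds the integrand by $(2V+1+Q^2)\sum_{|n-t|\le V}|u_n|^2$. Integrating in $t$ gives $\bigl(1+\frac{1+Q^2}{2V}\bigr)\sum_n|u_n|^2\le(1+Q^2/V)\sum_n|u_n|^2$, since $Q\ge1$.

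Equivalently, Montgomery's sieve on each window $\mathcal{Z}\cap[t-V,t+V]$ gives $L(Q)f(t)^2\le(2V+1+Q^2)f(t)$, and you integrate in $t$; Steps~1--2 then become superfluous.

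This is a genuinely different and more elementary route than the paper's. The paper uses Parseval over the disjoint arcs $|\beta-a/q|\le 1/(q(q+Q))$ (Lemma~\ref{Step2b}). It then minorizes $\1_{[-\delta_q,\delta_q]}$ by a band-limited function built from Vaaler's functions (Lemma~\ref{minim}). That step needs $u_n\ge0$ and yields the factor $1+q(q+Q)/(2V)\le 1+Q^2/V$.
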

%%%%%%%%%%%%%%%
This is to be compared with the classical large sieve bound which, in
this case, yields the upper bound
\begin{equation}
  \label{eq:170}
  \frac{1}{N}\Bigl|\sum_{\substack{n\in\mathcal{Z}}}1\Bigr|^2
  \le \frac{1+ Q^2/N}{L(Q)}\sum_{n\in\mathcal{Z}}1.
\end{equation}
The parameter~$V$ thus introduces a small loss, replacing in most
cases a~$1$ by a~$1+o(1)$, but, aside from the method which follows an
untrodden path, the cornered quantity is of interest. Here is a corollary.
%%%%%%%%%%%%%%
\begin{cor}
  \label{ManypsL2}
  Assume the number $Z$ of primes inside $[M+1,M+N]$ satisfies, for
  some positive $C$:
  \begin{equation*}
    Z\ge \frac{2N}{\log N+C\log \log N}.
  \end{equation*}
  Then, for any $A\ge2$, we have:
  \begin{equation*}    
    \int_{-(\log N)^A/N}^{(\log N)^A/N}\biggl|S(\beta)-\frac{Z}{N}
    \sum_{n\le N} e(n\beta)\biggr|^2d\beta
    \ll_{A,C}
    \frac{Z\log\log N}{(\log N)^2}.
  \end{equation*}
\end{cor}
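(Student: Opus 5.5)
We take $S(\beta)=\sum_{n\in\mathcal{Z}}e(n\beta)$, where $\mathcal{Z}$ is the set of primes in $[M+1,M+N]$ and $M$ is shifted so that the comparison polynomial $\frac{Z}{N}\sum_{n\le N}e(n\beta)$ is attached to the same interval (after multiplying by a unimodular phase). The plan is to expand the square, apply Parseval to the smoothed local counts appearing in Corollary~\ref{easycor}, and use both sides of that corollary. The upper bound $\frac{1+Q^2/V}{L(Q)}Z$ and the lower bound $\frac{1-2V/N}{N}Z^2$ nearly coincide when $Z$ is about twice the prime count, so the middle quantity is squeezed.

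\textbf{Step 1: Parseval for the smoothed counts.} Set $f(t)=\sum_{n\in\mathcal{Z},|n-t|\le V}1=(\1_{\mathcal{Z}}*\1_{[-V,V]})(t)$. Its Fourier transform is $S(\beta)\,\frac{\sin 2\pi V\beta}{\pi\beta}$, so
\[\int|f(t)|^2\frac{dt}{4V^2}=\int|S(\beta)|^2\sinc^2(2\pi V\beta)\,d\beta,\]
with $\sinc x=\sin x/x$. We apply the same identity to $g=\frac ZN\1_{[M+1,M+N]}$, whose transform is $T(\beta)=\frac ZN\sum e(n\beta)$, and to $f-g$.

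\textbf{Step 2: choice of parameters.} We sieve by $\mathcal{L}_p=\{0\}$ for $p\le Q$ with $Q=\sqrt{N}/(\log N)^{B}$, and we put $V=N/(\log N)^{A+1}$. On $|\beta|\le(\log N)^A/N$ we then have $V|\beta|\le 1/\log N$, so $\sinc^2(2\pi V\beta)\ge 1-O(1/\log^2N)\ge 1/2$. It therefore suffices to bound $\int|S-T|^2\sinc^2(2\pi V\beta)\,d\beta=\int|f-g|^2\,dt/4V^2$.

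\textbf{Step 3: expansion.} We write
\[\int|f-g|^2=\int f^2-2\int fg+\int g^2.\]
Corollary~\ref{easycor} bounds $\int f^2/4V^2$ above by $\frac{1+Q^2/V}{L(Q)}Z$. Here $L(Q)=\sum_{q\le Q}\mu^2(q)/\varphi(q)=\log Q+O(1)=\tfrac12\log N-B\log\log N+O(1)$, and $Q^2/V=(\log N)^{A+1-2B}$ is small once $B\ge A+1$. The cross term is computed directly: $f$ is supported near the interval and $g$ is constant there, so $\int fg/4V^2=\frac ZN\cdot\frac{2V\cdot Z}{4V^2}\cdot 2V+O(\text{edge})=\frac{Z^2}{N}(1+O(V/N))$. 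Similarly $\int g^2/4V^2=\frac{Z^2}{N}(1+O(V/N))$. Hence
\[\int|f-g|^2\frac{dt}{4V^2}\le \frac{Z}{L(Q)}\bigl(1+(\log N)^{-1}\bigr)-\frac{Z^2}{N}+O\Bigl(\frac{Z^2V}{N^2}\Bigr).\]

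\textbf{Step 4: the squeeze (main obstacle).} By hypothesis $Z/N\ge 2/(\log N+C\log\log N)$. Thus $\frac{Z^2}{N}\ge \frac{Z}{\frac12\log N+\frac C2\log\log N}$, while $\frac{Z}{L(Q)}\le \frac{Z}{\frac12\log N-B\log\log N+O(1)}$. Their difference is
\[\ll Z\,\frac{(B+C/2)\log\log N}{(\log N)^2},\]
and the remaining error terms $Z(\log N)^{-2}$ and $Z^2V/N^2\ll Z/(\log N)^{A+2}$ are smaller. This gives the claimed bound $\ll_{A,C}Z\log\log N/(\log N)^2$.

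The delicate point is bookkeeping. The secondary terms of $L(Q)$ and the losses $Q^2/V$, $V/N$ must all be $O(\log\log N/\log^2N)$ relative to the main term. This is what forces $Q$ to be a fixed power of $\log N$ below $\sqrt N$, and it is where the $\log\log N$ loss comes from. One must also check that the edge effects in the cross term are genuinely $O(V/N)$, which follows since $f\le 2V+1$ and $f$ differs from its interior behaviour only within distance $V$ of the endpoints.
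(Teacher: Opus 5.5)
Your argument is correct and is essentially the paper's route, which obtains this corollary from Corollary~\ref{ManypsL20}, i.e.\ from Theorem~\ref{Nearby}. That route combines three ingredients: the sieve upper bound of Theorem~\ref{WeightedSieve}, in the form of Corollary~\ref{easycor} that you invoke; Parseval, to turn the smoothed counts into a weighted $L^2$-norm of $S$ (Lemma~\ref{Switch}); and the expansion of the square around the uniform model (Lemma~\ref{dec2}), which you carry out in physical space against the smoothed $\frac{Z}{N}\1_{[1,N]}$ rather than in frequency space against the extended sum over $-2V<n<N+2V$. The only slip is harmless: the trivial bound $f\le 2V+1$ gives an edge loss $O(ZV/N)=O(Z(\log N)^{-A-1})$ in the cross term rather than $O(Z^2V/N^2)$, which is still negligible, and your choice $V=N/(\log N)^{A+1}$ usefully keeps the $\sinc^2$ weight bounded below on the whole range of $\beta$.
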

%%%%%%%%%%%%%%
A more general version is stated in Corollary~\ref{ManypsL20}, while
an $L^\infty$-version is proved in Corollary~\ref{Manyps}.

We recall that we know that, see \cite{Ramare-SchlagePuchta*06}, that
$Z\le 2N/(3.53+ \log N)$ provided~$N$ is large enough. One
would like to increase this constant~3.53, and possibly show that it
may be taken infinitely large. The above corollary tells us that, if
this quantity $3.53$ remains smaller than $C\log\log N$, then the
associated trigonometric polynomial is very regular on a
long interval around the origin. We propose in Corollary~\ref{Manyps}
an $L^\infty$-version of this bound and in Corollary~\ref{Manypsq} a
version for primes in arithmetic progressions.

Improving on the inequality on $Z$ may have
consequences on the location of the possible Siegel zero, see for
instance~\cite{Motohashi*79} by Y.\,Motohashi or
\cite{Ramachandra-Sankaranarayanan-Srinivas*96} by
K.~Ramachandra, A.~Sankaranarayanan \&{} K.~Srinivas.

%%%%%%%%%%%%%%%%%%%%%%%%%%%%%%%%%%%%%
\section{Introduction and results}
%%%%%%%%%%%%%%%%%%%%%%%%%%%%%%%%%%%%%
Let us now proceed more slowly.

%%%%%%%%%%%%%%%%%%%%%%%%
\subsection*{A short roadmap on the Large Sieve}
%%%%%%%%%%%%%%%%%%%%%%%%
When using the large sieve inequality for sieving purpose, whether as
Linnik originally did or in the modern version that is
%\index{Linnik@Linnik,~Yu.\,V.}%
Montgomery's sieve, we rely on
two pieces of informations:
\begin{itemize}
\item Some \emph{local lower bounds} of arithmetical nature, see
  Theorem~\ref{L2S} or~\eqref{eq:6} below,
\item and a
\emph{global upper bound}, usually the
Large Sieve inequality, see~\eqref{eq:5} below. 
\end{itemize}
When applied to the primes-in-interval situation, we first define
\begin{equation}
  \label{defSinterval}
  S(\alpha)=\sum_{M<p\le M+N}e(p\alpha),
  \quad (e(\beta)=\exp 2i\pi\beta)
\end{equation}
where $p$ denotes a prime number and $M$ and $N\ge1$ are real
numbers. We readily find that, when $q\le M$, we have
\begin{equation}
  \label{eq:6}
  \sum_{a\mode q}|S(a/q)|^2\ge \frac{\mu^2(q)}{\varphi(q)}|S(0)|^2
\end{equation}
(the notation $a\mode q$ denotes a summation over every reduced
residue classes $a$ modulo~$q$)
and the Large Sieve inequality (see Theorem~1
in~\cite{Montgomery-Vaughan*73}) tells us that
\begin{equation}
  \label{eq:5}
  \sum_{q\le Q}\sum_{a\mode q}|S(a/q)|^2\le (N+Q^2)S(0).
\end{equation}
When we join both, we swiftly infer that
\begin{equation*}
  (\log Q) |S(0)|^2
  \le
  \sum_{q\le Q}\frac{\mu^2(q)}{\varphi(q)}|S(0)|^2
  \le (N+Q^2)S(0)
\end{equation*}
from which we deduce that the number $S(0)$ of primes in the interval
$(M,M+N]$ is at most\footnote{We selected
$Q=\sqrt{N}/\log N$ and assumed that $Q\le M$, an
assumption that is easily lifted.} $(2+o(1))N/\log N$. This is a
simple version of the celebrated Brun-Titchmarsh Inequality (see
Theorem~2 in~\cite{Montgomery-Vaughan*73}).

A distinctive feature of the Large Sieve situation, with respect to sieves in
general is that we sieve out intervals and not some general host
sequence, typically a sequence of polynomial values.

%%%%%%%%%%%%%%%%%%%%%%%%
\subsection*{On the method}
%%%%%%%%%%%%%%%%%%%%%%%%
We rely on (a slightly more general but essentially) the same local
lower estimate, but modify the global inequality. A first path goes
through an approximated Bessel inequality, like Proposition~1
in~\cite{Bombieri*71} by E.\,Bombieri. Montgomery \& Vaughan
in~\cite{Montgomery-Vaughan*73} use a (weighted or not ) version of
Hilbert's inequality. We refer the readers to \cite{Yangjit*23} by
W.\,Yangjit \index{Yangjit@Yangjit,~Wijit} and to
\cite{Carneiro-Littman*24} by E.\,Carneiro \& F.\,Littmann for recent
work on this inequality. Instead of that, we rely on the Farey
dissection of the unit interval, as in the circle method; our
global inequality is a consequence of the Parseval Identity. It is
recalled in Lemma~\ref{Step2b}.

%%%%%%%%%%%%%%%%%%%%%%%%
\subsection*{A more general result}
%%%%%%%%%%%%%%%%%%%%%%%%
We presented sieving in the first paragraph through an exclusion
hypothesis. This approach, though natural and historically older, 
leads to difficulties. It is better to say that
$n\in\Kcal_p=\Z{p}\setminus \mathcal{L}_p$.
This is for instance the
viewpoint adopted by M.\,N.~Huxley in his book~\cite{Huxley*72-2} and
by the present author in~\cite{Ramare*06, Ramare*10, Ramana-Ramare*25}.

Let us present the sieving situation from scratch.  A subset
$\K_q\subset\Z{q}$ is said to be \emph{multiplicative}\footnote{In
  earlier work, I used \emph{multiplivatively split} instead of the
  simpler \emph{multiplicative}.} if, when the decomposition of $q$ in
prime factors reads
\begin{equation*}
  q=p_1^{e_1}p_2^{e_2}\cdots p_r^{e_r},\quad (\forall i\neq j, \quad p_i\neq p_j),
\end{equation*}
and the Chinese Remainder Map is defined by
\begin{equation*}
  %\label{eq:7}
  \sigma:
  \begin{array}[t]{rcl}
    \Z{q}&\rightarrow&{\displaystyle \prod_{1\le i\le r}\Z{p_i^{e_i}}}\\
    x&\mapsto&\bigl(x\mod p_i^{e_i}\bigr),
  \end{array}
\end{equation*}
we have the property
$%\begin{equation}
  %\label{eq:12}
  \sigma^{-1}\bigl(\sigma\bigl(\K_q\bigr)\bigr)=\K_q$.
%\end{equation}
This is often written in the shorter form
%\begin{equation}
%  \label{eq:13}
 $ \K_q=\prod_{1\le i\le r}\K_{p_i^{e_i}}$.
%\end{equation}
We further say that the sequence $(\Kcal_q)_{q\le Q}$ is
\emph{consistent} when $\Kcal_q/d\mathbb{Z}=\Kcal_d$ whenever $d|q$.
It may be expedient to restrict the modulus~$q$ to square-free values,
it would solely be more difficult to write, as we should write
\begin{equation*}
  (\Kcal_q)_{q\le Q, \text{$q$ square-free}}.
\end{equation*}
We finally say that the \emph{Johnsen-Gallagher condition} 
  holds
  whenever
  \begin{equation}
    \label{jonhsen}
    \forall d|q,
    \forall y\in\K_d,\quad\#\{x\in\K_q:x\equiv y[d\}=|\K_q|/|\K_d|.
  \end{equation}
  This is equivalent to saying that the number of preimages in $\K_q$
  of any point~$y$ of~$\K_d$ does not depend on~$y$.
When $q$ is square-free and $\K_q$ is multiplicative, this condition
always holds.

Given a consistent multiplicative sequence $(\Kcal_q)_{q\le Q}$, we define
\begin{equation}
  \label{defg}
  g(q)=\prod_{p^\alpha\|q}
  \biggl(\frac{p^\alpha}{|K_{p^\alpha}|}
  -\frac{p^{\alpha-1}}{|K_{p^{\alpha-1}}|}\biggr).
\end{equation}

A sequence $(u_n)_{M<n\le M+N}$ of complex numbers is said \emph{have
support on $(\Kcal_q)_{q\le Q}$} whenever, when $u_n\neq0$, then $n$
belongs to every $\Kcal_q$ for $q\le Q$.
We may now state our main theorem.
%%%%%%%%%%%%%
\begin{thm}[A localized weighted sieve]
  \label{WeightedSieve}
  Let $(u_n)_{M<n\le M+N}$ be a sequence of non-negative real numbers
  having support on a consistent multiplicative sequence
  $(\Kcal_q)_{q\le Q}$ that satisfies the Johnsen-Gallagher
  condition. For any parameter $V\le N/2$, we have
  \begin{equation*}
    \frac{1-2V/N}{N}\Bigl|\sum_{\substack{M<n\le M+N}}u_n\Bigr|^2
    \le
    \int_{-\infty}^\infty\Bigl|\sum_{\substack{M<n\le M+N\\|n-t|\le V}}u_n\Bigr|^2\frac{dt}{4V^2}
    \le 2V
    \sum_{n\le N}u_n^2/L^*(Q,V)
  \end{equation*}
  where
  \begin{equation}
    \label{defLstar}
    L^*(Q,V) = \sum_{\substack{q\le Q}}
    \frac{g(q)}{2V+q(q+Q)}.
  \end{equation}
  The function $g$ is defined at~\eqref{defg}.
\end{thm}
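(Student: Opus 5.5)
The plan is to prove the two inequalities separately. The left one is Cauchy--Schwarz in~$t$. The right one comes from applying a Parseval-based sieve bound to each window of length~$2V$ and then integrating over the window position~$t$.

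\emph{Lower bound.} Put $U=\sum_{M<n\le M+N}u_n$ and $A(t)=\sum_{|n-t|\le V}u_n$. Then $A(t)$ vanishes outside an interval of length $N+2V$. Since $\int A(t)\,dt=2VU$, Cauchy--Schwarz gives
\begin{equation*}
\int_{-\infty}^{\infty}|A(t)|^2\frac{dt}{4V^2}\ge\frac{(2VU)^2}{4V^2(N+2V)}=\frac{U^2}{N+2V}\ge\frac{1-2V/N}{N}\,U^2 .
\end{equation*}

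\emph{Local step: the arithmetic lower bound.} Fix $t$ and set $v_n=u_n\1_{|n-t|\le V}$ and $T_t(\alpha)=\sum_n v_ne(n\alpha)$. This is a trigonometric polynomial of length at most $2V$, and $(v_n)$ is still supported on $(\Kcal_q)_{q\le Q}$. For each $\beta$, the twisted sequence $v_ne(n\beta)$ has the same support. The local lower bound of Theorem~\ref{L2S}, which relies on the Johnsen--Gallagher condition~\eqref{jonhsen}, therefore gives
\begin{equation*}
\sum_{a\mode q}|T_t(a/q+\beta)|^2\ge g(q)\,|T_t(\beta)|^2,
\end{equation*}
with $g$ as in~\eqref{defg}; I assume this normalisation of $g$.

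\emph{Global step: Parseval on Farey arcs.} The arcs $\{a/q+\beta:|\beta|\le\delta_q\}$ with $\delta_q=1/(q(q+Q))$, taken over $q\le Q$ and $a\mode q$, are disjoint modulo~$1$. Indeed $|a/q-a'/q'|\ge1/(qq')$, and $\delta_q+\delta_{q'}\le1/(qq')$ because $q,q'\le Q$. Parseval (Lemma~\ref{Step2b}) then yields
\begin{equation*}
\sum_{q\le Q}g(q)\int_{-\delta_q}^{\delta_q}|T_t(\beta)|^2d\beta\le\int_0^1|T_t|^2=\sum_{|n-t|\le V}u_n^2 .
\end{equation*}

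\emph{Main obstacle: a short-interval inequality.} What remains is a lower bound of the shape
\begin{equation*}
\int_{-\delta}^{\delta}|T(\beta)|^2d\beta\ge\frac{|T(0)|^2}{2V\,(2V+1/\delta)}
\end{equation*}
valid for every polynomial of length $2V$. Both regimes suggest it is true with room to spare:
\begin{itemize}
\item for $\delta\ll1/V$ the left side is about $2\delta|T(0)|^2$;
\item for large $\delta$ the left side tends to $\sum|v_n|^2\ge|T(0)|^2/(2V)$.
\end{itemize}
To prove it, I would write $T(0)$ as an integral of $T(\beta)$ against a kernel $K$ supported on $[-\delta,\delta]$ whose Fourier transform is at least~$1$ on an interval of length $2V$, for instance a Fej\'er- or Selberg-type minorant. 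Cauchy--Schwarz then gives $|T(0)|^2\le\|K\|_2^2\int_{-\delta}^{\delta}|T|^2$. The extremal problem of making $\|K\|_2^2\le 2V(2V+1/\delta)$ (or better) is where the real work lies. This is presumably the content of the abstract's inequality $N\int_{-\delta}^\delta|S|^2\ge(1-2/\sqrt{\delta N})|S(0)|^2$, which I would try to adapt first.

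\emph{Conclusion.} Combining the local step, the global step and the short-interval inequality with $\delta=\delta_q$ gives
\begin{equation*}
|T_t(0)|^2\,\frac{L^*(Q,V)}{2V}\le\sum_{|n-t|\le V}u_n^2 ,
\end{equation*}
since $2V+1/\delta_q=2V+q(q+Q)$. Note $|T_t(0)|^2=|A(t)|^2$. Integrating in $t$ and using $\int\sum_{|n-t|\le V}u_n^2\,dt=2V\sum_nu_n^2$ gives
\begin{equation*}
\int|A(t)|^2\frac{dt}{4V^2}\le\frac{2V\cdot2V\sum_n u_n^2}{4V^2\,L^*(Q,V)}\cdot 2V=\frac{2V\sum_nu_n^2}{L^*(Q,V)},
\end{equation*}
which is the right-hand inequality.
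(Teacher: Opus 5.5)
The step you call the main obstacle is a genuine gap. It carries all the analytic content, and you leave it unproved.

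As you phrase it (``valid for every polynomial''), your kernel plan also does not give it. With $\hat K\ge1$ on the window rather than $\hat K=1$ at its integers, the step $T(0)\le\sum_n v_n\hat K(n)$ requires $v_n\ge0$. Your heuristic $\int_{-\delta}^{\delta}|T|^2\approx2\delta|T(0)|^2$ for $\delta\ll1/V$ is likewise a positivity statement. Non-negativity is available here, and it is exactly where the paper uses it.

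Everything else is correct: the Cauchy--Schwarz lower bound, the twisted use of Theorem~\ref{L2S} on each window, and Lemma~\ref{Step2b} applied to $T_t$. Your route is genuinely different from the paper's, which never windows. The paper applies Lemma~\ref{Step2b} and Theorem~\ref{L2S} to the full $S$, and the localised quantity enters through Lemma~\ref{minim}. There the kernel $\hat D_{\myalpha_0,\delta}=\int_0^{2V}\hat C_{[-\lambda,\lambda],\delta}\,d\lambda$, built from Selberg majorants, satisfies $\hat D_{\myalpha_0,\delta}\le(2V\delta^{-1}+4V^2)\1_{[-\delta,\delta]}$ and $D_{\myalpha_0,\delta}(y)\ge(2V-|y|)^+$. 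So for $u_n\ge0$ one gets $\int_{-\delta}^{\delta}|S|^2\ge\int|A(t)|^2\,dt/(2V\delta^{-1}+4V^2)$ in one stroke.

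For the weak constant you ask for, no extremal kernel is needed. Suppose $v_n\ge0$ is supported on $|n-t|\le V$ and $|\beta|\le1/(8V)$. Then $|T(\beta)|\ge\Re\bigl(e(-t\beta)T(\beta)\bigr)\ge T(0)\cos(\pi/4)$. Hence $\int_{-\delta}^{\delta}|T|^2\ge\min(\delta,1/(8V))\,T(0)^2$, which dominates $T(0)^2/(2V(2V+1/\delta))$ once $V\ge1$ (recall $\delta_q\le1/2$).

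Two further corrections.
\begin{enumerate}
\item Your last display has a stray factor $2V$. The chain actually gives $\int|A|^2\,dt/(4V^2)\le\sum_nu_n^2/L^*(Q,V)$, which implies the displayed bound only when $V\ge1/2$.
\item The paper's proof is stronger than this. Lemma~\ref{BaseCamp} with $\myalpha=\myalpha_0$, where $\|\myalpha_0\|_2^2=2V$ and $\|\myalpha_0\|_1^2=4V^2$, gives $\int|A|^2\,dt\le2V\sum_nu_n^2/L^*(Q,V)$. After the weight $1/(4V^2)$ this reads $\sum_nu_n^2/(2VL^*(Q,V))$, which is the form behind Corollary~\ref{easycor} and Theorem~\ref{Nearby}.
\end{enumerate}
Your windowing route reaches that sharper form only with a short-interval bound of size $T(0)^2/(c(2V+1/\delta))$, not $T(0)^2/(2V(2V+1/\delta))$. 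The elementary estimate above gives $T(0)^2/(8V+1/\delta)$, hence the theorem with $8V$ in place of $2V$ in $L^*$. Getting the exact constant in front of $2V$ is precisely what the paper's Beurling--Selberg construction buys.
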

%%%%%%%%%%%%%
Theorem~\ref{Nearby} proposes a very close version of the same result that
may be more accessible.
This is to be compared with the weighted sieve proposed by H.~Montgomery \& R.\,C.~Vaughan
in Corollary~1
of~\cite{Montgomery-Vaughan*73} which reads in our notation
\begin{equation}
  \label{eq:4}
  \Bigl|\sum_{\substack{M<n\le M+N}}u_n\Bigr|^2
  \le
  \sum_{n\le N}u_n^2/L^{**}(Q, N)
\end{equation}
where
\begin{equation}
  \label{eq:14}
  L^{**}(Q,N) = \sum_{\substack{q\le Q}}
  \frac{g(q)}{N+\frac32 qQ}.
\end{equation}
The coefficient $3/2$ may even be replaced by
$\sqrt{1+\frac23\sqrt{6/5}}=1.315\dots$  after the work of E.~Preissmann in~\cite{Preissmann*84},
and this appears to be the
last improvement on this matter. The relative strength of our result
lies in the localization property, i.e. on the parameter~$V$ which can
be chosen typically of size~$N/(\log N)^{A}$.

Corollary~\ref{easycor} is an immediate consequence of this result.
We presented the sieving situation by using consistent sequences and
the Johnsen-Gallagher condition. An approach closer to the one
employed by~A.~Selberg in~\cite{Selberg*76} is proposed
in~\cite{Jha-Ramana-Ramare*25}. Notice that the non-negativity is
superfluous as we may apply the result to~$(|u_n|)$ rather than to
$(u_n)$. We can even allow complex values.

%%%%%%%%%%%%%% 
\subsection*{Nearby values}
%%%%%%%%%%%%%%
As announced at the beginning of this section, we may exploit the local
character of Theorem~\ref{WeightedSieve}. We start with an
$L^\infty$-bound and continue with an $L^2$-bound.
%%%%%%%%%%%%%%%%%%
\begin{thm}
  \label{conditional1}
  When $(u_m)$ is as in Theorem~\ref{WeightedSieve}, $S(0)>0$ and
  $V<N/4$, and under the assumption
  \begin{equation*}
    2S(0)^2\ge N\|S\|_2^2/L(Q)
  \end{equation*}
  where $L$ is given in~\eqref{defL} (or any positive lower bound of it), we have
  \begin{equation*}
    \biggl|\frac{S(\beta)}{S(0)}
    -
    \int_{-V}^{N+V}\mkern-15mu e(\beta t)\frac{dt}{N}
    \biggr|
    \le \sqrt{\frac{N\|S\|_2^2/L(Q)}{S(0)^2}-1}
    +\frac{2Q}{\sqrt{V}}+7 |\beta| V.
  \end{equation*}
\end{thm}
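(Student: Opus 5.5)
The plan is to turn the middle quantity of Theorem~\ref{WeightedSieve} into an $L^2$-estimate for a smoothed version of the sequence, and then pass to Fourier transforms pointwise by Cauchy--Schwarz. Put
\begin{equation*}
  F(t)=\frac{1}{2V}\sum_{\substack{M<n\le M+N\\|n-t|\le V}}u_n ,
\end{equation*}
so that the middle term of Theorem~\ref{WeightedSieve} is exactly $\int F(t)^2\,dt$. Note that $F$ is supported in $I=[M-V,M+N+V]$, that $\int F=S(0)$, and that
\begin{equation*}
  \int F(t)e(\beta t)\,dt=S(\beta)\,\frac{\sin 2\pi V\beta}{2\pi V\beta}.
\end{equation*}
In $S(\beta)$ I assume the frame where $M$ is shifted to $0$, so that $\int_{-V}^{N+V}$ in the statement is the transform of $\1_I$. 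The upper bound of Theorem~\ref{WeightedSieve} will be used in the form
\begin{equation*}
  \int F^2\le \frac{2V\|S\|_2^2}{L^*(Q,V)} .
\end{equation*}
Since $q(q+Q)\le 2Q^2$ for $q\le Q$, we have $L^*(Q,V)\ge L(Q)/(2V+2Q^2)$, and therefore
\begin{equation*}
  \int F^2\le \bigl(1+Q^2/V\bigr)\,\|S\|_2^2/L(Q).
\end{equation*}

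\textbf{Step 1: an $L^2$ bound for the deviation from a constant.} With $c=S(0)/N$, expand $\int|F-c\1_I|^2$. Since $F$ is supported in $I$ and $|I|=N+2V$, this gives
\begin{equation*}
  \int|F-c\1_I|^2=\int F^2-\frac{2S(0)^2}{N}+\frac{S(0)^2(N+2V)}{N^2}
  \le\frac{S(0)^2}{N}\Bigl(X-1+\frac{Q^2}{V}X+\frac{2V}{N}\Bigr),
\end{equation*}
where $X=N\|S\|_2^2/(L(Q)S(0)^2)$. The hypothesis $2S(0)^2\ge N\|S\|_2^2/L(Q)$ says precisely that $X\le 2$, so the term $Q^2X/V$ is at most $2Q^2/V$.

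\textbf{Step 2: from $L^2$ to a pointwise bound.} For every $\beta$,
\begin{equation*}
  \Bigl|\widehat F(\beta)-c\,\widehat{\1_I}(\beta)\Bigr|\le\int_I|F-c\1_I|\le |I|^{1/2}\,\|F-c\1_I\|_2 .
\end{equation*}
Dividing by $S(0)$ and using $\sqrt{a+b+\cdots}\le\sqrt a+\sqrt b+\cdots$ gives a bound of the form
\begin{equation*}
  \sqrt{1+2V/N}\Bigl(\sqrt{X-1}+\sqrt2\,Q/\sqrt V+\sqrt{2V/N}\Bigr).
\end{equation*}
Here $c\,\widehat{\1_I}(\beta)/S(0)=\int_{-V}^{N+V}e(\beta t)\,dt/N$, which is the main term in the statement.

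\textbf{Step 3: replace $\widehat F$ by $S$.} We have
\begin{equation*}
  |S(\beta)|\,\Bigl|1-\frac{\sin 2\pi V\beta}{2\pi V\beta}\Bigr|\le S(0)\min\bigl(2,\,c_0(V\beta)^2\bigr)\le c_1 S(0)\,|\beta|V,
\end{equation*}
and the constant $7$ comes from this term together with the leftovers. When $|\beta|V\ge 2/7$ the statement is trivial anyway, since its left-hand side is at most $1+(N+2V)/N\le 5/2$.

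\textbf{Main obstacle.} The hard part is the bookkeeping of the parasitic terms: the factor $\sqrt{1+2V/N}$ multiplying $\sqrt{X-1}$, and the $\sqrt{2V/N}$ coming from $|I|>N$. Neither appears in the statement. My first attempt is to optimise the constant $c$, or equivalently to compare $F$ with $\frac{S(0)}{N}\1_{[M,M+N]}$ convolved with the normalised box of width $2V$ instead of with $c\1_I$. That choice removes the $2V/N$ discrepancy in the cross term, and its Fourier transform differs from $\int_{-V}^{N+V}e(\beta t)\,dt/N$ only by a factor $1+O((V\beta)^2)$, which can be absorbed into $7|\beta|V$. The factor $\sqrt{1+2V/N}\le\sqrt{3/2}$ (recall $V<N/4$) on the $Q/\sqrt V$ term is absorbed by raising $\sqrt2$ to $2$. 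The remaining interaction with $\sqrt{X-1}$ is where I expect the real work: one must use $X\le2$ and $V<N/4$ carefully, and possibly run the Cauchy--Schwarz step with weight $|I|$ replaced by $N$ after restricting to $[M,M+N]$, handling the two end intervals of length $V$ separately.
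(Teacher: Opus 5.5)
Your Steps~1--2 are the paper's argument. Step~1 is the identity of Lemma~\ref{integraldecomposition} for the box $\1_{[-V,V]}$, and Step~2 is the Cauchy--Schwarz step of Lemma~\ref{integraldecompositionBis}. The bound $2VL^*(Q,V)\ge L(Q)/(1+Q^2/V)$ is also the one the paper uses. Your radicand $(1+2V/N)\bigl((1+Q^2/V)X-1+2V/N\bigr)$ is even no larger than the paper's $\frac{(1+Q^2/V)X}{1-2V/N}-1$.

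The factor $\sqrt{1+2V/N}$ on $\sqrt{X-1}$ only comes from splitting the root too early. Since $X\le 2$, you have $(1+2V/N)(X-1)\le X-1+2V/N$, and with $V<N/4$ you get $\sqrt{X-1}+\sqrt{3}\,Q/\sqrt{V}+\sqrt{5V/N}$ plus your $\Ocal(|\beta|V)$.

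The term of order $\sqrt{V/N}$, however, cannot be removed, because the statement as printed is false. At $\beta=0$ its left-hand side equals $|1-(N+2V)/N|=2V/N$ for every admissible sequence, while its right-hand side is $\sqrt{X-1}+2Q/\sqrt{V}$. Take $M=0$, $Q=1$ (so $L(1)=1$) and $u_n=1$ for $1\le n\le N$. Then $X=1$, the hypothesis holds, and with $V=N/5$ the claim reads $2/5\le 2\sqrt{5/N}$, which is false for $N\ge 126$. A genuine sieve does no better: $Q=2$, $\Kcal_2=\{1\}$ and $u_n=1$ for odd $n\le N$ with $N$ even give $L(2)=2$ and again $X=1$. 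The claim then becomes $2/5\le 4\sqrt{5/N}$, which fails for $N>500$.

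Your proposed repair fails at exactly this point. Whatever you do with the cross term when comparing $F$ with $G=\frac{S(0)}{N}\1_{[0,N]}$ convolved with $\frac{1}{2V}\1_{[-V,V]}$, the main term changes. $G$ has Fourier transform $\frac{S(0)}{N}\sinc(2\pi V\beta)\int_0^N e(\beta t)\,dt$, which equals $S(0)$ at $\beta=0$, whereas $\frac{S(0)}{N}\int_{-V}^{N+V}dt=(1+2V/N)S(0)$. After division by $S(0)$ the two main terms differ by $2V/N$ at the origin, not by a factor $1+\Ocal((V\beta)^2)$, and nothing proportional to $|\beta|V$ can absorb that.

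The paper's proof has the same hole. From $\sqrt{\frac{(1+Q^2/V)X}{1-2V/N}-1}+2\pi|\beta|V$, the assumptions $X\le 2$ and $V<N/4$ honestly give $\sqrt{X-1}+2Q/\sqrt{V}+2\sqrt{2V/N}+2\pi|\beta|V$, and the term $2\sqrt{2V/N}$ is silently dropped. What both arguments actually prove is the stated inequality with this extra term added (or your $\sqrt{5V/N}$). That term is harmless in Corollary~\ref{Manyps}, since there $V=N/(\log N)^{B+1}$.

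There are also two small slips. First, the upper bound from Theorem~\ref{WeightedSieve} should be $\int F^2\le\|S\|_2^2/(2VL^*(Q,V))$, which is Lemma~\ref{BaseCamp} with $A=2V$, $B=4V^2$; the printed $2V\|S\|_2^2/L^*$ is a misprint and does not yield your next line. Second, the trivial range in Step~3 starts at $|\beta|V\ge 5/14$, not $2/7$.
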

%%%%%%%%%%%%%%%%%%
The approximation is rather weak, but the range in $\beta$ is
very large. 

It is worth trying this result on a typical problem.
%%%%%%%%%%%%%%
\begin{cor}
  \label{Manyps}
  Assume the number $Z$ of primes inside $[M+1,M+N]$ satisfies, for
  some positive $C$:
  \begin{equation*}
    Z\ge \frac{2N}{\log N+C\log \log N}.
  \end{equation*}
  Then, for any $B\ge2$, we have for $N\gg e^{4B}/B$:
  \begin{equation*}
    \biggl|\sum_{\substack{M+1\le p\le M+N}}\mkern-15mu e(p\beta)
    -
    2\int_{0}^{N} \frac{e(\beta t)dt}{\log N}
    \biggr|
    \le 2Z \sqrt{\frac{(B+C)\log\log N}{\log N}}
  \end{equation*}
  uniformly for every $\beta$ such that $N|\beta|\le (\log N)^{B}$.
\end{cor}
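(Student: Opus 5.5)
We plan to apply Theorem~\ref{conditional1} with $u_n$ the indicator of the primes in $(M,M+N]$ (more precisely of primes $p>Q$, discarding the at most $Q$ primes $\le Q$, which only affect the final bound by $O(Q)$). Then $S(0)=Z$ and $\|S\|_2^2=Z$. Since $p>Q$, every prime avoids the class $0\bmod p$ for $p\le Q$, so $|\mathcal{L}_p|=1$. Hence $L(Q)=\sum_{q\le Q}\mu^2(q)/\varphi(q)\ge\log Q$. Choose $Q=\sqrt{N}/(\log N)^{B+2}$, so that $\log Q=\tfrac12\log N-(B+2)\log\log N$. We then need to check the hypothesis $2Z^2\ge NZ/L(Q)$, i.e. $Z\ge N/(2L(Q))$. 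With $L(Q)\ge \frac12\log N-(B+2)\log\log N$ this reads $Z\ge N/(\log N-2(B+2)\log\log N)$, which follows from the assumed lower bound $Z\ge 2N/(\log N+C\log\log N)$ once $N$ is large; the condition $N\gg e^{4B}/B$ is meant to secure this and the later inequalities.

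Next we bound the three error terms. The main term is
\[
\frac{NZ}{L(Q)Z^2}-1=\frac{N}{ZL(Q)}-1\le\frac{\log N+C\log\log N}{\log N-2(B+2)\log\log N}-1\ll\frac{(B+C)\log\log N}{\log N},
\]
and its square root gives the announced $\sqrt{(B+C)\log\log N/\log N}$, after a careful tracking of constants. We take $V=N/(\log N)^{2B+1}$ (certainly $<N/4$). Then $7|\beta|V\le 7(\log N)^{B}V/N=7(\log N)^{-B-1}$, which is negligible. Also $2Q/\sqrt V=2(\log N)^{-B-2}(\log N)^{B+1/2}=2(\log N)^{-3/2}$, which is negligible too. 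The parameters may need to be tuned so that these remain below the main term; both are powers of $\log N$ smaller than $(\log N)^{-1/2}$, so there is room.

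Finally we multiply by $Z$ and replace the main term $Z\int_{-V}^{N+V}e(\beta t)\,dt/N$ by $2\int_0^N e(\beta t)\,dt/\log N$. The boundary pieces $[-V,0]$ and $[N,N+V]$ contribute at most $2VZ/N$, which is negligible. The difference $|Z/N-2/\log N|\cdot N$ is bounded by using the Brun--Titchmarsh upper bound $Z\le(2+o(1))N/\log N$ together with the hypothesis, giving $|Z-2N/\log N|\ll Z\,C\log\log N/\log N$. This is dominated by $Z\sqrt{(B+C)\log\log N/\log N}$.

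The main obstacle is the constant bookkeeping, namely obtaining exactly the factor $2$ in front of $Z\sqrt{\cdots}$ with the stated range $N\gg e^{4B}/B$. This requires making the lower bound $L(Q)\ge\log Q$ explicit (e.g.\ $\sum_{q\le Q}\mu^2(q)/\varphi(q)\ge\log Q$ for all $Q\ge1$) and checking that the secondary terms can be absorbed.
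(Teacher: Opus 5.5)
Your proposal is correct in outline and is essentially the paper's argument: apply Theorem~\ref{conditional1} with $L(Q)\ge\log Q$, take $V$ and $Q$ to be $N$ and $\sqrt N$ divided by powers of $\log N$, then pass to $2\int_0^N e(\beta t)\,dt/\log N$. The paper leaves that last step to the reader; your treatment of it works provided you use Brun--Titchmarsh in the exact Montgomery--Vaughan form $Z\le 2N/\log N$, since an unquantified $(2+o(1))N/\log N$ does not give $|Z-2N/\log N|\ll ZC\log\log N/\log N$.

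The one difference worth noting is your parameter choice. With $y=\log\log N/\log N$, your $Q=\sqrt N/(\log N)^{B+2}$ gives $2\log Q=\log N-2(B+2)\log\log N$, so the main term is $\sqrt{(2B+C+4)y}$. The factor $2$ then survives only because $2B+C+4<4(B+C)$, and the margin $2B+3C-4$ vanishes as $B=2$, $C\to0$. The paper instead takes $V=N/(\log N)^{B+1}$ and $Q=\sqrt N/(\log N)^{B/2+1}$. This loses only $(B+2)\log\log N$ in $2\log Q$, giving the main term $\sqrt{(B+C+2)y}$ with uniform room, at the cost of secondary terms $2/\sqrt{\log N}$ and $7/\log N$ that are still $o(\sqrt y)$.
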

%%%%%%%%%%%%%%

Let us specify that the implied constant in $N\gg e^{4B}/B$ is also
independent of~$C$.
We stated these results for primes in interval, for clarity without any additional
congruence condition attached to these primes, but the same results
hold in the more general setting. Let us record such a statement.
%%%%%%%%%%%%%%
\begin{cor}
  \label{Manypsq}
  Let $q\ge1$ be a modulus and $a$ be coprime to~$q$.
  Let $N>q$ be given.
  Assume the number $Z$ of primes confurent to $a$ modulo~$q$ inside
  $[M+1,M+N]$ satisfies, for 
  some positive $C$:
  \begin{equation*}
    Z\ge \frac{2N}{\varphi(q)(\log (N/q)+C\log \log (N/q)}.
  \end{equation*}
  Then, for any $B\ge2$, we have for $N/q\gg e^{4B}/B$:
  \begin{equation*}
    \biggl|\sum_{\substack{M+1\le p\le M+N\\ p\equiv a[q]}}\mkern-15mu e(p\beta)
    -
    2\int_{0}^{N} \frac{e(\beta t)dt}{\log N}
    \biggr|
    \le 2Z \sqrt{\frac{(B+C)\log\log (N/q)}{\log (N/q)}}
  \end{equation*}
  uniformly for every $\beta$ such that $(N/q)|\beta|\le \log^B (N/q)$.
\end{cor}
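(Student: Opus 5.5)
We derive Corollary~\ref{Manypsq} from Theorem~\ref{conditional1}, following the same route as for Corollary~\ref{Manyps}, after a linear change of variables that turns primes congruent to $a$ modulo $q$ into a sifted set inside an interval of length about $N/q$.

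\textbf{Reduction.} Write $p=a+qm$. The integers $m$ then run over an interval $M'<m\le M'+N'$ with $N'=N/q$ (up to $O(1)$), and
\[
\sum_{\substack{M+1\le p\le M+N\\ p\equiv a[q]}} e(p\beta)=e(a\beta)\sum_{m} u_m\, e(m\cdot q\beta),
\]
where $u_m=1$ when $a+qm$ is prime. For primes $\ell\nmid q$ with $\ell\le Q\le \sqrt{M'}$, set $\mathcal{L}_\ell=\{-a\bar q\}$. For primes $\ell\mid q$, set $\mathcal{L}_\ell=\emptyset$, so that these primes contribute nothing. The resulting sequence $(\Kcal_d)$ is consistent and multiplicative on square-free moduli, and the Johnsen--Gallagher condition holds for square-free moduli. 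We get
\[
L(Q)=\sum_{\substack{d\le Q\\(d,q)=1}}\frac{\mu^2(d)}{\varphi(d)}\ge \frac{\varphi(q)}{q}\bigl(\log Q+c\bigr).
\]
Since $\|S\|_2^2=Z=S(0)$, the ratio in Theorem~\ref{conditional1} becomes $N' Z/(L(Q)Z^2)=N'/(L(Q)Z)$. The hypothesis on $Z$ gives
\[
\frac{N'}{L(Q)Z}\le \frac{\log N'+C\log\log N'}{2\log Q+2c}.
\]

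\textbf{Parameters.} Take $Q=\sqrt{N'}/(\log N')^{B+1}$, so that $2\log Q=\log N'-2(B+1)\log\log N'$. The square-root term is then at most
\[
\sqrt{\frac{(2B+2+C)\log\log N'}{\log N'}}\,(1+o(1)).
\]
This bound is also what guarantees the assumption $2S(0)^2\ge N'\|S\|_2^2/L(Q)$ of Theorem~\ref{conditional1} once $N'$ is large; the condition $N'\gg e^{4B}/B$ is what lets the $\log\log$ term be absorbed. Next take $V=N'/(\log N')^{2B+1}$, which is well below $N'/4$. Then
\[
\frac{2Q}{\sqrt V}=\frac{2}{(\log N')^{1/2}},
\qquad
7|\beta q|V\le \frac{7}{(\log N')^{B+1}},
\]
using $|\beta q|N'=|\beta|N\le(\log N')^B$. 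Both terms are negligible against the main error.

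\textbf{Main term.} Theorem~\ref{conditional1} yields the approximation $S(0)\int_{-V}^{N'+V}e(q\beta t)\,dt/N'$. Three adjustments turn this into the form in the statement:
\begin{itemize}
\item Changing the variable $t\mapsto qt$ gives an integral over an interval of length $N$.
\item Dropping the $V$-tails costs $O(ZV/N')$, which is small.
\item Replacing $Z/N'$ by $2/\log N$ requires an \emph{upper} bound as well. The Brun--Titchmarsh inequality gives $Z\le 2N/(\varphi(q)\log N')\,(1+o(1))$, so $Z$ is pinned down to within a relative error $O(\log\log N'/\log N')$. Because $\beta$ multiplies an integral of modulus at most $N$, this costs only $O(Z\log\log N'/\log N')$, which is smaller than the target.
\end{itemize}
There is one bookkeeping point here. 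In Theorem~\ref{conditional1} the approximant has mass $S(0)$, while the statement uses the factor $2/\log N$ and not $2/(\varphi(q)\log N)$. The normalisation has to be tracked carefully; I would present the main term as $\frac{Z}{N}\int_0^N e(\beta t)\,dt$ and check its equivalence with the stated form.

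\textbf{Main obstacle.} The delicate step is the constants: getting the factor $2\sqrt{(B+C)\ldots}$ rather than $\sqrt{(2B+2+C)\ldots}$. This requires choosing $Q$ and $V$ to balance losses, e.g.\ $Q=\sqrt{N'}/(\log N')^{B/2+1}$ with a matching $V$, and absorbing the remaining constants into the factor $2$ using $B\ge2$ and $N'\gg e^{4B}/B$.
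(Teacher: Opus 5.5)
Your route is essentially the one the paper has in mind. You substitute $p=a+qm$, sift $m$ by the primes $\ell\nmid q$ so that $L(Q)\ge\frac{\varphi(q)}{q}\log Q$, and apply Theorem~\ref{conditional1} to a polynomial of length $N'=N/q$ at frequency $q\beta$. The paper gives no separate proof; it only asserts that the argument for Corollary~\ref{Manyps} carries over. Your treatment of the square-root term and of $2Q/\sqrt V$ is correct.

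\textbf{The main term.} Your third bullet claims that replacing $Z/N$ by $2/\log N$ costs only $O(Z\log\log N'/\log N')$. That is false. The hypothesis together with Brun--Titchmarsh, $Z\le 2N/(\varphi(q)\log(N/q))$, pins $Z/N$ to $\frac{2}{\varphi(q)\log(N/q)}(1+O(\log\log N'/\log N'))$. This differs from $2/\log N$ by the factor $\varphi(q)\log(N/q)/\log N$, which stays bounded away from $1$ (for large $N$) as soon as $\varphi(q)\ge2$. At $\beta=0$ the displayed conclusion reads $|Z-2N/\log N|\le 2Z\sqrt{(B+C)\log\log N'/\log N'}$, and this is incompatible with that upper bound for $Z$ when $\varphi(q)\ge2$ and $N'$ is large. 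So the normalisation you postpone is not bookkeeping, and no ``equivalence check'' will settle it: the printed main term is a slip. What your argument proves is the inequality with main term $\frac{2}{\varphi(q)\log(N/q)}\int e(\beta t)\,dt$, equivalently $\frac{Z}{N}\int e(\beta t)\,dt$, and you should state explicitly that this is what you establish. There is also a phase issue. Theorem~\ref{conditional1} rests on Lemma~\ref{integraldecompositionBis}, which concerns sequences supported on $[1,N]$. Translating your $m$-interval to start near $0$ therefore introduces the phase $e(M\beta)$, up to a factor $e(O(q\beta))$. The approximant is thus $\frac{Z}{N}\int_M^{M+N}e(\beta t)\,dt$, and $\int_0^N$ is correct only when $M=0$.

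\textbf{The range of $\beta$.} You use $|q\beta|N'=|\beta|N\le(\log N')^B$, but the hypothesis $(N/q)|\beta|\le(\log N')^B$ only gives $|\beta|N\le q(\log N')^B$. Your error term is then $7|q\beta|V\le 7q/(\log N')^{B+1}$, which is not small once $q$ exceeds a power of $\log N'$. Shrinking $V$ by the factor $q$ to compensate forces $\log Q\le\frac12(\log N'-\log q-B\log\log N')$, which adds a loss of order $\sqrt{\log q/\log N'}$ inside the square-root term. So the method reaches $N|\beta|\le\log^B(N/q)$, the natural range for a polynomial of length $N$ and presumably what is meant. It covers the printed range only when $q$ is at most a power of $\log N'$, at the cost of enlarging $B$.

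\textbf{The size condition and the constant.} $N'\gg e^{4B}/B$ does not ensure $2(B+1)\log\log N'<\log N'$, which your square-root bound needs; for any fixed implied constant this fails once $B$ is large. You need $\log N'\gg B\log B$, together with $\log\log N'$ exceeding an absolute constant. Under those conditions the constant is not the obstacle you anticipate: $2\sqrt{B+C}-\sqrt{2B+2+C}\ge 2\sqrt2-\sqrt6>0.37$ for $B\ge2$, which leaves room to absorb the term $2/\sqrt{\log N'}$.
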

%%%%%%%%%%%%%%
Here is another expression of the same phenomenom.
%%%%%%%%%%%%%
\begin{thm}
  \label{Nearby}
  Let $(u_n)_{M<n\le M+N}$ be a sequence of non-negative real numbers
  having support on a consistent multiplicative sequence
  $(\Kcal_q)_{q\le Q}$ that satisfies the Johnsen-Gallagher
  condition. For any parameter $V\le N/2$ and such that $V-\tfrac12$
  is an integer, we have 
  \begin{multline*}
    \int_{-1/2}^{1/2}\biggl|S(\beta)-\frac{S(0)}{N}
    \sum_{-2V< n< N+2V}\mkern-20mu e(n\beta)\biggr|^2
    \biggl|\frac{\sin \pi (2V-1)\beta}{2V\sin\pi\beta}\biggr|^2d\beta
    \\+\frac{|S(0)|^2}{N}\biggl(1-\frac{2V}{N}\biggr)
    \le \sum_{n}u_n^2/(2VL^*(Q,V))
  \end{multline*}
  where $L^*$ is defined in~\eqref{defLstar} and
  $S(\beta)=\sum_n u_n e(n\beta)$.
\end{thm}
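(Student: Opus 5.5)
The plan is to reduce Theorem~\ref{Nearby} to the middle inequality of Theorem~\ref{WeightedSieve}, which I take as given:
\begin{equation*}
\int_{-\infty}^\infty\Bigl|\sum_{|n-t|\le V}u_n\Bigr|^2\frac{dt}{4V^2}\le \sum_n u_n^2/(2VL^*(Q,V)).
\end{equation*}
I would rewrite its left-hand side spectrally and then split off a main term with an orthogonality-type computation.

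\emph{Step 1: discretize the localized sum.} Since $V-\tfrac12=k$ is an integer, for almost every $t$ the set $\{n\in\mathbb{Z}:|n-t|\le V\}$ is a window of consecutive integers centred at the integer $m$ nearest to $t$. Writing $T(m)=\sum_{|n-m|\le k}u_n$, the integral over $t$ becomes $\sum_{m\in\mathbb{Z}}|T(m)|^2$. Since $T$ is the convolution of $(u_n)$ with the indicator of a window, Parseval on $\mathbb{R}/\mathbb{Z}$ gives
\begin{equation*}
\sum_m|T(m)|^2=\int_{-1/2}^{1/2}|S(\beta)|^2|D(\beta)|^2d\beta ,
\end{equation*}
where $D$ is the Fourier transform of the window, a Dirichlet kernel of the form $\sin(\pi\ell\beta)/\sin(\pi\beta)$. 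Dividing by $4V^2$ yields the weight $|K(\beta)|^2=\bigl|\sin\pi(2V-1)\beta/(2V\sin\pi\beta)\bigr|^2$ of the statement. I would first check whether the window has $2V$ or $2V-1$ points, since the statement uses $2V-1$; the boundary convention $|n-t|\le V$ versus $<V$ is exactly what fixes this, and it only affects the outcome up to a harmless inequality.

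\emph{Step 2: expand the square around the main term.} Put $F(\beta)=\sum_{-2V<n<N+2V}e(n\beta)$ and $c=S(0)/N$ (real, since $u_n\ge0$), and expand
\begin{equation*}
\int|S-cF|^2|K|^2=\int|S|^2|K|^2-2c\,\Re\int S\overline{F}|K|^2+c^2\int|F|^2|K|^2 .
\end{equation*}
By Parseval again, $\int S\overline{F}|K|^2=(4V^2)^{-1}\sum_m T(m)W(m)$, where $W(m)$ is the number of integers of $(-2V,N+2V)$ in the window around $m$; here I normalize $M=0$, which is harmless after multiplying $S$ by $e(-M\beta)$. The key observation is that $T(m)\neq0$ forces $m$ to lie within $V$ of $(0,N]$. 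For such $m$ the whole window lies inside $(-2V,N+2V)$, so $W(m)$ equals the full window length $\ell$. Hence
\begin{equation*}
\int S\overline{F}|K|^2=\frac{\ell}{4V^2}\sum_mT(m)=\frac{\ell^2}{4V^2}S(0).
\end{equation*}
Similarly, $\int|F|^2|K|^2=(4V^2)^{-1}\sum_mW(m)^2\le(4V^2)^{-1}\ell^2(N+4V+O(1))$.

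\emph{Step 3: assemble.} Combining the two steps,
\begin{equation*}
\int|S-cF|^2|K|^2+\frac{S(0)^2}{N}\cdot\frac{\ell^2}{4V^2}\Bigl(2-\frac{\sum_mW(m)^2}{N\ell^2}\Bigr)=\int|S|^2|K|^2,
\end{equation*}
and the right-hand side is at most $\sum_n u_n^2/(2VL^*(Q,V))$ by Step~1 and Theorem~\ref{WeightedSieve}. It then remains to show that the bracketed coefficient is at least $1-2V/N$. I expect this bookkeeping to be the main obstacle. The sum $\sum_m W(m)^2$ consists of a plateau contribution $\ell^2(N+2V+O(1))$ and two triangular ramps of total mass about $\tfrac23\ell^3$. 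This gives $2-\sum_m W(m)^2/(N\ell^2)\approx1-2V/N-\tfrac{2\ell}{3N}$, which is slightly too weak if $\ell\approx2V$.

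I would first try to recover the loss by a more careful count of the ramp terms. If that fails, I would try to exploit the precise choice of $2V-1$ in the kernel against the $2V$ (or $2V+1$) points in the window coming from the continuous $t$-integral: this mismatch produces a factor $\ell^2/(4V^2)\ge1$, or the reverse, and it may be what absorbs the deficit. Failing both, I would adjust the length of $F$ so that the ramps are balanced against the plateau. The exact constants must be tracked here rather than estimated.
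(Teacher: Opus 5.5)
Your route is the paper's: the middle inequality of Theorem~\ref{WeightedSieve} (Lemma~\ref{BaseCamp} with the indicator of $[-V,V]$), then Lemma~\ref{Switch} to pass to the Dirichlet-kernel integral, then the expansion of Lemma~\ref{dec2} with $\theta=1/N$. Your Steps~1 and~2 are correct. Your doubt about the kernel is also justified: almost every window $\{n:|n-t|\le V\}$ has $2V=2L+1$ points, so the kernel is $|\sin 2\pi V\beta/\sin\pi\beta|^2$, and the proof of Lemma~\ref{dec2} in fact uses $|\sum_{|\ell|\le L}e(\ell\beta)|^2$.

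The proposal stops before the decisive step, and that step fails. Step~3 needs $\sum_m W(m)^2\le 4V^2(N+2V)$, but your ramp count is exact:
\[ \sum_m W(m)^2=4V^2(N+2V)+\tfrac{2}{3}V(2V-1)(4V-1). \]
So the coefficient of $S(0)^2/N$ is exactly $1-\frac{2V}{N}-\frac{(2V-1)(4V-1)}{6VN}$. The reason is that $cF$ times the kernel is the trapezoid $cW(m)$, not the rectangle of Lemma~\ref{integraldecomposition}. None of your fallbacks helps:
\begin{itemize}
\item a finer count changes nothing, since the count is already exact;
\item the printed $(2V-1)$-kernel makes the cross term $(2V-1)^2S(0)$ instead of $4V^2S(0)$, which puts a factor $(2V-1)^2/(4V^2)<1$ on the main term, the wrong way;
\item $F$ is fixed by the statement, and any interval $F$ with $W(m)=\ell$ on the whole support $[1-L,N+L]$ of $T$ must contain $[1-2L,N+2L]$, so it produces ramps of the same size.
\end{itemize}

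In fact no argument can close this gap, because the inequality as printed is false. Take $Q=1$, so that $g(1)=1$, $L^*(1,V)=1/(2V+2)$, and the right-hand side is $(1+1/V)\sum_n u_n^2$. Take also $M=0$, $N=21$, $V=21/2$ and $u_n=1$ on $[1,21]$. Then the right-hand side is $23$, and the term $\frac{S(0)^2}{N}(1-\frac{2V}{N})$ vanishes. Since $S-F=-\sum_{n\in[-20,0]\cup[22,41]}e(n\beta)$, the integral equals $11921/441\approx 27.0$ with the kernel $\sin 2\pi V\beta$, or $11080/441\approx 25.1$ with the printed $\sin\pi(2V-1)\beta$. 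Both exceed $23$. More generally, for $Q=1$, $u=\1_{[1,N]}$ and the kernel $\sin 2\pi V\beta$, the left-hand side equals $N+\frac{2V}{3}-1+\frac{1}{3V}$, which exceeds $N+N/V$ as soon as $2V^2>3(N+V)$.

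The paper's proof has the same hole. In Lemma~\ref{dec2}, its own bound $(N+4L+1)4V^2$ for the last term, with $\theta=1/N$, gives $1-\frac{4V-1}{N}$, not the claimed $1-\frac{2V}{N}$. What your argument and the paper's actually prove is Theorem~\ref{Nearby} with $1-\frac{2V}{N}$ replaced by $1-\frac{2V}{N}-\frac{(2V-1)(4V-1)}{6VN}$, or by the cruder $1-\frac{4V-1}{N}$. The extra loss is of the same order as the $2V/N$ already present.
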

%%%%%%%%%%%%%
This result gives access to the deviation of $S(\beta)$
from~$\frac{S(0)}{N} \sum_{n} e(n\beta)$  in $L^2$-norm while
Theorem~\ref{conditional1} gives access to single point deviation.
Here is a simpler version aimed at applications.
%%%%%%%%%%%%%
\begin{cor}
  \label{ManypsL20}
  Let $Z$ be the number of points $n$ that belong to every $\Kcal_q$,
  where $(\Kcal_q)_{q\le Q}$ is
  a consistent multiplicative sequence
  that satisfies the Johnsen-Gallagher
  condition. Assume that $L(Q)$ satisfies
  $L(Q)=C (\log Q)^\kappa(1+\Ocal(1/\log Q))$ when $Q\ge2$, for some
  $C>0$ and $\kappa\ge0$.
  For any parameter $A\ge1$, we have 
  \begin{equation*}
    \int_{-(\log N)^A/N}^{(\log N)^A/N}\biggl|S(\beta)-\frac{Z}{N}
    \sum_{n\le N} e(n\beta)\biggr|^2d\beta
    %\\
    \ll_A
    \biggl(\frac{N}{L(\sqrt{N})}-Z
    \biggr)\frac{1}{(\log N)^{\kappa}}+
    \frac{N\log\log N}{(\log N)^{2\kappa+1}}
  \end{equation*}
  where $S(\beta)=\sum_n e(n\beta)$.
\end{cor}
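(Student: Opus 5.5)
We derive Corollary~\ref{ManypsL20} from Theorem~\ref{Nearby}, applied with $u_n=1$ on the sifted set, so that $\sum_n u_n^2=S(0)=Z$. Choose $Q=\sqrt{N}/(\log N)^{D}$ and $V=N/(\log N)^{A+E}$, adjusted so that $V-\tfrac12$ is an integer. Here $D,E$ are large constants depending on $A$ and $\kappa$, fixed at the end.

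\textbf{Step 1: lower bound for the weight.} On $|\beta|\le(\log N)^A/N$ we have $|\beta| V\le(\log N)^{-E}$, so
\begin{equation*}
\biggl|\frac{\sin\pi(2V-1)\beta}{2V\sin\pi\beta}\biggr|^2\ge c>0 .
\end{equation*}
The integral over this range is therefore bounded by a constant times the left-hand integral of Theorem~\ref{Nearby}. Replacing $\sum_{-2V<n<N+2V}e(n\beta)$ by $\sum_{n\le N}e(n\beta)$ costs at most $4V$ pointwise, hence $\ll (Z/N)^2V^2\cdot(\log N)^A/N$ in the integral. This is negligible since $Z\ll N/(\log N)^\kappa$ and $V$ carries an arbitrary log saving.

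\textbf{Step 2: estimating $L^*(Q,V)$.} With these choices $2V+q(q+Q)\le 2V+2Q^2$, and $Q^2=N/(\log N)^{2D}$ is much smaller than $V$ once $2D>A+E$. Hence
\begin{equation*}
2VL^*(Q,V)\ge \frac{L(Q)}{1+Q^2/V}=L(Q)\bigl(1+\Ocal((\log N)^{-1})\bigr).
\end{equation*}
We use $\sum_{q\le Q}g(q)=L(Q)$, which is the standard identity in this setting. The asymptotic hypothesis on $L$ then gives
\begin{equation*}
L(Q)=L(\sqrt N)\Bigl(1-\frac{\kappa D\log\log N}{\log\sqrt N}+\Ocal\bigl(1/\log N\bigr)\Bigr),
\end{equation*}
so that
\begin{equation*}
\frac{Z}{2VL^*}\le \frac{Z}{L(\sqrt N)}\Bigl(1+\Ocal\Bigl(\frac{\log\log N}{\log N}\Bigr)\Bigr).
\end{equation*}

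\textbf{Step 3: combining.} Theorem~\ref{Nearby}, with the extra term $\frac{Z^2}{N}(1-2V/N)$ moved to the right, gives the integral bounded by
\begin{equation*}
\frac{Z}{L(\sqrt N)}-\frac{Z^2}{N}+\Ocal\Bigl(\frac{Z\log\log N}{L(\sqrt N)\log N}+\frac{Z^2V}{N^2}\Bigr).
\end{equation*}
The main difference is
\begin{equation*}
\frac{Z}{N}\Bigl(\frac{N}{L(\sqrt N)}-Z\Bigr).
\end{equation*}
We may assume $Z\le 2N/L(\sqrt N)$, since otherwise the classical large sieve~\eqref{eq:4} gives a contradiction for large $N$. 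Then $Z/N\ll(\log N)^{-\kappa}$, which yields the first term of the statement. The error terms are
\begin{equation*}
\ll \frac{N}{L^2}\cdot\frac{\log\log N}{\log N}\ll\frac{N\log\log N}{(\log N)^{2\kappa+1}},
\end{equation*}
and
\begin{equation*}
\frac{Z^2V}{N^2}\ll \frac{N}{(\log N)^{2\kappa+A+E}},
\end{equation*}
which is smaller.

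\textbf{Main obstacle.} The delicate part is Step~2. We need $Q^2/V$ small while keeping $\log Q$ within a factor $1+\Ocal(\log\log N/\log N)$ of $\log\sqrt N$, and both requirements must be compatible with the constraint from Step~1. Taking $Q$ a fixed power of $\log N$ below $\sqrt N$ achieves this, at the cost of exactly the $\log\log N$ loss that appears in the statement.
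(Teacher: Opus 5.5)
Your proposal is correct and follows the paper's own proof. Both use Theorem~\ref{Nearby} for the indicator of the sifted set, the bound $2VL^*(Q,V)\ge L(Q)\,2V/(2V+2Q^2)$, shortening the $n$-sum to $n\le N$ at negligible cost, the a priori bound $Z\ll N/(\log N)^\kappa$, and the $\log\log N/\log N$ loss from passing from $L(Q)$ to $L(\sqrt N)$. As in the paper, turning $\frac{Z}{N}\bigl(\frac{N}{L(\sqrt N)}-Z\bigr)$ into the first term of the statement tacitly needs $Z\le N/L(\sqrt N)$; otherwise that term should be read as its positive part. Your extra saving $(\log N)^{-E}$ in $V$ is a needed repair rather than a cosmetic change: with the paper's $V\approx N/(\log N)^A$, the kernel $\sin\pi(2V-1)\beta/(2V\sin\pi\beta)$ vanishes at $\beta\approx 1/(2V)$, which lies inside $|\beta|\le(\log N)^A/N$, so the weight cannot be bounded below on the whole range as your Step~1 requires.
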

%%%%%%%%%%%%%
The condition on $L(Q)$ is usually replaced by saying
that ``the sieve is of dimension $\kappa$'', by which we mean that
we have
\begin{equation}
  \label{eq:19}
  %\left\{
  %    \begin{aligned}
        \sum_{p^k\le P}g(p^k)\log(p^k)=\kappa\log P+\Ocal(1),\quad
        \sum_{p, k,\nu\ge2}g(p^k)g(p^\nu)\log(p^k)\le A.
   %   \end{aligned}
   % \right.
\end{equation}
Under such hypotheses, we have, when $Q\ge2$,
\begin{equation*}
  L(Q)=\sum_{q\le Q}g(q)=C(\log Q)^\kappa(1+\Ocal(1/\log Q))
\end{equation*}
for some positive constant~$C$ that can be fully described.
This result can be found in Theorem~21.1
of~\cite{Ramare*06}, but, when $g(p^k)=0$ as soon as $k\ge2$, this
result is much older and can for instance be found for instance
in~\cite{Halberstam-Richert*71} by H. Halberstam \& {H.E.} Richert.

%%%%%%%%%%%%%% 
\subsection*{A problem in Harmonic Analysis}
%%%%%%%%%%%%%%
The reader will find in Subsection~\ref{HA} a problem of Harmonic
Analysis on which we stumbled and which has some independent interest.
%%%%%%%%%%%%%% 
\subsection*{Acknowledgement}
%%%%%%%%%%%%%%
The present paper was started when the author was enjoying the
hospitality of the Anhui university in China, and completed when the
author was enjoying the hospitality of the International Centre for
Theoretical Sciences (ICTS) in Bengaluru, India during the program - The
Classical Circle Method and the Large Sieve 2026 (code:
ICTS/CCMLSS2026/05).
Most of the computations have been run by Pari/GP~\cite{PARI-GP} and,
for the figures, either the data in \texttt{svg}-format have been
piped to Inskscape~\cite{Inkscape} for 
functions, or to Sage~\cite{sagemath} to obtain the plots.

%%%%%%%%%%%%%%%%%%%%%%%%%%%%%%%%%%%%%%%
\section{A local lower bound}
%%%%%%%%%%%%%%%%%%%%%%%%%%%%%%%%%%%%%%%
This section is devoted to proving the next lower bound. This theorem
is new only in its generality. See for instance the
proof of Theorem~6 in the book~\cite{Bombieri*74} by E.~Bombieri or
Eq.\,(8.8) in the book~\cite{Huxley*72-2} by M.\,N.~Huxley.
The proof we give may have further
consequences, in particular Identity~\eqref{L1S} which is an important
ingredient in the proof of Theorem~1.4 in~\cite{Ramare*25-1}. 

%%%%%%%%%%%%%%%%%%%%% 
\begin{thm}
  \label{L2S}
  \label{L2LB}
  When $(u_n)$ has support on the consistent and multiplicative
  sequence~$(\Kcal_d)_{d|q}$ which satisfies the Johnsen-Gallagher
  condition, and with $S(\alpha)=\sum_n
  u_ne(n\alpha)$, we have
  \begin{equation*}
  \label{eq:31}
  \sum_{a\mode q}|S(a/q)|^2
  \ge
  g(q)
  |S(0)|^2.
\end{equation*}
\end{thm}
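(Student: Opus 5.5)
We first rewrite the left-hand side in terms of residue-class sums and then apply Cauchy--Schwarz twice.

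\emph{Step 1: class sums.} For every $d\mid q$ and $b\in\Z{d}$ put
\begin{equation*}
  U_d(b)=\sum_{n\equiv b[d]}u_n .
\end{equation*}
Since $(u_n)$ has support on $(\Kcal_d)_{d|q}$, we have $U_d(b)=0$ unless $b\in\Kcal_d$. Sorting all fractions $a/q$ with $a$ modulo~$q$ according to their reduced denominator gives the identity
\begin{equation*}
  \sum_{d|q}\sum_{a\mode d}|S(a/d)|^2=\sum_{a\mod q}|S(a/q)|^2=q\sum_{b\mod q}|U_q(b)|^2 ,
\end{equation*}
and by M\"obius inversion we obtain
\begin{equation*}
  \sum_{a\mode q}|S(a/q)|^2=\sum_{d|q}\mu(q/d)\,d\sum_{b\mod d}|U_d(b)|^2 .
\end{equation*}
Because of the signs, this identity cannot be estimated term by term, so we look for a positivity argument instead.

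\emph{Step 2: the comparison sequence.} We compare $U_q$ with the "expected" sequences coming from smaller moduli. For $d\mid q$, the Johnsen-Gallagher condition says that every $y\in\Kcal_d$ has exactly $|\Kcal_q|/|\Kcal_d|$ lifts in $\Kcal_q$. Define $W_d(x)=U_d(x\mod d)\,|\Kcal_d|/|\Kcal_q|$ for $x\in\Kcal_q$. Then $W_d$ is supported on $\Kcal_q$ and has the same class sums modulo~$d$ as $U_q$.

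Consider the operator $\Pi_d$ on functions supported on $\Kcal_q$ that averages over fibres above $\Kcal_d$. Consistency of $(\Kcal_d)$ makes these fibres compatible, and multiplicativity makes $\Pi_{d}\Pi_{d'}=\Pi_{(d,d')}$. The $\Pi_d$ are therefore commuting orthogonal projections in $\ell^2(\Kcal_q)$, and $\Pi_dU_q=W_d$.

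\emph{Step 3: the square-free case.} Assume first that $q$ is square-free. Let $P=\prod_{p|q}(I-\Pi_p)$, an orthogonal projection. Expanding $\|PU_q\|^2$ and using $\|\Pi_dU_q\|^2=\frac{|\Kcal_d|}{|\Kcal_q|}\sum_b|U_d(b)|^2$, one checks that $\sum_{a\mode q}|S(a/q)|^2$ is exactly $\frac{q|\Kcal_q|}{|\Kcal_q|}$-type weighted combinations of these norms. More precisely, I expect
\begin{equation*}
  \sum_{a\mode q}|S(a/q)|^2\;\ge\; q\sum_{x}|(PU_q)(x)|^2\cdot(\text{local factor}),
\end{equation*}
and the whole point is to make the local factor explicit.

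This identification is the step I expect to be the main obstacle. It is cleanest to do it one prime at a time. By multiplicativity, $\Kcal_q=\Kcal_p\times\Kcal_{q/p}$, so the sum over $a\mode q$ factors through characters of $\Z{p}$ and the inequality can be built by induction on the number of prime factors. At a single prime $p$ the claim reduces to a one-variable statement: for $f$ supported on $\Kcal_p\subset\Z{p}$,
\begin{equation*}
  \sum_{a\neq0}|\hat f(a/p)|^2=p\|f\|^2-|\hat f(0)|^2\ge \Bigl(\frac{p}{|\Kcal_p|}-1\Bigr)|\hat f(0)|^2 .
\end{equation*}
This follows from the Cauchy--Schwarz bound $|\hat f(0)|^2\le|\Kcal_p|\,\|f\|^2$, and the constant $\frac{p}{|\Kcal_p|}-1$ is exactly $g(p)$.

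The inductive step applies this bound fibrewise. Working in the $\Z{p}$ variable with the $q/p$-part frozen expresses the left-hand side as a sum over $a'\mode q/p$ of a quantity bounded below by $g(p)\,|S_{a'}(0)|^2$, where $S_{a'}$ is an exponential sum for modulus $q/p$ supported on $\Kcal_{q/p}$ (the Johnsen-Gallagher condition guarantees this support). Applying induction to $S_{a'}$ then gives $g(p)\,g(q/p)\,|S(0)|^2$.

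\emph{Step 4: prime powers.} For a prime power $p^\alpha$ the same scheme works, but with the identity
\begin{equation*}
  \sum_{a\mode p^\alpha}|\cdot|^2=\sum_{a\mod p^\alpha}-\sum_{a\mod p^{\alpha-1}} .
\end{equation*}
Cauchy--Schwarz is now applied within the fibres of $\Kcal_{p^\alpha}\to\Kcal_{p^{\alpha-1}}$, each of which has equal size by the Johnsen-Gallagher condition. This produces the factor $\frac{p^\alpha}{|\Kcal_{p^\alpha}|}-\frac{p^{\alpha-1}}{|\Kcal_{p^{\alpha-1}}|}=g(p^\alpha)$ and completes the proof of Theorem~\ref{L2S}.
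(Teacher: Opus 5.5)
Your argument is correct in substance, and once trimmed it is a genuinely different proof from the paper's. Two ingredients carry it.

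(i) A local bound at each $p^\alpha\| q$. Put $U_d(b)=\sum_{n\equiv b \bmod d}u_n$. Parseval gives $\sum_{a\mode p^\alpha}|S(a/p^\alpha)|^2=p^\alpha\sum_b|U_{p^\alpha}(b)|^2-p^{\alpha-1}\sum_c|U_{p^{\alpha-1}}(c)|^2$. The fibres of $\Kcal_{p^\alpha}\to\Kcal_{p^{\alpha-1}}$ all have the same size $|\Kcal_{p^\alpha}|/|\Kcal_{p^{\alpha-1}}|\le p$ by the Johnsen--Gallagher condition. Cauchy--Schwarz inside each fibre bounds the right-hand side below by $|\Kcal_{p^{\alpha-1}}|\,g(p^\alpha)\sum_c|U_{p^{\alpha-1}}(c)|^2$. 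Since $g(p^\alpha)\ge0$, a second Cauchy--Schwarz over $\Kcal_{p^{\alpha-1}}$ then gives $g(p^\alpha)|S(0)|^2$. You should write this second step out; the sentence ``this produces the factor $g(p^\alpha)$'' skips it.

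(ii) The Chinese-remainder induction. For $q=p^\alpha m$ with $p\nmid m$, write $a/q\equiv a_1/p^\alpha+a_2/m$ modulo $1$. Apply (i) to the twisted sequence $u_ne(na_2/m)$, which has the same support, to get $\sum_{a\mode q}|S(a/q)|^2\ge g(p^\alpha)\sum_{a_2\mode m}|S(a_2/m)|^2$. Then apply the induction hypothesis to $S$ itself at modulus $m$.

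Three corrections to your write-up follow from this. Your $S_{a'}(0)$ is just $S(a'/(q/p))$. The support that matters is on $\Kcal_{p^\alpha}$, and the Johnsen--Gallagher condition plays no role in that support claim. For non-square-free $q$ you must split off a full prime power, not a single prime. Finally, Step~2, the projection $P$, and the display after ``I expect'' are never used and never proved, so delete them.

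The paper instead treats $q$ in one stroke. It uses the explicit dual vector $\hat\psi^*_q(a)=|\Kcal_q|^{-1}\sum_{c\in\Kcal_q}e(ac/q)$ and proves two facts:
\begin{itemize}
\item the exact identity $\sum_{a\mode q}\overline{\hat\psi^*_q(a)}S(a/q)=g(q)S(0)$, which holds because $\psi^*_q=g(q)$ on $\Kcal_q$;
\item the norm computation $\sum_{a\mode q}|\hat\psi^*_q(a)|^2=g(q)$, via Ramanujan sums and the Johnsen--Gallagher condition at every $d\mid q$.
\end{itemize}
A single Cauchy--Schwarz then finishes. This route yields the identity \eqref{L1S}, which is reused elsewhere, and it identifies the extremal configuration ($S(a/q)$ proportional to $\hat\psi^*_q(a)$).

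Your route is the classical induction behind Montgomery's sieve lemma, extended to prime powers. It is more elementary, needs no Fourier computation of the local model, and uses weaker hypotheses. You need only the support condition and equal fibre sizes for $\Kcal_{p^\alpha}\to\Kcal_{p^{\alpha-1}}$ at each $p^\alpha\| q$. You never use the multiplicativity of $\Kcal_q$ beyond the product definition of $g$. The cost is that it delivers only the inequality, not the underlying identity.
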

%%%%%%%%%%%%%%%%%%%%%%
We provide two proofs of this result; the first one is restricted
to the case of the primes but its sheer simplicity should be enlightening
to the readers. Notice that this inequality may be expected to be
optimal. It is so at least in the case of the primes from the initial
segment (i.e. from $[1,N]$) and when $q$ is not too large. See
Theorems~3 and~4 of \cite{Liu-Zhan*97} by Liu, Jianya \& Zhan, Tao
again concerning the case of the primes from the initial segment but
for moduli~$q$ roughly up to $N^{1/3}$.
We propose Figures~\ref{test1png} and~\ref{test2png} on the
behaviour when the modulus~$q$ becomes of the size~$\sqrt{N}$.%
% %%%%%%%%%%%%
% \begin{figure}[!h]
%   \centering%
%   \hspace*{-7pt}%%
%   \subfloat{\includegraphics[width=.49\linewidth]{../TestingL2B-2-1000000.png}}%%
%   \qquad
%   \subfloat{\includegraphics[width=.49\linewidth]{../TestingL2B-2-10000000.png}}%%
%   \hspace*{-10pt}%
%   \caption{On the optimality of Theorem~\ref{L2S} for the primes}%
%   \label{test1png}%
% \end{figure}
% %%%%%%%%%%%%%
%%%%%%%%%%%%
\begin{figure}[!h]
  \includegraphics[scale=0.6]{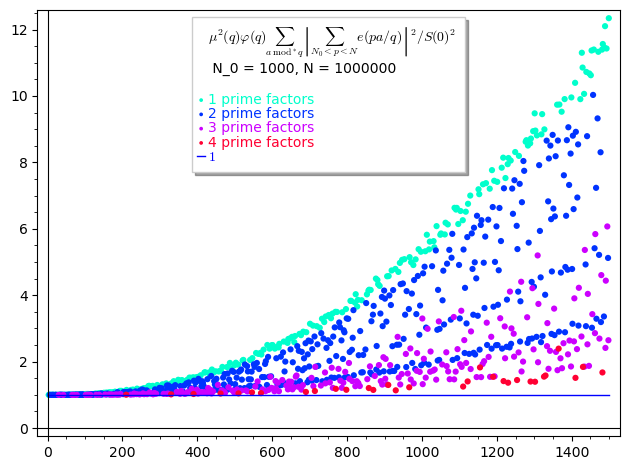}%%
  \caption{On the optimality of Theorem~\ref{L2S} for the primes}%
  \label{test1png}%
\end{figure}
%%%%%%%%%%%%%
%%%%%%%%%%%%
\begin{figure}[!h]
  \includegraphics[scale=0.6]{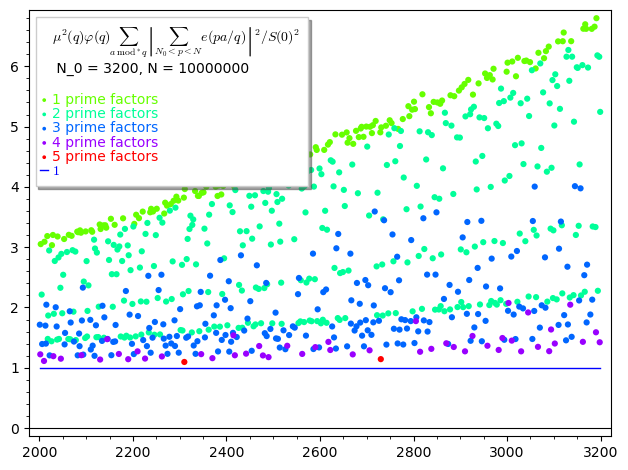}%%
  \caption{On the optimality of Theorem~\ref{L2S} for the primes}%
  \label{test2png}%
\end{figure}
%%%%%%%%%%%%%
We should however warn the readers against too fast deductions from
these plots: in many situations, the behaviour of the small primes
does not reflect the generic situation. Some regular patterns seem
nonetheless to appear, including some accumulation
directions. Furthermore the less prime factors has the modulus~$q$,
the larger the sum we consider. This
phenomenom will appear more strongly in Figures~\ref{test1twinpng}
and~\ref{test2twinpng} displayed at the end of this paper and that
concern the prime twin case.
%%%%%%%%%% 
\begin{proof}[Proof of Theorem~\ref{L2S} for the primes]
  Let us recall that the Ramanujan sum
  \begin{equation*}
    c_q(n)=\sum_{a\mode q}e(na/q)
  \end{equation*}
  is such that $c_q(n)=\mu(q)$ when $(n,q)=\gcd(n,q)=1$. Therefore
  \begin{equation*}
    \sum_{a\mode q}S(a/q)
    =\sum_{n}u_n\sum_{a\mode q}e(na/q)
    =\sum_{n}u_n\,\mu(q)=\mu(q)S(0)
  \end{equation*}
  by our hypothesis on $(u_n)$.
  Using Cauchy's inequality on this identity leads to
  \begin{equation*}
    |S(0)|^2\mu^2(q)\le \varphi(q)\sum_{a\mode q}|S(a/q)|^2
  \end{equation*}
  and the claimed inequality is proved since, in this case, we have
  $g(q)=\mu^2(q)/\varphi(q)$. 
\end{proof}
%%%%%%%%%%

%%%%%%%%%%%% 
\begin{proof}[Proof of Theorem~\ref{L2S} in general]
Let us define, as per Eq.\,(11.13) from~\cite{Ramare*06}, the function:
\begin{equation}
  \label{defpsistarq}
  \psi^*_q(n)=\sum_{\substack{\delta|q\\ n\in\Kcal_\delta}}\mu(q/\delta)
  \frac{\delta}{|\Kcal_\delta|}
\end{equation}
so that
\begin{equation}
  \label{eq:27}
  \sum_{\delta|q}\psi^*_\delta(n)=\psi_q(n)=\frac{q}{|\Kcal_q|}\1_{\Kcal_q}(n).
\end{equation}
The function $\psi_q$ is the \emph{local model modulo~$q$} for our
sequence and $\psi^*_q$ is its ``new'' part.
Notice the following important property:
%%%%%%%%%%%
\begin{center}
  \sl When $n\in\Kcal_q$, we have
  $\displaystyle\psi^*_q(n)=
  %(-1)^{\omega(q)}
  g(q)$.
\end{center}
%%%%%%%%%%% 
Let us compute the Fourier decomposition modulo~$q$ of $\psi_q$. We
find that
\begin{align*}
  \psi_q(n)
  &=
    \frac{1}{q}\sum_{\delta|q}\frac{q}{|\Kcal_q|}\sum_{a\mode \delta}
    \sum_{b\in\Kcal_q}e(ab/\delta)e(-na/\delta)
  \\&=
    \sum_{\delta|q}\frac{1}{|\Kcal_\delta|}\sum_{a\mode \delta}
    \sum_{c\in\Kcal_\delta}e(ac/\delta)e(-na/\delta).
\end{align*}
By identification, or by using the Moebius inversion formula, we infer
that
\begin{equation}
  \label{eq:28}
  \psi^*_q(n)
  =
  \sum_{a\mode q}
  \hat{\psi}^*_q(a)e(-na/q)
  \quad\text{where}\quad
  \hat{\psi}^*_q(a)
  =\frac{1}{|\Kcal_q|}
  \sum_{c\in\Kcal_q}e(ac/q).
\end{equation}
This quantity computation may also be found in the paper~\cite{Kobayashi*73} by
I.\,Kobayashi (this is~$b_{q,a}$ therein).
Consequently,  we find that 
  \begin{equation}
    \label{L1S}
    \sum_{a\mode q}\overline{\hat{\psi}^*_q(a)}S(a/q)
    =
    g(q)
    S(0).
  \end{equation}
On reading carefully Chapter~8, and precisely Eq\,(8.4) therein, of
the book~\cite{Huxley*72-2} by M.\,N.~Huxley, the reader will discover
the very same (fundamental) identity.

We compute furthermore:
\begin{align*}
  \sum_{a\mode q}|\hat{\psi}^*_q(a)|^2
  &=
  \frac{1}{|\Kcal_q|^2}\sum_{b_1,b_2\in\Kcal_q}c_q(b_1-b_2)  
  =
  \frac{1}{|\Kcal_q|^2}\sum_{b_1,b_2\in\Kcal_q}\sum_{\substack{d|b_1-b_2\\
  d|q}}d\mu(q/d)
  \\&=
  \frac{1}{|\Kcal_q|^2}\sum_{d|q}d\mu(q/d)\frac{|\Kcal_q|^2}{|\Kcal_d|^2}|\Kcal_d|
  =g(q).
\end{align*}
Thus, using Cauchy's inequality on~\eqref{L1S} and on using the above
$L^2$-norm computation, we reach Theorem~\ref{L2S}.
\end{proof}

%%%%%%%%%%%%%%%%%%%%%%%%%%%%%%%%%%%%%%% 
\section{A large sieve inequality alternative}
%%%%%%%%%%%%%%%%%%%%%%%%%%%%%%%%%%%%%%%
The large sieve inequality for the Farey sequence states that
\begin{equation}
  \label{LSFarey}
  \sum_{q\le Q}\sum_{a\mode q}|S(a/q)|^2\le \sum_n |u_n|^2(N+Q^2).
\end{equation}
This inequality joined with Lemma~\ref{L2LB} forms the gives the
bedrock of a proof of the
Brun-Titchmarsh inequality.
Let us start with a consequence of the Parseval equality that has a
similar flavour.
%%%%%%%%%%%%
\begin{lem}
  \label{Step2b}
  We have
  \begin{equation*}
    \sum_{q\le Q}\sum_{a\mode q}
    \int_{-\frac{1}{q(q+Q)}}^{\frac{1}{q(q+Q)}}\biggl|
    S\biggl(\frac{a}{q}+\beta\biggr)
    \biggr|^2d\beta
    \le \sum_{n\le N}|u_n|^2,
  \end{equation*}
   the parameter $Q\ge1$ being real number.
%  When $(u_n)$ is carried by integers prime to $P(Q+1)$, we have
\end{lem}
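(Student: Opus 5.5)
The plan is to show that the arcs
\begin{equation*}
  I_{a,q}=\Bigl[\frac aq-\frac{1}{q(q+Q)},\ \frac aq+\frac{1}{q(q+Q)}\Bigr],
  \qquad q\le Q,\ a\mode q,
\end{equation*}
are pairwise disjoint modulo~$1$ (up to endpoints). Once that is done, the left-hand side is the integral of $|S|^2$ over a subset of $\mathbb{R}/\mathbb{Z}$. It is therefore at most $\int_0^1|S(\alpha)|^2d\alpha$, which equals $\sum_n|u_n|^2$ by the Parseval identity. All the content lies in the disjointness, which is a Farey-dissection statement.

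\textbf{Step 1: distance between two fractions.} Take two distinct fractions $a/q$ and $a'/q'$ with $q,q'\le Q$, in lowest terms, considered modulo~$1$. Since $aq'-a'q$ is a nonzero integer, we have $\|a/q-a'/q'\|\ge 1/(qq')$. For the arcs to be disjoint it suffices that
\begin{equation*}
  \frac{1}{q(q+Q)}+\frac{1}{q'(q'+Q)}\le\frac{1}{qq'}.
\end{equation*}
Multiplying by $qq'$, this becomes
\begin{equation*}
  \frac{q'}{q+Q}+\frac{q}{q'+Q}\le 1.
\end{equation*}

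\textbf{Step 2: the elementary inequality.} Set $x=q$ and $y=q'$, both in $[1,Q]$. I need
\begin{equation*}
  y(y+Q)+x(x+Q)\le (x+Q)(y+Q),
\end{equation*}
which simplifies to $x^2+y^2\le xy+Q^2$. The function $x^2-xy+y^2$ is convex in each variable, so its maximum on $[0,Q]^2$ is attained at a corner. The corner values are $0$, $Q^2$, $Q^2$ and $Q^2$, so the maximum is $Q^2$ and the inequality holds, with equality only in boundary cases. Equality only makes the arcs touch at a point, which has measure zero.

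\textbf{Step 3: conclusion.} The case $q=q'$, $a\neq a'$ is already covered by Step~1, since then $|a-a'|/q\ge 1/q\ge 1/q^2$. I also check that each single arc has length $2/(q(q+Q))\le 1$, so no arc overlaps itself modulo~$1$. Since $S$ has period~$1$, I translate each arc into $\mathbb{R}/\mathbb{Z}$ and sum:
\begin{equation*}
  \sum_{q\le Q}\sum_{a\mode q}\int_{I_{a,q}}|S(\alpha)|^2\,d\alpha
  \le \int_0^1|S(\alpha)|^2\,d\alpha
  =\sum_n|u_n|^2.
\end{equation*}

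The only genuine obstacle is the disjointness in Step~2. This is where the precise shape of the radius $1/(q(q+Q))$ matters, and it is essentially the classical mediant-spacing fact for the Farey sequence of order~$Q$. The direct corner argument should dispose of it cleanly.
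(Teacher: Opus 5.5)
Your proposal is correct and follows the paper's own argument: Parseval on $\mathbb{R}/\mathbb{Z}$, combined with the disjointness of the arcs, which comes from the spacing $\|a/q-a'/q'\|\ge 1/(qq')$. The only difference is cosmetic: the paper checks $\frac{1}{q(q+Q)}+\frac{1}{q'(q'+Q)}\le\frac{1}{qq'}$ by using $Q\ge q'$ and $Q\ge q$ to reduce it to the mediant identity $\frac{1}{q(q+q')}+\frac{1}{q'(q+q')}=\frac{1}{qq'}$, which is a little quicker than your corner argument for $x^2-xy+y^2\le Q^2$.
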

%%%%%%%%%%%%
This inequality is immediate for anyone used to the Circle
Method.
Notice the absence of the factor $N+Q^2$ that appears
in~\eqref{LSFarey}. The price to pay is to have an integral
around~$a/q$. As a comparaison, let us mention the inequality
\begin{equation}
    \sum_{q\le Q}\sum_{a\mode q}
    \sum_{\substack{\ell{}\in\mathbb{Z}\\ 2q|\ell{}|\le Q}}
    \biggl|
    S\biggl(\frac{a}{q}+\frac{\ell{}}{Q^2}\biggr)
    \biggr|^2
    \le \sum_{n\le N}|u_n|^2(N+Q^2).
  \end{equation}
proved in Theorem~1.8 of~\cite{Ramana-Ramare*25}.
%%%%%%%%%%%
\begin{proof}
  We use the Parseval identity together with the subsets
  \begin{equation}
    \label{eq:3}
    I_Q(a/q)=\frac{a}{q}+\biggl[\frac{-1}{q(q+Q)},\frac{1}{q(q+Q)}\biggr],
    q\le Q.
  \end{equation}
  These subsets are disjoint as, with an obvious notation,
  \begin{align*}
    \biggl|\frac{a}{q}+\beta-\frac{\tilde a}{\tilde q}-\tilde \beta\biggr|
    &> \frac{|a\tilde q-\tilde aq|}{q\tilde
      q}-\frac{1}{q(q+Q)}-\frac{1}{\tilde q(\tilde q+Q)}
    \\&> \frac{1}{q\tilde
      q}-\frac{1}{q(q+\tilde q)}-\frac{1}{\tilde q(\tilde q+q)}=0.
  \end{align*}
  The Kloosterman arcs decomposition would also leads to a proof of our lemma.
\end{proof}
%%%%%%%%%%%

%%%%%%%%%%%%%%%%%%%%%%%%%%%%%%%%%%%%%%% 
\section{Norm decomposition}
%%%%%%%%%%%%%%%%%%%%%%%%%%%%%%%%%%%%%%%
We prove in this preliminary section a lemma that will be used later on.

%%%%%%%%%%%
\begin{lem}
  \label{integraldecomposition}
  When the function $\myalpha$ is Riemann-integrable and supported on $[-V,V]$,
  and $(u_m)_{m\le N}$ are complex numbers, we have 
  \begin{multline*}
    \int_{-\infty}^\infty
    \Bigl|\sum_{m}u_m\myalpha(t-m)\Bigr|^2
    dt
    =
    \frac{|\Sigma|^2}{N}
    \biggl(1-\frac{2V}{N}\biggr)\Bigr|\sum_m u_m\Bigl|^2
    \\+\int_{-V}^{N+V}
    \biggl|\sum_{m}u_m\myalpha(t-m)-\frac{\Sigma\sum_m u_m}{N}\biggr|^2
    dt.
  \end{multline*}
  where $\Sigma=\int_{-V}^V\myalpha(t)dt$.
\end{lem}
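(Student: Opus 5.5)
The plan is to expand the square in the last integral and compare it with $\int|F|^2$, where
\begin{equation*}
  F(t)=\sum_m u_m\myalpha(t-m),\qquad U=\sum_m u_m,\qquad c=\frac{\Sigma U}{N}.
\end{equation*}
This is the usual orthogonal decomposition of a function into its mean on an interval plus the deviation from that mean. The only small discrepancy is that the interval has length $N+2V$ rather than $N$, and this produces the factor $1-2V/N$.

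\textbf{Step 1 (support of $F$).} I take the indices $m$ to lie in $[0,N]$; in the application they lie in $(M,M+N]$, and after translating I may assume they lie in $(0,N]$. Since $\myalpha$ is supported on $[-V,V]$, each term $u_m\myalpha(t-m)$ vanishes outside $[m-V,m+V]\subset[-V,N+V]$. Hence $F$ is supported in $J=[-V,N+V]$, an interval of length $N+2V$. Consequently
\begin{equation*}
  \int_{-\infty}^\infty|F(t)|^2dt=\int_J|F(t)|^2dt .
\end{equation*}
Riemann-integrability of $\myalpha$ ensures that all the integrals below make sense, since $F$ is a finite sum of translates of $\myalpha$.

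\textbf{Step 2 (integral of $F$).} By translation invariance,
\begin{equation*}
  \int_J F(t)\,dt=\sum_m u_m\int_{-\infty}^{\infty}\myalpha(t-m)\,dt=\Sigma\,U .
\end{equation*}

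\textbf{Step 3 (expanding the square).} I expand
\begin{equation*}
  \int_J|F-c|^2=\int_J|F|^2-2\,\Re\Bigl(\bar c\int_J F\Bigr)+|c|^2(N+2V).
\end{equation*}
Substituting $\int_J F=\Sigma U$ and $c=\Sigma U/N$, the middle term is $2|\Sigma U|^2/N$ and the last term is $|\Sigma U|^2(N+2V)/N^2$. Their combination is
\begin{equation*}
  -\frac{2|\Sigma U|^2}{N}+\frac{|\Sigma U|^2(N+2V)}{N^2}
  =-\frac{|\Sigma|^2|U|^2}{N}\Bigl(1-\frac{2V}{N}\Bigr).
\end{equation*}
Rearranging gives exactly the stated identity:
\begin{equation*}
  \int_{-\infty}^\infty|F|^2=\frac{|\Sigma|^2}{N}\Bigl(1-\frac{2V}{N}\Bigr)|U|^2+\int_J|F-c|^2 .
\end{equation*}

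There is no real obstacle here. The one point needing care is Step 1: one must make sure that the support of $F$ lies inside $[-V,N+V]$, so that restricting the integral of $|F|^2$ to that interval loses nothing. This depends on the indexing convention for $m$, and I would state that convention explicitly, translating the sequence if necessary.
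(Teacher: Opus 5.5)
Your proof is correct and is essentially the paper's argument: compute $\int_{-V}^{N+V}F=\Sigma\sum_m u_m$, expand the square over $[-V,N+V]$, and use that $F$ vanishes outside this interval, so that $\int_{-V}^{N+V}|F|^2=\int_{-\infty}^{\infty}|F|^2$. Your explicit remark that this requires the indices $m$ to lie in $[0,N]$ makes precise a point the paper uses only tacitly.
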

%%%%%%%%%%%

%%%%%%%%%%%
\begin{proof}
  Let us set $X=\sum_m u_m$.
  We first find that
  \begin{align*}
    \int_{-V}^{N+V}
    \sum_{m}u_m\myalpha(t-m)dt
    &=
      \sum_m u_m
      \int_{-V}^{N+V}
      \myalpha(t-m)dt=\Sigma X.
  \end{align*}
  Therefore
  \begin{multline*}
    \int_{-V}^{N+V}
    \biggl|\sum_{m}u_m\myalpha(t-m)-\frac{\Sigma X}{N}\biggr|^2
    dt
    \\=
      \int_{-V}^{N+V}
    \biggl|\sum_{m\le V}u_m\myalpha(t-m)\biggr|^2
    dt
      -\frac{2\overline{\Sigma}}{N}\Re \overline{X}\int_{-V}^{N+V}
    \sum_{m}u_m\myalpha(t-m)dt
    +
      \frac{|\Sigma|^2(N+2V)}{N^2}|X|^2
    \\=
    \int_{-V}^{N+V}
    \biggl|\sum_{m}u_m\myalpha(t-m)\biggr|^2
    dt-\frac{|\Sigma|^2}{N}|X|^2
    +\frac{2V|\Sigma|^2|X|^2}{N^2}.
  \end{multline*}
  The lemma follows readily.
\end{proof}
%%%%%%%%%%%

%%%%%%%%%%%%%%%
\begin{lem}
  \label{Switch}
  Let $(u_m)$ be a finite complex sequence which we extend by~0 when
  the index lies outside the initial range of definition. 
  When $V-\tfrac12$ is an integer, we have
  \begin{equation*}
    \int_{-\infty}^\infty
    \Bigl|\sum_{m}u_m\myalpha_0(t-m)\Bigr|^2
    dt
    =
    \int_{-1/2}^{1/2}|S(\beta)|^2
    \biggl|\frac{\sin \pi (2V-1)\beta}{\sin\pi\beta}\biggr|^2d\beta.
  \end{equation*}
\end{lem}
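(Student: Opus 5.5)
The plan is to exploit that, when $\myalpha_0$ is the indicator function of a centred interval, the function $t\mapsto\sum_m u_m\myalpha_0(t-m)$ is a step function, constant on intervals of length one. Its $L^2$-norm on $\mathbb{R}$ is then a plain sum of squares of short window sums of $(u_m)$. That sum is the squared $\ell^2$-norm of a discrete convolution, to which the Parseval identity on $\mathbb{R}/\mathbb{Z}$ applies.

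I take $\myalpha_0$ to be the indicator of $[-(V-\tfrac12),V-\tfrac12]$, as its definition in the paper should specify. The factor $\sin\pi(2V-1)\beta$ matches a window of $2V-1$ integers, and that is what this interval produces. Write $h=V-\tfrac12$, which is an integer by hypothesis, so the interval is $[-h,h]$ and has length $2h=2V-1$. If the paper's $\myalpha_0$ is instead a shifted or half-open version, only the bookkeeping of endpoints changes.

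\textbf{Step 1.} Fix $k\in\mathbb{Z}$ and $t\in(k,k+1)$. The integers $m$ with $|t-m|\le h$ are exactly $k-h+1,\dots,k+h$, since $t\pm h$ are never integers there. Hence, outside the null set $\mathbb{Z}$,
\begin{equation*}
  \sum_m u_m\myalpha_0(t-m)=\sum_{k-h<m\le k+h}u_m=:W_k,
\end{equation*}
and so
\begin{equation*}
  \int_{-\infty}^\infty\Bigl|\sum_m u_m\myalpha_0(t-m)\Bigr|^2dt=\sum_{k\in\mathbb{Z}}|W_k|^2 .
\end{equation*}
This sum is finite because $(u_m)$ is finitely supported.

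\textbf{Step 2.} The sequence $(W_k)$ is the convolution of $(u_m)$ with the indicator of the $2h$ consecutive integers $\{-h+1,\dots,h\}$, up to the orientation of the shift. Its generating trigonometric polynomial is therefore
\begin{equation*}
  \sum_k W_k e(k\beta)=S(\beta)\sum_{j=-h}^{h-1}e(j\beta).
\end{equation*}
Parseval on $[-1/2,1/2]$ then gives
\begin{equation*}
  \sum_k|W_k|^2=\int_{-1/2}^{1/2}|S(\beta)|^2\Bigl|\sum_{j=-h}^{h-1}e(j\beta)\Bigr|^2d\beta .
\end{equation*}

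\textbf{Step 3.} Sum the geometric series. The modulus of $\sum_{j=-h}^{h-1}e(j\beta)$ equals $|e(2h\beta)-1|/|e(\beta)-1|=|\sin(2\pi h\beta)/\sin\pi\beta|$, and $2h=2V-1$. This is exactly the kernel in the statement.

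The only delicate point is Step 1: getting the window length and the boundary conventions right so that the kernel comes out as $\sin\pi(2V-1)\beta$ rather than $\sin 2\pi V\beta$. This is where the parity hypothesis on $V$ is used. Everything else is Parseval and a geometric sum.
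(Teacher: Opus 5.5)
Your computation is internally correct, but it proves the identity for a function that is not the paper's $\myalpha_0$. The difference is not endpoint bookkeeping. The paper sets $\myalpha_0(t)=\1_{|t|\le V}$ (Lemma~\ref{fora0}). The proof of this very lemma uses $\hat{\myalpha}_0(x)=\sin 2\pi Vx/(\pi x)$, and \eqref{eq:10} records $A(\myalpha_0)=2V$, $B(\myalpha_0)=4V^2$. It is the \emph{length} of the interval that fixes how many integers lie in the window, and hence the kernel.

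Run your Step~1 with $[-V,V]$ and $L=V-\tfrac12$. For $t\in(k-\tfrac12,k+\tfrac12)$ the integers with $|t-m|\le V$ are $k-L,\dots,k+L$, that is $2L+1=2V$ of them. Your Steps~2--3 then produce $|\sum_{|j|\le L}e(j\beta)|^2=|\sin 2\pi V\beta/\sin\pi\beta|^2$. Hence the identity as printed is false. Take $u_0=1$ and $u_m=0$ otherwise, so $S\equiv1$. The left side is $2V$, while the right side is $\int_{-1/2}^{1/2}|\sin 2\pi L\beta/\sin\pi\beta|^2d\beta=2L=2V-1$. You were right to flag your reading of $\myalpha_0$, but the repair has to be made to the kernel, not to $\myalpha_0$.

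The paper's proof applies Parseval on $\mathbb{R}$ to $U\star\myalpha_0$, periodises $|\hat{\myalpha}_0|^2$, and uses Poisson summation with $\hat F_V(\ell)=(2V-|\ell|)^+$. This correctly reaches $|\sum_{|\ell|\le L}e(-\ell x)|^2$ in \eqref{generic0}. It then misreads this in \eqref{eq:13} as $|\sin 2\pi Lx/\sin\pi x|^2$; a Dirichlet kernel with $2L+1$ terms has modulus $|\sin\pi(2L+1)x/\sin\pi x|=|\sin 2\pi Vx/\sin\pi x|$.

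The consistent correction is to replace $\sin\pi(2V-1)\beta$ by $\sin 2\pi V\beta$ here, in Lemma~\ref{dec2} and in Theorem~\ref{Nearby}. Three things support this choice:
\begin{itemize}
\item Lemma~\ref{dec2} already computes with $|\sum_{|\ell|\le L}e(\ell\beta)|^2$ and $(2L+1)^2=4V^2$.
\item With the corrected kernel, the normalised factor in Theorem~\ref{Nearby} equals $1$ at $\beta=0$, as it should.
\item Every application of Lemma~\ref{BaseCamp} to $\myalpha_0$ relies on $A(\myalpha_0)=2V$ and $B(\myalpha_0)=4V^2$.
\end{itemize}
Shrinking $\myalpha_0$ as you did would turn these into $2V-1$ and $(2V-1)^2$ and make the window asymmetric, which breaks those normalisations.

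Once the kernel is corrected, your argument is a valid proof and a genuinely different one from the paper's. You write the function as a step function constant on unit intervals and apply Parseval on $\mathbb{R}/\mathbb{Z}$ to a discrete convolution. This avoids the continuous Fourier transform and Poisson summation entirely. It also makes the window count transparent, and that count is exactly where the paper slipped. The paper's route buys generality instead: it gives $\int_{-1/2}^{1/2}|S(x)|^2\sum_m|\hat{\myalpha}(x-m)|^2dx$ for any square-integrable $\myalpha$, not only for indicators with $V-\tfrac12\in\mathbb{Z}$.
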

%%%%%%%%%%%%%%%

%%%%%%%%%%%%%%
\begin{proof}
  We associate to $(u_m)$ the distribution
  \begin{equation}
    \label{defU}
    U(t)=\sum_{m\in\mathbb{Z}}u_m\delta_m
  \end{equation}
  so that
  \begin{equation}
    \label{expandUstara}
    (U\star \myalpha)(t)=\int_{\infty}^\infty U(y)\myalpha(t-y)dy
    =\sum_{m\in\mathbb{Z}}u_m\myalpha(t-m).
  \end{equation}
  We may directly check from the expression in~\eqref{expandUstara} that
  \begin{equation*}
    \widehat{U\star \myalpha}(x)=S(x)\hat{\myalpha}(x).
  \end{equation*}
  By Parseval, this gives us that
  \begin{equation}
    \label{eq:5}
    \int_{-\infty}^{\infty}|(U\star\myalpha)(t)|^2dt
    =
    \int_{-\infty}^{\infty}|S(x)|^2|\hat{\myalpha}(x)|^2dx.
  \end{equation}
  We also find that
  \begin{equation}
    \label{eq:6}
    \hat{\myalpha}_0(x)
    =\frac{\sin 2\pi Vx}{\pi x}.
  \end{equation}
  Consequently
  \begin{equation*}
    \int_{-\infty}^\infty
    \Bigl|\sum_{m}u_m\myalpha_0(t-m)\Bigr|^2
    dt
    =\int_{-\infty}^{\infty}|S(x)|^2\biggl|\frac{\sin 2\pi Vx}{\pi x}\biggr|^2dx.
  \end{equation*}
  As $S(x)$ is 1-periodical, we find that
  \begin{equation*}
    \int_{-\infty}^{\infty}|S(x)|^2\biggl|\frac{\sin 2\pi Vx}{\pi
      x}\biggr|^2dx
    =
    \int_{-1/2}^{1/2}
    |S(x)|^2\sum_{m\in\mathbb{Z}}\biggl(\frac{\sin 2\pi V(x-m)}{\pi (x-m)}\biggr)^2dx.
  \end{equation*}
  Let us set
  \begin{equation}
    \label{defFV}
    F_V(x)=\biggl(\frac{\sin 2\pi Vx}{\pi x}\biggr)^2.
  \end{equation}
  By Poisson summation formula, we have
  \begin{equation}
    \label{eq:12}
    \sum_{m\in\mathbb{Z}}\biggl(\frac{\sin 2\pi V(x-m)}{\pi
      (x-m)}\biggr)^2
    =
    \sum_{\ell\in\mathbb{Z}}\hat{F}_V(\ell)e(-\ell x).
  \end{equation}
  We find that
  \begin{align*}
    \hat{F}_V(y)
    &=
      \int_{-\infty}^{\infty}
      \biggl(\frac{\sin 2\pi Vx}{\pi x}\biggr)^2e(-xy)dx
    \\&=
    \int_{-\infty}^{\infty}2V
    \biggl(\frac{\sin \pi 2Vx}{2V\pi x}\biggr)^2e(-(2Vx)y/(2V))d(2Vx)
    \\&=
    2V\widehat{\sinc^2}(y/(2V))=
    2V(1-|y/(2 V)|)^+
  \end{align*}
% see\texttt{https://dsp.stackexchange.com/questions/79257/fourier-transform-of-textrmsinc2100-pi-t}
so that
\begin{align}
  \sum_{m\in\mathbb{Z}}\biggl(\frac{\sin 2\pi V(x-m)}{\pi
    (x-m)}\biggr)^2
  &=\notag
  2V\sum_{|\ell|\le 2V}\biggl(1-\frac{|\ell|}{2 V}\biggr) e(-\ell x)
  \\&=\label{generic0}
  \biggl|\sum_{|\ell|\le L} e(-\ell x)\biggr|^2
\end{align}
the last equality being valid when $V=L+\tfrac12$ where $L$ is an integer. Therefore
\begin{equation}
  \label{eq:13}
  \sum_{m\in\mathbb{Z}}\biggl(\frac{\sin 2\pi V(x-m)}{\pi
    (x-m)}\biggr)^2
  =
  \biggl|
  \frac{\sin 2\pi Lx}{\sin\pi x}
  \biggr|^2.
\end{equation}
This concludes the proof of this lemma.
\end{proof}
%%%%%%%%%%%%%%

%%%%%%%%%%%%%%
\begin{lem}
  \label{dec2}
  When $V-\tfrac12$ is an integer, we have
   \begin{multline*}
    \int_{-1/2}^{1/2}|S(\beta)|^2
    \biggl|\frac{\sin \pi (2V-1)\beta}{\sin\pi\beta}\biggr|^2d\beta
    \ge
    \frac{|S(0)|^2}{N}4V^2\biggl(1-\frac{2V}{N}\biggr)
    \\+
    \int_{-1/2}^{1/2}\biggl|S(\beta)-\frac{S(0)}{N}
    \sum_{-2V< n< N+2V}\mkern-20mu e(n\beta)\biggr|^2
    \biggl|\frac{\sin \pi (2V-1)\beta}{\sin\pi\beta}\biggr|^2d\beta.
  \end{multline*}
\end{lem}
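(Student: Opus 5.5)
The plan is to combine the exact identity of Lemma~\ref{integraldecomposition} with the Fourier translation of Lemma~\ref{Switch}. The obstacle is that the boundary bookkeeping (step three) appears to push the inequality the wrong way.

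\textbf{Step 1: the identity in physical space.} Take $\myalpha=\myalpha_0$, the indicator of $[-V,V]$, so that $\Sigma=2V$. With $X=\sum_m u_m=S(0)$, Lemma~\ref{integraldecomposition} gives the exact identity
\begin{equation*}
\int_{-\infty}^\infty\Bigl|\sum_m u_m\myalpha_0(t-m)\Bigr|^2dt
=\frac{4V^2}{N}\Bigl(1-\frac{2V}{N}\Bigr)|S(0)|^2
+\int_{-V}^{N+V}\Bigl|\sum_m u_m\myalpha_0(t-m)-\frac{2VS(0)}{N}\Bigr|^2dt .
\end{equation*}
By Lemma~\ref{Switch}, the left-hand side is exactly the left-hand side of Lemma~\ref{dec2}. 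It therefore remains to compare the last integral with the Fourier-side integral involving $S(\beta)-\frac{S(0)}{N}\sum_{-2V<n<N+2V}e(n\beta)$.

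\textbf{Step 2: rewrite the constant as a convolution.} Introduce the modified sequence
\begin{equation*}
v_n=u_n-\frac{S(0)}{N}\1_{-2V<n<N+2V},
\end{equation*}
whose trigonometric polynomial is precisely $S(\beta)-\frac{S(0)}{N}\sum_{-2V<n<N+2V}e(n\beta)$.

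Since $V-\tfrac12$ is an integer, every interval $[t-V,t+V]$ contains exactly $2V$ integers for almost every~$t$. When $t\in[-V,N+V]$, all these integers lie in $(-2V,N+2V)$. Hence on $[-V,N+V]$ one has $\sum_n \1_{-2V<n<N+2V}\myalpha_0(t-n)=2V$ almost everywhere, and so
\begin{equation*}
\sum_m u_m\myalpha_0(t-m)-\frac{2VS(0)}{N}=\sum_m v_m\myalpha_0(t-m)\qquad(t\in[-V,N+V]) .
\end{equation*}

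\textbf{Step 3: pass to the whole line and to Fourier space.} Next extend the integral of $|\sum_m v_m\myalpha_0(t-m)|^2$ from $[-V,N+V]$ to all of $\mathbb{R}$, and apply Lemma~\ref{Switch} to $(v_m)$ to reach the Fourier integral in the statement. In passing, I would reconcile the normalisation of the kernel in Lemma~\ref{Switch}: with $V=L+\tfrac12$ one gets $2L+1=2V$ terms, not $2V-1$. This is a bookkeeping check on~\eqref{generic0}.

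\textbf{Main obstacle: the sign of the boundary contribution.} Extending the integral adds the tail pieces $t\in(-3V,-V)$ and $t\in(N+V,N+3V)$. On these, $\sum_m u_m\myalpha_0(t-m)$ vanishes, since $u$ is supported on $[1,N]$, but the constant part does not. Extension therefore produces an upper bound for $\int_{-V}^{N+V}$ by the full-line integral, which is the opposite of the direction needed.

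To obtain the inequality as stated, one must show this tail contribution is absorbed, or else modify the construction. The first thing I would try is to replace the sharp cut-off $\1_{-2V<n<N+2V}$ by one that makes the convolution vanish outside $[-V,N+V]$. Truncating to $-V<n<N+V$ does this, but then $t$ near the ends of $[-V,N+V]$ sees fewer than $2V$ integers; I would compare the two resulting boundary errors explicitly. Failing that, I would keep the term $\frac{S(0)^2}{N^2}\cdot O(V^3)$ coming from the tails and check whether it can be absorbed into the factor $(1-2V/N)$ of the main term. This is where I expect the real work, and possibly a correction to the stated constants, to lie.
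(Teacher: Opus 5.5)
\textbf{Your proposal stops at the decisive step, and that step cannot be completed, because the lemma as stated is false.} The obstacle you found is real. Your Steps~1--3 are in fact an exact computation.

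Take $(u_m)$ supported in $[1,N]$ (as in the paper's proof), $V=L+\tfrac12$, and the kernel $K(\beta)=|\sin 2\pi V\beta/\sin\pi\beta|^2$. Your normalisation check is correct: Lemma~\ref{Switch} really produces $2V$, not $2V-1$. On the tails $t\notin[-V,N+V]$ only the constant part survives. There, the number of integers of $(-2V,N+2V)$ lying in $[t-V,t+V]$ drops from $2L$ to $1$ over $2L$ consecutive unit intervals on each side. So the tail contributes exactly $\frac{|S(0)|^2}{N^2}\cdot 2\sum_{i\le 2L}i^2$, and hence
\begin{multline*}
\int_{-1/2}^{1/2}|S(\beta)|^2K(\beta)\,d\beta
=\frac{4V^2|S(0)|^2}{N}\Bigl(1-\frac{2V}{N}\Bigr)
\\+\int_{-1/2}^{1/2}\Bigl|S(\beta)-\frac{S(0)}{N}\sum_{-2V<n<N+2V}e(n\beta)\Bigr|^2K(\beta)\,d\beta
\\-\frac{2V(2V-1)(4V-1)}{3N^2}|S(0)|^2 .
\end{multline*}
The claimed inequality therefore fails whenever $S(0)\neq0$ and $V\ge\tfrac32$. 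For instance, take $V=\tfrac32$ and $u=\1_{[1,N]}$:
\begin{itemize}
\item with $K$, the left-hand side is $9N-8$ while the right-hand side is $9(N-3)+29=9N+2$;
\item with the kernel as literally printed, $|\sin 2\pi\beta/\sin\pi\beta|^2$, one gets $4N-2$ against $9N-11$.
\end{itemize}
Neither rescue you suggest can work. Truncating the comparison sum proves a different statement. The $V^3|S(0)|^2/N^2$ tail cannot be absorbed without weakening the factor $1-2V/N$.

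The paper argues directly on the Fourier side. It expands $|S-\theta S(0)T|^2K$ with $T(\beta)=\sum_{-2V<n<N+2V}e(n\beta)$. It evaluates the cross term exactly as $4V^2|S(0)|^2$ by counting solutions of $m=n+\ell_1-\ell_2$, and bounds $\int|T|^2K$ trivially by $(N+4L+1)4V^2$. With $\theta=1/N$ this yields the factor $1-\frac{4V-1}{N}$, not $1-\frac{2V}{N}$. The closing sentence of that proof overlooks the difference, so the paper's argument actually establishes this weaker, true version (with the kernel $K$).

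Your physical-space route, with the tail computed rather than discarded, gives the exact factor $1-\frac{2V}{N}-\frac{(2V-1)(4V-1)}{6VN}$. This lies between the stated factor and the paper's. The discrepancy is of relative order $V/N$ and is harmless in Corollary~\ref{ManypsL20}. The right ending for your write-up is to state and prove this corrected inequality, not to search for a way to absorb the tail.
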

%%%%%%%%%%%%%%

%%%%%%%%%%
\begin{proof}
  Let us set
  \begin{equation*}
    \Sigma
    =
    \int_{-1/2}^{1/2}\biggl|S(\beta)-Z\theta\sum_{-2L\le n\le N+2L}e(n\beta)\biggr|^2
    \biggl|\frac{\sin \pi (2V-1)\beta}{\sin\pi\beta}\biggr|^2d\beta
  \end{equation*}
  for some real $\theta$ and where $Z=S(0)$.
  On expanding the square, we readily find that
  \begin{align}
    \label{splitSigma}
    \Sigma
    =&
    \int_{-1/2}^{1/2}|S(\beta)|^2
    \biggl|\frac{\sin \pi (2V-1)\beta}{\sin\pi\beta}\biggr|^2d\beta
    \\&-2\theta\Re Z\int_{-1/2}^{1/2}\overline{S(\beta)}\sum_{-2L\le n\le N+2L}e(n\beta)
    \biggl|\frac{\sin 2\pi V\beta}{\sin\pi\beta}\biggr|^2d\beta
    \\&+|Z|^2\theta^2
    \int_{-1/2}^{1/2}\biggl|\sum_{-2L\le n\le N+2L}e(n\beta)\biggr|^2
    \biggl|\frac{\sin 2\pi (2V-1)\beta}{\sin\pi\beta}\biggr|^2d\beta.
  \end{align}
  Let us simplify the last two terms.
  Concerning the first of these, we get, with $2V-1=2L$,
  \begin{align*}
    \int_{-1/2}^{1/2}\overline{S(\beta)}&\sum_{-2L\le n\le N+2L}e(n\beta)
    \biggl|\frac{\sin \pi (2V-1)\beta}{\sin\pi\beta}\biggr|^2d\beta
    \\&=
      \sum_{m\le N}\overline{u_m}
      \int_{-1/2}^{1/2}e(-m\beta)\sum_{-2L\le n\le N+2L}e(n\beta)
    \biggl|\sum_{|\ell|\le L} e(-\ell \beta)\biggr|^2d\beta
    \\&=
      \sum_{m\le N}\overline{u_m}
    \sum_{\substack{n\le N,|\ell_1|,|\ell_2|\le L\\
    m=n+\ell_1-\ell_2}}1
    =
      \sum_{m\le N}\overline{u_m}
    \sum_{\substack{n\le N,|h|\le 2L\\
    m=n+h}}(2L+1-|h|).
  \end{align*}
  We find that
  \begin{equation*}
    \sum_{\substack{-2L\le n\le N+2L\\0\le |h|\le 2L\\
        m=n+h}}(2L+1-|h|)
    =
      (2L+1)^2=4V^2,
    \end{equation*}
    since, giving any $m$ and any $h$, a proper $n$ exists.
  The 
  treatment concerning the last term is similar, but with
  $\sum_{-2L\le n\le N+2L}e(n\beta)$ rather than $S(\beta)$. The range
  in $n$ is different, so we have to modify that part when $-2L\le
  n\le 0$ and when $N+1\le n\le 2L$. The total contribution is at most~$(4L+1)V^2$
  so that we
  get
  \begin{equation*}
    |Z|^2\theta^2
    \int_{-1/2}^{1/2}\biggl|\sum_{-2L\le n\le N+2L}e(n\beta)\biggr|^2
    \biggl|\frac{\sin 2\pi (2V-1)\beta}{\sin\pi\beta}\biggr|^2d\beta
    \le |Z|^2\theta^2(N+4L+1)4V^2.
  \end{equation*}
  This leads to the inequality
  \begin{equation}
    \label{lastsplit}
    \int_{-1/2}^{1/2}|S(\beta)|^2
    \biggl|\frac{\sin \pi (2V-1)\beta}{\sin\pi\beta}\biggr|^2d\beta
    \ge
    \Sigma+
    2|Z|^2\theta (4V^2)-|Z|^2\theta^2(N+4L+1)(4V^2).
  \end{equation}
  We choose $\theta=1/N$, and this ends this proof.
\end{proof}
%%%%%%%%%%

%%%%%%%%%%%%%%%%%%%%%%%%%%%%%%%%%%%%%%% 
\section{A special function}
%%%%%%%%%%%%%%%%%%%%%%%%%%%%%%%%%%%%%%%

Let us follow the classical paper \cite{Vaaler*85} by J.~Vaaler.
We first introduce some notation and define
\begin{equation}
  \label{defJhat}
  \hat{J}(t)
  =
  \begin{cases}
    1&\text{if $t=0$},\\
    \pi t(1-|t|)\cot\pi t+|t|&\text{if $0<|t|<1$},\\
    0&\text{if $1\le |t|$}.
  \end{cases}
\end{equation}
As per \cite[Theorem 6]{Vaaler*85},
the function $\hat{J}$ is even, non-negative, continuously
differentiable, and strictly decreasing on~$[0,1]$.
We continue with the definitions from the same paper:
\begin{equation}
  \label{eq:2}
  K(z)=\biggl(\frac{\sin\pi z}{\pi z}\biggr)^2,
  H(z)=\biggl(\frac{\sin\pi z}{\pi }\biggr)^2
  \biggl(\sum_{m\in\mathbb{Z}}\frac{\sgn(m)}{(z-m)^2}
  +\frac{2}{z}\biggr)
  \end{equation}
  (with $\sgn0=1$), and the Beurling-Selberg function
  $B(x)=K(x)+H(x)$. We finally set
  \begin{equation}
    \label{defCabdelta}
  2C_{[a,b],\delta}(x)
  =
  B(\delta(b-x))
  +B(\delta(x-a))\ge2\1_{x\in[a,b]}.
\end{equation}
%%%%%%%%%%%%%
\begin{lem}
  \label{BS}
  Let $\delta>0$ and $M\ge0$ be two parameters.
  The function $C_{[-M,M],\delta}$ is an upper bound for the
  characteristic function of $[-M,M]$.
  When $|t|\le \delta$, we have
  \begin{equation*}
    \hat{C}_{[-M,M],\delta}(t)
    =
    \delta^{-1}(1-|\delta^{-1}t|)\cos 2\pi M t
    +\frac{\hat{J}(\delta^{-1}t)}{\pi t}\sin 2\pi M t.
  \end{equation*}
  When $|t|\ge\delta$, we have $\hat{C}_{[-M,M],\delta}(t)=0$.
  We have $|\hat{C}_{[-M,M],\delta}(t)|\le \hat{C}_{[-M,M],\delta}(0)=2M+\delta^{-1}$.
  %We also have $C_{[-M,M],\delta}(u)\le \1_{[-M,M]}(u)+K(\delta(M-|u|))$.
\end{lem}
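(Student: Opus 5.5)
The plan is to reduce everything to Vaaler's computation of $\hat{B}$, applied to $C_{[-M,M],\delta}$ through translation and dilation.

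\textbf{Majorant property.} This is built into definition~\eqref{defCabdelta}. We use Beurling's classical fact that $B(x)\ge \sgn(x)$, with $\sgn(0)=1$. Then
\begin{equation*}
B(\delta(M-x))+B(\delta(x+M))\ge \sgn(M-x)+\sgn(x+M).
\end{equation*}
The right-hand side equals $2$ on $[-M,M]$. Outside that interval it equals $0$, because one sign is $+1$ and the other is $-1$. This gives $C_{[-M,M],\delta}\ge \1_{[-M,M]}$.

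\textbf{Fourier transform.} Recall Vaaler's Theorem~6 in~\cite{Vaaler*85}. Since $B$ is not integrable, its transform is taken in the sense of distributions, and it reads
\begin{equation*}
\hat B(t)=\hat K(t)+\hat H(t),\qquad \hat K(t)=(1-|t|)^+ ,\qquad \hat H(t)=\frac{\hat J(t)}{i\pi t}\ (t\ne0),
\end{equation*}
up to the sign convention for $e(-xt)$, plus a Dirac mass at~$0$ coming from the $\sgn$-like behaviour. In particular $\hat B$ is supported in $[-1,1]$ away from~$0$. We then use the two standard identities
\begin{equation*}
\widehat{B(\delta(x-a))}(t)=\delta^{-1}e(-at)\hat B(t/\delta),\qquad \widehat{B(\delta(b-x))}(t)=\delta^{-1}e(-bt)\hat B(-t/\delta).
\end{equation*}
Add these with $a=-M$ and $b=M$, and halve. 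The even part $\hat K$ gives $\delta^{-1}(1-|t/\delta|)\cos 2\pi Mt$. The odd part $\hat H$ gives
\begin{equation*}
\tfrac12\delta^{-1}\,\frac{\hat J(t/\delta)}{i\pi t/\delta}\bigl(e(Mt)-e(-Mt)\bigr)=\frac{\hat J(t/\delta)}{\pi t}\sin 2\pi Mt .
\end{equation*}
The Dirac masses have opposite signs in the two terms and cancel, because the sum $C_{[-M,M],\delta}$ is integrable. This cancellation is the point where care is needed.

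\textbf{Support and bound.} The vanishing of $\hat C$ for $|t|\ge\delta$ follows from $\hat J=0$ and $\hat K=0$ outside $(-1,1)$. At $t=0$ we use $\hat J(0)=1$ and $\sin(2\pi Mt)/(\pi t)\to 2M$, which gives $\hat C(0)=\delta^{-1}+2M$. For the inequality $|\hat C(t)|\le\hat C(0)$ we use that $0\le \hat J\le 1$ and $|\sin 2\pi Mt/(\pi t)|\le 2M$, so
\begin{equation*}
|\hat C(t)|\le \delta^{-1}+2M.
\end{equation*}
Alternatively, $C\ge0$ gives $|\hat C(t)|\le\int C=\hat C(0)$ directly, and this is the cleaner route.

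\textbf{Main obstacle.} The delicate step is justifying the distributional transform of $H$, that is, its $\sgn$ part, and verifying that the singular parts cancel in the symmetric combination. To handle this, I would write $B(x)-\sgn(x)$, which is integrable, and $\sgn(\delta(M-x))+\sgn(\delta(x+M))=2\1_{[-M,M]}$. Then
\begin{equation*}
\hat C=\widehat{\1_{[-M,M]}}+\tfrac12\bigl(\text{transforms of integrable pieces}\bigr),
\end{equation*}
with $\widehat{\1_{[-M,M]}}(t)=\sin(2\pi Mt)/(\pi t)$. Vaaler's formula for $\widehat{B-\sgn}$ then recombines into the stated expression.
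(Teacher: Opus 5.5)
Your proposal is correct and is essentially the paper's proof. The paper also writes $2C_{[a,b],\delta}=2\1_{[a,b]}+F(\delta(b-x))+F(\delta(x-a))$ with the integrable $F=B-\sgn$, takes $\hat F$ from Vaaler's Corollary~7, applies the translation--dilation rules and specializes to $a=-M$, $b=M$, exactly as in your final paragraph. One slip in your heuristic pass: $\hat B$ carries no Dirac mass at $0$, since the transform of $\sgn$ is the odd principal value of $1/(i\pi t)$, and an even Dirac mass would add rather than cancel in the symmetric sum. What actually disappears is these principal-value terms multiplied by $e(\pm Mt)$, and your $B-\sgn$ decomposition handles that anyway.
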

%%%%%%%%%%%%%
\noindent
Notice that $\hat{C}_{[-M,M],\delta}(t)=\delta^{-1}\hat{C}_{[-\delta
    M,\delta M],1}(\delta^{-1}t)$.

%%%%%%%%%%%%%
\begin{proof}
  Let us start with somewhat more generality and write
  \begin{equation*}
    2C_{[a,b],\delta}(x)
    =2\1_{x\in[a,b]}
    +B(\delta(b-x))-\sgn\delta(b-x)
    +B(\delta(x-a))-\sgn\delta(x-a).
  \end{equation*}
  Set $F(x)=B(x)-\sgn x$ and $F_2(x)=F(\delta (b-x))$.
  By \cite[Corollary 7]{Vaaler*85}, we have
  \begin{align*}
    \hat{F}(t)
    &=(1-|t|)\1_{|t|\le 1}+\frac{1}{i\pi t}\bigl(\hat{J}(t)-1\bigr)
    % &=
    %   \begin{cases}
    %     1&\text{if $t=0$},\\
    %     1-t+\frac{1-t}{i}\cot\pi t+\frac{1}{i\pi}-\frac{1}{i\pi t}&\text{if $0<t<1$},\\
    %     1+t+\frac{1+t}{i}\cot\pi t-\frac{1}{i\pi}-\frac{1}{i\pi t}&\text{if $-1<t<0$},\\
    %     0&\text{if $1\le |t|$}.
    %   \end{cases}
  \end{align*}
  and classically $\hat{F}_2(t)=e(bt)\delta^{-1}\hat{F}(-t/\delta)$.
  This leads to
  \begin{equation*}
    2\hat{C}_{[a,b],\delta}(t)
    =\frac{e(bt)-e(at)}{i\pi t}
    +\delta^{-1}e(b t)\hat{F}(-\delta^{-1} t)
    +\delta^{-1}e(a t)\hat{F}(\delta^{-1} t).
  \end{equation*}
  Let us proceed by specializing $a=-b=-M$, we get
  \begin{equation*}
     2\delta\hat{C}_{[-M,M],\delta}(t)
    =
    (e(M t)+e(-Mt))(1-|\delta^{-1}t|)^+
    +\frac{e(Mt)-e(-Mt)}{i\pi \delta^{-1}t}
   \hat{J}(\delta^{-1}t).
 \end{equation*}
 Finally, we find that
 \begin{align*}
   \delta\hat{C}_{[-M,M],\delta}(t)
   &=
   (1-|\delta^{-1}t|)^+\cos 2\pi M t
    +\frac{\hat{J}(\delta^{-1}t)}{\pi \delta^{-1}t}\sin 2\pi M t
    \\&=
    \hat{C}_{[-\delta M,\delta M],1}(\delta^{-1}t).
 \end{align*}
 The lemma follows readily.
\end{proof}
%%%%%%%%%%%%%
%%%%%%%%%%%%%
\begin{lem}
  \label{BSSpe}
  Let $\lambda$ be a positive parameter.
  When $|u|\le 1$, we have
  \begin{equation*}
    \hat{C}_{[-\lambda,\lambda],1}(u)
    =
    (1-|u|)\cos 2\pi \lambda u
    +\frac{\hat{J}(u)}{\pi u}\sin 2\pi \lambda u.
  \end{equation*}
  When $|u|\ge 1$, we have $\hat{C}_{[-\lambda,\lambda],1}(u)=0$.
  When $|y|\le \lambda$, we have $C_{[-\lambda,\lambda],1}(y)\ge 1 $.
\end{lem}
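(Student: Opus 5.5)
Lemma~\ref{BSSpe} is the case $\delta=1$, $M=\lambda$ of Lemma~\ref{BS}, together with the majorant property built into the definition~\eqref{defCabdelta}. The plan is to specialize and then check the one inequality that is not a direct substitution.

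First, the Fourier transform. Lemma~\ref{BS} with $\delta=1$ and $M=\lambda>0$ gives, for $|u|\le1$,
\[
\hat C_{[-\lambda,\lambda],1}(u)=(1-|u|)\cos 2\pi\lambda u+\frac{\hat J(u)}{\pi u}\sin 2\pi\lambda u .
\]
It also gives $\hat C_{[-\lambda,\lambda],1}(u)=0$ for $|u|\ge1$. At $u=0$ the quotient is read by continuity: $\hat J(0)=1$ and $\sin(2\pi\lambda u)/(\pi u)\to 2\lambda$, so the value is $1+2\lambda$. This agrees with the value $2M+\delta^{-1}$ stated in Lemma~\ref{BS}.

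Second, the lower bound $C_{[-\lambda,\lambda],1}(y)\ge1$ for $|y|\le\lambda$. By~\eqref{defCabdelta},
\[
2C_{[-\lambda,\lambda],1}(y)=B(\lambda-y)+B(y+\lambda).
\]
For $|y|\le\lambda$ both arguments $\lambda-y$ and $\lambda+y$ are $\ge0$. So it suffices to know that $B(x)\ge\sgn(x)=1$ for $x\ge0$, with the convention $\sgn 0=1$. This is the classical Beurling--Selberg majorant property $B(x)\ge\sgn x$ for all real $x$, which is Theorem~8 of~\cite{Vaaler*85}. I would invoke it directly. Adding the two inequalities $B(\lambda\mp y)\ge1$ gives $2C\ge2$, which is the claim.

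The only delicate point is the endpoint $x=0$, that is $|y|=\lambda$. There one needs $B(0)\ge1$. From~\eqref{eq:2}, $K(0)=1$, and $H(0)=0$: near $z=0$ the factor $(\sin\pi z/\pi)^2=O(z^2)$ kills the $1/z^2$ term (coming from $m=0$, $\sgn0=1$) down to a bounded quantity, and kills the $2/z$ term entirely. In fact $H(z)\to1\cdot(\pi z)^2/(\pi^2 z^2)\cdot$ appropriate limit, so $B(0)=K(0)+H(0)$ must be evaluated carefully. Vaaler's normalization gives $B(0)\ge\sgn 0=1$. I would either cite this or note that it follows by continuity of $B$ from $B(x)\ge1$ for $x>0$. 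This endpoint check is the only place where care is needed; everything else is substitution into Lemma~\ref{BS}.
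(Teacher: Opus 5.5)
Your proposal is correct and is essentially the paper's own argument: the paper gives no separate proof, since the lemma is just Lemma~\ref{BS} with $\delta=1$ and $M=\lambda$, and the lower bound $C_{[-\lambda,\lambda],1}\ge 1$ on $[-\lambda,\lambda]$ is the majorant property $B(x)\ge\sgn x$ already recorded in~\eqref{defCabdelta}. Your aside on $H(0)$ is muddled: with $\sgn 0=1$ inside the sum, the $m=0$ term tends to $1$, giving $H(0)=1$ and $B(0)=2$, while Vaaler's convention gives $H(0)=0$ and $B(0)=1$. Your continuity argument from $B(x)\ge 1$ for $x>0$ settles the endpoint in either case, so you can simply drop that computation.
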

%%%%%%%%%%%%%

%%%%%%%%%%%%%%%%%%%%%%%%%%%%%%%%%%%%%%% 
\section{A family of Fourier transforms}
%%%%%%%%%%%%%%%%%%%%%%%%%%%%%%%%%%%%%%%
\label{family}
Let us start with an easy lemma.
%%%%%%%%%%%
\begin{lem}
  \label{fora0}
  Let us set $\myalpha_0(t)=\1_{|t|\le V}$. When $y\in[0,2V]$, we find that
\begin{equation}
  \label{eq:9}
  \int_{-\infty}^\infty \myalpha_0(t)\myalpha_0(t-y)dt
  =\max(0,2V-|y|)=\int_{|y|}^{2V}1dt.
\end{equation}
\end{lem}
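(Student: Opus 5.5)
The plan is a direct computation of the overlap integral: $\myalpha_0(t)\myalpha_0(t-y)$ is the indicator of the intersection of two intervals, and we measure its length. By definition $\myalpha_0(t)\myalpha_0(t-y)=1$ exactly when $|t|\le V$ and $|t-y|\le V$, and vanishes otherwise. The integral is therefore the length of $[-V,V]\cap[y-V,y+V]$.

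For $y\in[0,2V]$, the left endpoint of this intersection is $\max(-V,y-V)=y-V$ and the right endpoint is $\min(V,y+V)=V$. Since $y\le 2V$, we have $y-V\le V$, so the intersection is the genuine interval $[y-V,V]$, of length $2V-y$. Because $y\ge0$, this length equals $2V-|y|=\max(0,2V-|y|)$. The last expression $\int_{|y|}^{2V}1\,dt$ equals $2V-|y|$ by evaluating the trivial integral.

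For completeness one may note that the same formula holds for every real~$y$. By the substitution $t\mapsto -t$ and the evenness of $\myalpha_0$, the integral is an even function of~$y$. When $|y|>2V$, the two intervals are disjoint, so the integral vanishes, which matches the $\max(0,\cdot)$.

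There is no real obstacle here. The only points needing care are that the intersection is nonempty precisely because $y\le 2V$, and that the endpoint cases $y=0$ and $y=2V$ are consistent (the latter gives a single point, of measure zero).
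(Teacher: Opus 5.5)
Your computation is correct and is exactly the direct interval-overlap evaluation the paper has in mind; the paper states this lemma without proof and reuses the same fact, $(2T-|y|)^+$, in the proof of Lemma~\ref{Formyalpha}. One small caution on your aside: for $|y|>2V$ only the expression $\max(0,2V-|y|)$ stays valid, since the oriented integral $\int_{|y|}^{2V}1\,dt$ would then be negative.
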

%%%%%%%%%%%

We introduce a family of functions in~\eqref{defDhat} to clarify our
process. The main case will be $\myalpha=\myalpha_0$ as defined and
investigated around~\eqref{eq:9}. As it turns out, it is also the one
that governs the proof of Lemma~\ref{Formyalpha}.

%%%%%%%%%%%%%%%%%%%
\begin{lem}
  \label{Formyalpha}
  Let $\myalpha$ be a non-negative even Rieman-integrable function in
  $L^2(\mathbb{R})\cap L^1(\mathbb{R})$ that is non-increasing on
  $[0,\infty)$. The function
  \begin{equation*}
    (a\star a)(y)=
    \int_{-\infty}^{+\infty}\myalpha(t)\myalpha(y-t)dt
  \end{equation*}
  is even and non-increasing on $[0,\infty)$.
\end{lem}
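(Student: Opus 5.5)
Evenness is immediate: the substitution $t\mapsto -t$ together with the evenness of $\myalpha$ gives $(\myalpha\star\myalpha)(-y)=\int\myalpha(-t)\myalpha(-y+t)\,dt=(\myalpha\star\myalpha)(y)$. The function is finite everywhere because $\myalpha\in L^2(\mathbb{R})$ and Cauchy's inequality apply.

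For monotonicity, I plan to reduce to the case treated in Lemma~\ref{fora0} through a layer-cake decomposition. Since $\myalpha$ is non-negative, even, and non-increasing on $[0,\infty)$, each superlevel set $\{t:\myalpha(t)>s\}$ with $s>0$ is a symmetric interval $I_s$ of half-length $r(s)\in[0,\infty]$, closed or open at the endpoints. Its length is finite for $s>0$ because $\myalpha\in L^1$. We then have
\[
\myalpha(t)=\int_0^\infty \1_{I_s}(t)\,ds .
\]
Inserting this into the convolution and applying Tonelli, which is legitimate since everything is non-negative, gives
\[
(\myalpha\star\myalpha)(y)=\int_0^\infty\!\!\int_0^\infty \bigl|I_s\cap(y-I_{s'})\bigr|\,ds\,ds' .
\]
Since $I_{s'}$ is symmetric, $y-I_{s'}=y+I_{s'}$. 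So it suffices to show that, for two symmetric intervals of half-lengths $r_1,r_2$, the overlap length $|[-r_1,r_1]\cap[y-r_2,y+r_2]|$ is non-increasing in $y\ge0$. Endpoints do not matter for the measure.

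This overlap equals $\max\bigl(0,\min(r_1,y+r_2)-\max(-r_1,y-r_2)\bigr)$. For $y\ge0$ the term $\min(r_1,y+r_2)$ is non-decreasing in $y$ but bounded by $r_1$, so a more careful check is needed. I will write the overlap explicitly as
\[
\min\bigl(2\min(r_1,r_2),\; r_1+r_2-y\bigr)^+ .
\]
This holds because, for $0\le y\le |r_1-r_2|$, one interval contains the other, and for larger $y$ the overlap is $r_1+r_2-y$. In this form it is clearly non-increasing in $y\ge0$, and when $r_1=r_2=V$ it agrees with Lemma~\ref{fora0}. Integrating a pointwise non-increasing family against $ds\,ds'$ preserves monotonicity, which concludes the argument.

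The only delicate point is the measure-theoretic bookkeeping. One has to check that the superlevel sets really are symmetric intervals, which uses monotonicity and evenness, and to justify the Tonelli interchange. Both are routine given non-negativity and $\myalpha\in L^1$.
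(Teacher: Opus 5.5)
Your proof is correct and follows the same route as the paper, which also reduces to indicators of symmetric intervals and uses $(\1_{|t|\le T}\star\1_{|t|\le T})(y)=(2T-|y|)^+$. Your version is actually the more complete one. The paper treats only equal half-lengths and then asserts that monotonicity extends to non-negative combinations, but the self-convolution of such a combination also contains the mixed terms $\1_{|t|\le T}\star\1_{|t|\le T'}$. You check their profile $\min\bigl(2\min(T,T'),T+T'-|y|\bigr)^+$ explicitly, and your layer-cake formula with Tonelli replaces the limiting argument the paper leaves unstated.
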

%%%%%%%%%%%%%%%%%%%

%%%%%%%%%%%%
\begin{proof}
  Let us start with the next stream of identities.
  \begin{align*}
    (\myalpha\star \myalpha)(y)
    &=
    \int_{-\infty}^{+\infty}\myalpha(t)\myalpha(t-y)dt
    =
    \int_{-\infty}^{+\infty}\myalpha(t+y)\myalpha(t)dt
    =
    \int_{-\infty}^{+\infty}\myalpha(-t-y)\myalpha(-t)dt
    \\&=
    \int_{-\infty}^{+\infty}\myalpha((-y)-t)\myalpha(t)dt
    =(a\star a)(-y).
  \end{align*}
  This shows that the function $(\myalpha\star \myalpha)$ is indeed even.
  When $\myalpha(t)=\1_{|t|\le T}$, we have
  $(\myalpha\star\myalpha)(y)=(2T-|y|)^+$ which is indeed
  non-increasing when $y\ge0$. This property extends to linear
  combinations of similar functions, provided the coefficients are
  non-negative.
  The proof is complete.
\end{proof}
%%%%%%%%%%%%

%%%%%%%%%%%
\begin{lem}
  \label{Dmyalpha}
  Let $\myalpha$ be a function as in Lemma~\ref{Formyalpha} and let us
  define the non-negative function $\mybeta$ by
  $\int_{-\infty}^{+\infty}\myalpha(t)\myalpha(t-y)dt=\int_{|y|}^\infty
  \mybeta(t)dt$. For $\delta>0$, we consider
  \begin{equation}
    \label{defDhat}
    \hat{D}_{\myalpha,\delta}(u)=\int_{0}^\infty
    \hat{C}_{[-\lambda,\lambda],\delta}(u)
    \mybeta(\lambda)d\lambda
  \end{equation}
  where $C_{[-\lambda,\lambda],\delta}$ is defined in Eq.~\eqref{defCabdelta}.
  We have $\hat{D}_{\myalpha,\delta}(u)=0$ when $|u|\ge\delta$, then
  $\hat{D}_{\myalpha,\delta}(u)\le
  \int_0^\infty(\delta^{-1}+2\lambda)\mybeta(\lambda)d\lambda=\delta^{-1}A(\myalpha)+B(\myalpha)$
  where $A(\myalpha)=\|\myalpha\|_2^2$, $B(\myalpha)=\|\myalpha\|_1^2$ and
  \begin{equation}
    \label{eq:8}
    D_{\myalpha,\delta}(y)
    \ge
    \int_{|y|}^\infty
    \mybeta(\lambda)d\lambda=\int_{-\infty}^{+\infty}\myalpha(t)\myalpha(t-y)dt.
  \end{equation}
\end{lem}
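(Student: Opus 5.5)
We define $D_{\myalpha,\delta}$ as the superposition $D_{\myalpha,\delta}(y)=\int_0^\infty C_{[-\lambda,\lambda],\delta}(y)\,\mybeta(\lambda)\,d\lambda$, so that its Fourier transform is the integral~\eqref{defDhat}. Each of the three claims is then obtained from the corresponding property of $C_{[-\lambda,\lambda],\delta}$ in Lemma~\ref{BS}, integrated against the weight $\mybeta(\lambda)\,d\lambda$.

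The first step is to justify the definition of $\mybeta$. By Lemma~\ref{Formyalpha}, $\myalpha\star\myalpha$ is even and non-increasing on $[0,\infty)$. It also tends to $0$ at infinity, since $\myalpha\in L^1\cap L^2$. Hence it can be written as $\int_{|y|}^\infty \mybeta$ with $\mybeta\ge0$, at least as a non-negative measure; in the model case $\myalpha=\myalpha_0$ we get $\mybeta=\1_{[0,2V]}$ by Lemma~\ref{fora0}. From now on I write $\mybeta(\lambda)d\lambda$ for this measure. Interchanging the integrals is legitimate because everything is non-negative or absolutely integrable with compact Fourier support.

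The vanishing of $\hat D_{\myalpha,\delta}(u)$ for $|u|\ge\delta$ is immediate from Lemma~\ref{BS}, since each $\hat C_{[-\lambda,\lambda],\delta}(u)$ vanishes there. For the upper bound I use $|\hat C_{[-\lambda,\lambda],\delta}(u)|\le 2\lambda+\delta^{-1}$ from Lemma~\ref{BS} and integrate against $\mybeta\ge0$. It remains to identify the two moments:
\begin{equation*}
\int_0^\infty\mybeta(\lambda)\,d\lambda=(\myalpha\star\myalpha)(0)=\|\myalpha\|_2^2,
\end{equation*}
and, by Fubini,
\begin{equation*}
\int_0^\infty2\lambda\,\mybeta(\lambda)\,d\lambda
=\int_0^\infty 2\int_0^\lambda dy\,\mybeta(\lambda)\,d\lambda
=2\int_0^\infty(\myalpha\star\myalpha)(y)\,dy
=\int_{\mathbb{R}}\myalpha\star\myalpha=\|\myalpha\|_1^2,
\end{equation*}
using the evenness of $\myalpha\star\myalpha$. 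This gives $\delta^{-1}A(\myalpha)+B(\myalpha)$.

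For the lower bound~\eqref{eq:8}, Lemma~\ref{BS} gives $C_{[-\lambda,\lambda],\delta}(y)\ge\1_{|y|\le\lambda}$, and each $C$ is non-negative because $B\ge\sgn$ makes $C_{[-\lambda,\lambda],\delta}$ majorize the indicator. Integrating against $\mybeta(\lambda)d\lambda$ yields
\begin{equation*}
D_{\myalpha,\delta}(y)\ge\int_{|y|}^\infty\mybeta(\lambda)\,d\lambda=(\myalpha\star\myalpha)(y).
\end{equation*}

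The only delicate point is the Fubini/measure-theoretic justification, in particular that $\mybeta$ may be a measure when $\myalpha$ has jumps, as for $\myalpha_0$, and the inversion of the Fourier transform under the $\lambda$-integral. Both are handled by the uniform bound on $\hat C$ and the compact support $[-\delta,\delta]$.
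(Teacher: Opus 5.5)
Your proposal is correct and follows the paper's argument: each claim is the matching property of $C_{[-\lambda,\lambda],\delta}$ from Lemma~\ref{BS}, integrated against $\mybeta(\lambda)\,d\lambda$. Your Tonelli computation of $\int_0^\infty 2\lambda\mybeta(\lambda)\,d\lambda=\|\myalpha\|_1^2$ is a tidier version of the paper's integration by parts against $\gamma(y)=\int_y^\infty\mybeta$, with no boundary term $\lambda\gamma(\lambda)$ to control. You also state explicitly the non-negativity of $C_{[-\lambda,\lambda],\delta}$ that the lower bound needs, which the paper leaves implicit. Your hedge that $\mybeta$ might only be a measure is unnecessary but harmless: here $\myalpha\star\myalpha$ is absolutely continuous, and for instance $\mybeta=\1_{[0,2V]}$ when $\myalpha=\myalpha_0$.
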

%%%%%%%%%%%

%%%%%%%%%%%%
\begin{proof}
  Most of this lemma is a rather trivial consequence of Lemma~\ref{BS}
  and the properties of the different players. We find at first that
  $B(\myalpha)=2\int_0^\infty\lambda\mybeta(\lambda)d\lambda$. Let us
  express this quantity in terms of $\myalpha$.
  Let us set
  \begin{equation}
    \label{eq:11}
    \gamma(y)=\int_{y}^\infty \mybeta(t)dt.
  \end{equation}
  We have
  \begin{align*}
    B(\myalpha) &=  2\int_0^\infty\lambda\mybeta(\lambda)d\lambda
        = -2 \int_0^\infty\lambda\gamma'(\lambda)d\lambda
    = 2 \int_0^\infty \gamma(\lambda)d\lambda
    \\&=\int_0^\infty \int_{-\infty}^\infty \myalpha(t)\myalpha(t-\lambda)dt\,d\lambda
    +\int_0^\infty \int_{-\infty}^\infty
    \myalpha(t)\myalpha(t+\lambda)dt\,d\lambda
    =\biggl(\int_{-\infty}^\infty \myalpha(t)dt\biggr)^2.
  \end{align*}
  The lemma follows swiftly.
\end{proof}
%%%%%%%%%%%%

%%%%%%%%%%%%%%%%%%%%%%%%%%
\section{Explicit formulae for plots}
%%%%%%%%%%%%%%%%%%%%%%%%%%
Let us more closely the case
$\myalpha_0(t)=\1_{|t|\le 
  V}$. Lemma~\ref{fora0} contains the necessary material and we find that
\begin{equation}
  \label{eq:10}
  A(\myalpha_0)=2V,\quad B(\myalpha_0)=4V^2.
\end{equation}
We have $1/(2Q^2)\le \delta\le 1/(2Q)$ and we may use
\begin{equation}
  \label{eq:1}
  \frac{\hat{D}_{\myalpha_0,\delta}(u)}{2V\delta^{-1}+4V^2}
  \le \1_{[-\delta,\delta]}(u).
\end{equation}
In the main case, which is $\delta$ large, $4V^2$ is much larger than
$2V\delta^{-1}$. We have, on recalling Lemma~\ref{Dmyalpha}, \ref{BS} and~\ref{BSSpe},
\begin{align*}
  \hat{D}_{\myalpha_0,\delta}(u)
  &=\int_0^{2V} \hat{C}_{[-\lambda,\lambda],\delta}(u)d\lambda
  =\delta^{-1}\int_0^{2V} \hat{C}_{[-\delta\lambda,\delta\lambda],1}(\delta^{-1}u)d\lambda
  \\&=\delta^{-1}
  \int_0^{2V} \biggl(
  (1-|\delta^{-1}u|)\cos 2\pi \lambda u
    +\frac{\hat{J}(\delta^{-1}u)}{\pi \delta^{-1}u}\sin 2\pi \lambda u
  \biggr)d\lambda
  \\&=\delta^{-1}
  \biggl(
  (1-|\delta^{-1}u|)\frac{\sin 4\pi Vu}{2\pi u}
    +\frac{\hat{J}(\delta^{-1}u)}{\pi \delta^{-1}u}\frac{1-\cos 4\pi V
  u}{2\pi u}
  \biggr)
\end{align*}
Consequently, we find that
\begin{align}
  \hat{D}_{\myalpha_0,\delta}(u)
  &=\delta^{-2}
  \biggl(
  (1-|w|)\frac{\sin 4\pi W w}{2\pi  w}
    +\frac{\hat{J}(w)}{\pi w}\frac{2\sin^22\pi W
  w}{2\pi w}
  \biggr)
  \notag
  \\&=2\delta^{-1}V
  \bigl(
  (1-|w|)\sinc 4\pi W w
    +2W\hat{J}(w)\sinc^22\pi W w
  \bigr)
  \label{ExactDhata0}
\end{align}
with $w=\delta^{-1}u$ and $W=\delta V$, and where $\sinc u=(\sin
u)/u$ called also the \emph{cardinal sine}.
\begin{figure}[!h]
  \includegraphics[scale=1]{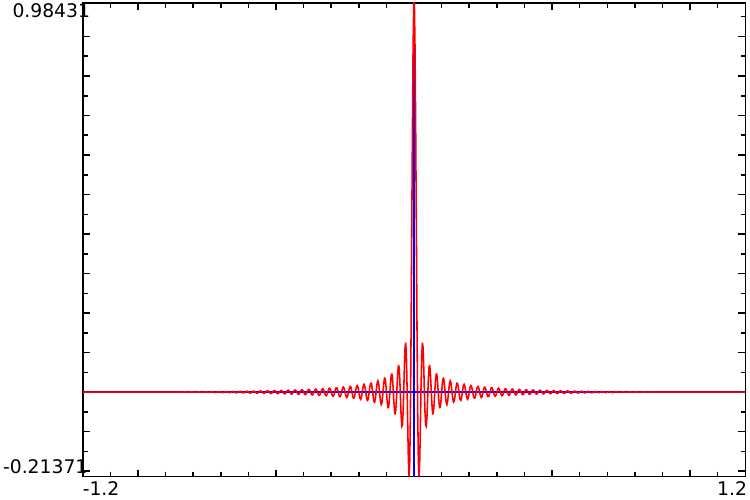}
  \caption{$\hat{D}_{\myalpha_0,\delta}(u)/(4V^2+2\delta^{-1}V)$ with
    $\delta V=40$}
\end{figure}

%%%%%%%%%%%%%%%%%%%%%%%%%%%%%%%%%%%%%%% 
\section{A  Harmonic Analysis divertimento}
%%%%%%%%%%%%%%%%%%%%%%%%%%%%%%%%%%%%%%%
\label{AQuad}
This section is somehow off-topic, but prepares to a problem we will
find later on.
We consider the quadratic form
\begin{equation}
  \label{defQ}
  Q((u_n))=
  \int_{-\delta}^\delta |S(\beta)|^2d\beta
  =\sum_{m,n}u_m\overline{u_n}\frac{\sin (2\pi(m-n)\delta)}{\pi (m-n)}.
\end{equation}
To be in accordance with other works, we assume that $N$ is an
integer.\footnote{Except that D. Slepian considers sequences $(u_n)_{0\le
    n\le N-1}$ while we have $(u_n)_{1\le
    n\le N}$. This explains several of the $-1$ the reader will see in
  the indices.}

%%%%%%%%%%%%%%%%%%%%%
\subsection{Some references on the eigenvectors / eigenvalues of \texorpdfstring{$Q$}{Q}}
%%%%%%%%%%%%%%%%%%%%%
The eigenvectors of this quadratic form are called the \emph{discrete
  prolate spheroidal sequences}, DPSS in short. Following D.~Slepian
in \index{Slepian@Slepian, D.}\cite{Slepian*77}, we write
\begin{equation}
  \label{eq:21}
  \sum_{m}\frac{\sin (2\pi(n-m)\delta)}{\pi (n-m)}
  v^{(k)}_{m-1}(N,\delta)=\lambda_k(N,\delta)v^{(k)}_{n-1}(N,\delta).
\end{equation}
These eigenvalues are normalized so that
\begin{equation}
  \label{eq:22}
  \sum_{n}v^{(k)}_{n-1}(N,\delta)^2=1,
\end{equation}
and
\begin{equation}
  \label{eq:23}
  \sum_{n}v^{(k)}_{n-1}(N,\delta)\ge 0,\quad
  \sum_{n}(N+1-2n)v^{(k)}_{n-1}(N,\delta)\ge 0.
\end{equation}
They are furthermore ordered in $k\in\{0,\cdots, N-1\}$ with
decreasing values of~$\lambda_k(N,\delta)$.
The reader will find numerous papers and numerics on these sequences.
(Voir \cite{Gruenbaum*81})

%%%%%%%%%%%%%%%%%%%%%
\subsection{A minimization problem}
%%%%%%%%%%%%%%%%%%%%%
\label{HA}
As it turns out, the problem will shall meet is to find the best
constant~$C(N,\delta)$ such that
\begin{equation}
  \label{question}
  \int_{-\delta}^\delta |S(\beta)|^2d\beta
  \ge C(N,\delta)|S(0)|^2.
\end{equation}
And we would even be able to restrict this question to sequences
$(u_n)$ that are non-negative, obtaining a constant $C_+(N,\delta)$.
%%%%%%%%%%%%%%%%%
\begin{lem}
  \label{minim}
  Let $(u_n)$ be a sequence of non-negative real numbers. With the
  notation of Lemma~\ref{Dmyalpha}, we have
  \begin{equation*}
    \int_{-\delta}^\delta
    |S(\beta)|^2d\beta
    \ge
    \frac{1}{\delta^{-1}A(\myalpha)+B(\myalpha)}
    \int_{-\infty}^{\infty}
    \Bigl|\sum_m u_m\myalpha(t-m)\Bigr|^2dt
  \end{equation*}
  where $S(\beta)=\sum_{n\le N}u_n e(n\beta)$.
  We also have
  \begin{equation*}
  N\int_{-\delta}^\delta
    |S(\beta)|^2d\beta
    \ge
    \biggl(1-\frac{2}{1+\sqrt{\delta N+1}}\biggr)
    |S(0)|^2.
  \end{equation*}
\end{lem}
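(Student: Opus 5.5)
The plan is to compare the integral $\int|\sum_m u_m\myalpha(t-m)|^2dt$ with $\int_{-\delta}^\delta|S|^2$ through the majorant $D_{\myalpha,\delta}$ of Lemma~\ref{Dmyalpha}, and then feed the result into Lemma~\ref{integraldecomposition}.

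For the first inequality, expand the left-hand side of the target:
\begin{equation*}
  \int_{-\infty}^{\infty}\Bigl|\sum_m u_m\myalpha(t-m)\Bigr|^2dt
  =\sum_{m,n}u_m u_n\,(\myalpha\star\myalpha)(m-n).
\end{equation*}
Since the $u_n$ are non-negative, the pointwise bound~\eqref{eq:8}, namely $(\myalpha\star\myalpha)(y)\le D_{\myalpha,\delta}(y)$, gives
\begin{equation*}
  \sum_{m,n}u_mu_n(\myalpha\star\myalpha)(m-n)\le\sum_{m,n}u_mu_nD_{\myalpha,\delta}(m-n).
\end{equation*}
Next write $D_{\myalpha,\delta}(y)=\int\hat D_{\myalpha,\delta}(x)e(xy)dx$. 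This is legitimate because $\hat D$ is bounded and supported on $[-\delta,\delta]$, so $D$ is its inverse transform. The right-hand side then becomes
\begin{equation*}
  \int_{-\delta}^{\delta}\hat D_{\myalpha,\delta}(x)|S(x)|^2dx.
\end{equation*}
By Lemma~\ref{Dmyalpha}, $\hat D_{\myalpha,\delta}\le\delta^{-1}A(\myalpha)+B(\myalpha)$ and $\hat D_{\myalpha,\delta}$ vanishes outside $[-\delta,\delta]$. Hence the quantity is at most $(\delta^{-1}A+B)\int_{-\delta}^\delta|S|^2$, which is the first claim.

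One subtle point needs checking here. The bound on $\hat D$ is an upper bound, but we must also know $\hat D\ge0$, or at least bound it by its absolute value. Lemma~\ref{BS} gives $|\hat C|\le\hat C(0)$, and integrating against $\mybeta\ge0$ yields $|\hat D|\le\delta^{-1}A+B$. Since $|S|^2\ge0$, this is enough.

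For the second claim, take $\myalpha=\myalpha_0=\1_{[-V,V]}$. Then $A=2V$, $B=4V^2$ and $\Sigma=2V$. Lemma~\ref{integraldecomposition}, dropping the non-negative integral term, gives
\begin{equation*}
  \int\Bigl|\sum_m u_m\myalpha_0(t-m)\Bigr|^2dt\ge\frac{4V^2}{N}\Bigl(1-\frac{2V}{N}\Bigr)|S(0)|^2 .
\end{equation*}
Combining this with the first part,
\begin{equation*}
  N\int_{-\delta}^\delta|S|^2\ge\frac{4V^2(1-2V/N)}{2V\delta^{-1}+4V^2}|S(0)|^2=\frac{1-2V/N}{1+1/(2V\delta)}|S(0)|^2 .
\end{equation*}
Here $V>0$ is free and real; Lemma~\ref{integraldecomposition} requires no integrality.

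It remains to optimize in $V$. Put $x=2V$, so the quantity to maximize is
\begin{equation*}
  f(x)=\frac{(1-x/N)\,\delta x}{\delta x+1}.
\end{equation*}
Setting the derivative to zero gives $(\delta x+1)^2=\delta N+1$, that is $\delta x=\sqrt{\delta N+1}-1$. Write $s=\sqrt{\delta N+1}$, so that $\delta N=s^2-1$. Then $x/N=(s-1)/(s^2-1)=1/(s+1)$ and $\delta x/(\delta x+1)=(s-1)/s$. This gives
\begin{equation*}
  f=\frac{s}{s+1}\cdot\frac{s-1}{s}=\frac{s-1}{s+1}=1-\frac{2}{1+s},
\end{equation*}
which is exactly the stated constant. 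We need $V\le N/2$ for the bound to be meaningful, and this holds since $x/N=1/(s+1)<1$.

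The main obstacle is justifying the Fourier step $\sum u_mu_nD(m-n)=\int\hat D|S|^2$. This requires $D$ to be genuinely the inverse transform of $\hat D$ as defined in~\eqref{defDhat}, that is, interchanging the $\lambda$-integral with the Fourier inversion, together with the absolute bound on $\hat D$ discussed above.
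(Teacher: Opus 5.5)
Your proposal is correct and follows the paper's argument, run in the opposite direction. You majorize $\myalpha\star\myalpha$ by $D_{\myalpha,\delta}$ using $u_n\ge0$, rewrite the result as $\int\hat D_{\myalpha,\delta}|S|^2$, and bound $\hat D_{\myalpha,\delta}$ by $\delta^{-1}A(\myalpha)+B(\myalpha)$ on its support $[-\delta,\delta]$; you then take $\myalpha=\myalpha_0$ in Lemma~\ref{integraldecomposition} and optimize in $V$ as the paper does (your $\delta x$ is the paper's $v=2\delta V$). Your worry about the sign of $\hat D_{\myalpha,\delta}$ is unnecessary, since the pointwise upper bound alone suffices once multiplied by $|S|^2\ge0$.
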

%%%%%%%%%%%%%%%%
A usual question in Harmonic Analysis is to show that the Fourier
transform cannot be too concentrated, and this is linked with the
Uncertainty Principle. It is not the place to detail such a huge area,
and we simply refer the reader to two papers, the first one~\cite{Logvinenko-Sereda*74} by
V.\,N.~Logvinenko \& Ju.\,F.~Sereda
and the second one~\cite{Nazarov*92} by F.\,L.~Nazarov.
We failed to find references concerning~$C(N,\delta)$ or~$C^+(N,\delta)$.

%%%%%%%%%%%%%%
\begin{proof}
    We use
  Lemma~\ref{Dmyalpha}.
  which simplifies in:
  \begin{equation*}
    \frac{\hat{D}_{\myalpha,\delta}(\beta)}{\delta^{-1}A(\myalpha)+B(\myalpha)}
    \le
    \1_{[-\delta,\delta]}(\beta)
  \end{equation*}
  This gives us
  \begin{align*}
    \int_{-\delta}^{\delta}|
    S(\beta)
    |^2d\beta
    \ge&
    \int_{-\delta}^{\delta}|S(\beta)|^2
    \frac{\hat{D}_{\myalpha,\delta}(\beta)d\beta}{\delta^{-1}A(\myalpha)+B(\myalpha)}
    .
  \end{align*}
  We may extend the integral over $\mathbb{R}$ by positivity obtaining
\begin{equation*}
  \int_{-\delta}^{\delta}|S(\beta)|^2d\beta\ge
    \frac{1}{\delta^{-1}A(\myalpha)+B(\myalpha)}
    \sum_{m,n}u_m\overline{u_n}
    D_{\myalpha,\delta}(m-n)
  \end{equation*}
  on using Lemma~\ref{Dmyalpha}. As $(u_n)$ is assumed to be
  non-negative, we may bound below $D_{\myalpha,\delta_q}(m-n)$ by
  $\int_{-\infty}^{+\infty}\myalpha (t)\myalpha(t-y)dt$, getting
  \begin{equation*}
    \int_{-\delta}^{\delta}|S(\beta)|^2d\beta
    \ge
    \frac{1}{\delta^{-1}A(\myalpha)+B(\myalpha)}
    \int_{-\infty}^{\infty}
    \Bigl|\sum_m u_m\myalpha(t-m)\Bigr|^2dt.
  \end{equation*}
  This establishes the first part of the lemma. Let us now select
  $\myalpha=\myalpha_0$ and use Lemma~\ref{integraldecomposition}. We
  infer that, since $\Sigma^2=B(\myalpha)$,
  \begin{equation*}
    N\int_{-\delta}^{\delta}|S(\beta)|^2d\beta
    \ge
    \frac{B(\myalpha_0)}{\delta^{-1}A(\myalpha_0)+B(\myalpha_0)}
    \biggl(1-\frac{2V}{N}\biggr)|S(0)|^2
    \ge
    \frac{2\delta V}{1+2\delta V}
    \biggl(1-\frac{2\delta V}{\delta N}\biggr)|S(0)|^2.
  \end{equation*}
  With $v=2\delta V$ and $w=\delta N$, we readily find that the optimal
  optimal choice for $V$ is given by $v^2+2v-w=0$,
  i.e. $v=-1+\sqrt{1+w}$. The resulting constant is
  \begin{equation*}
    \frac{v(w-v)}{(v+1)w}
    =
    \frac{v^2(v+1)}{(v+1)w}
    =
    \frac{w+2-2\sqrt{w+1}}{w}
    =
    1-\frac{2}{1+\sqrt{w+1}}.
  \end{equation*}
  The proof is complete.
\end{proof}
%%%%%%%%%%%%%%

In the notation above, the last part of Lemma~\ref{minim} implies that
  \begin{equation}
    \label{minCplus}
    C_+(N,\delta)\ge \biggl(1-\frac{2}{\sqrt{\delta N}}\biggr)/N.
  \end{equation}
  It is easy to show that (when $1/2\ge \delta$)
  \begin{equation}
  \label{eq:24}
  1/N\ge C(N,\delta)\ge C_+(N,\delta).
\end{equation}
This is simply proved by noticing that
\begin{equation*}
  N =
  \int_{-1/2}^{1/2} \,\Bigl|\sum_n e(n\beta)\Bigr|^2d\beta
  \ge
  \int_{-\delta}^\delta \,\Bigl|\sum_n e(n\beta)\Bigr|^2d\beta
  \ge C(N,\delta) \Bigl|\,\sum_n 1\,\Bigr|^2.
\end{equation*}
%%%%%%%%%%%%%%%%%%%%%
\subsection{Numerical optimization with positivity conditions}
%%%%%%%%%%%%%%%%%%%%%
We may consider this problem with Kuhn-Tucker conditions, i.e. examine
the function
\begin{equation}
  \label{eq:16}
  H_{N,\delta}((u_n),\lambda)
  =2\delta\sum_{m,n}u_mu_n\sinc (2\pi(m-n)\delta)
  -\sum_{n}\lambda_nu_n
  +\mu\biggl(\sum_{n}u_n-1\biggr).
\end{equation}
For every local minima of $2\delta\sum_{m,n}u_mu_n\sinc (2\pi(m-n)\delta)$
under the conditions $-u_n\le 0$ for every~$n$ and $\sum_{n}u_n=1$,
there exists\footnote{In our present problem, this necessary condition
  can furthermore be proved to be sufficient.}
for every $n$ some $\lambda_n\ge0$ and a real $\mu$ such that
\begin{equation}
  \label{eq:18}
  \left\{
    \begin{aligned}
      &\forall m,\quad
      2\delta\sum_{n}u_n\sinc (2\pi(m-n)\delta)= \lambda_m -\mu,
      \\
      &
        \forall m,\quad
        u_m\ge0\quad\text{and}\quad \lambda_m u_m=0,
      \\
      &
      \sum_{n}u_n=1.
    \end{aligned}
  \right.
\end{equation}
In which case the minimum is readily checked to be~$-\mu$.
This means that there exists a subset $I\subset\{1,\cdots,N\}$ of
indices that are such that $u_m=0$ and that $\lambda_m=0$ when
$m\notin I$. Since in each case, we get a linear system of $N-|I|+1$ unknowns
(the $+1$ is for the $\mu$-variable), with $N-|I|+1$ equations.
Such a system is likely to be non-degenerate (its determinant is a
real analytic function of $\delta$, so it cannot vanish on any
non-trivial interval without vanishing everywhere; this implies that
we need only move $\delta$ by a tiny amount to get a non-zero
determinant).

We get the next table for $NC_+(N,\delta)$:
\begin{tabular}{|c|c|c|}
  \hline
  $N$&$\delta=0.2$&$\delta=0.1$\\
  \hline
  10&0.8488...&0.7105...\\
  11&0.8593...&0.7333...\\
  12&0.8567...&0.7474...\\
  13&0.8608...&0.7471...\\
  14&0.8794...&0.7458...\\
  15&0.8882...&0.7426...\\
  16&0.8896...&0.7563...\\
  17&0.8852...&0.7773...\\
  18&0.8973...&0.7943...\\
  19&0.9063...&0.8063...\\
  20&0.9098...&0.8143...\\
  \hline
\end{tabular}

\smallskip

And a table for $NC_+(N,\delta)/(1-2/\sqrt{\delta N})$:
\smallskip

{\small
  % \begin{tabular}{|c|c|c|c|c|c|}
  % \hline
  % $N$&14&15&16&17&18\\
  % \hline
  % $\delta=0.3$&38.7932...&16.3200...&10.8103...&8.2590...&6.7719...\\
  % \hline
  % \end{tabular}

 \hfill \begin{tabular}{|c|c|c|c|c|}
  \hline
  $N$&19&20&21&22\\
  \hline
  $\delta=0.3$&5.8499...&5.1885...21&4.6818...&4.3113...\\
  \hline
        \end{tabular}    
\par}
\bigskip

Sadly enough, this last data is barely significant as $N\delta$ is not
large enough. A major slowing factor is that we have to investigate
every possible subset of indices.
One may remark that when an admissible optimum (i.e. a minimum with
non-negative coordinates) has been found for a given subset, it is not
required to investigate the subsets of this subset. This remark does
not speed the process, as it turns out that checking that a given subset 
does not belong to the list of cleared ones takes too much time.

%\clearpage

\bigskip
%\noindent
Here is the Pari-GP script used to compute the above tables.

\vspace*{-4pt}
{\small
\begin{verbatim}
{getmin(listindices, delta) =
   my(locN = length(listindices), mymat = matrix(locN + 1),
      vecimage = vector(locN + 1), res, isok = 1, firstnonzeron = 1);

   \\ Initialization:
   for(m = 1, locN, 
      for(n = m, locN,
         mymat[m, n] = sinc(2*Pi*(listindices[m]-listindices[n])*delta)*2*delta;
         mymat[n, m] = mymat[m, n]));
   for(m = 1, locN, mymat[m, locN + 1] = 1);
   for(n = 1, locN, mymat[locN + 1, n] = 1);

   for(n = 1, locN, vecimage[n] = 0);
   vecimage[locN + 1] = 1;

   \\ Get possible minimum:
   res = matsolve(mymat, vecimage~);

   \\ Analyze the result:
   while(res[firstnonzeron] == 0, firstnonzeron++);
   if(res[firstnonzeron] < 0, res = -res);
   for(n = firstnonzeron + 1, locN,
      if(res[n] < 0, isok = 0; break,));
   return([isok, - res[locN + 1]]);}
   
{work(N, delta, DoTell = 1) =
   my(locres, res = 1);
   forsubset(N, listindices,
      if(length(listindices) == 0,,
         locres = getmin(listindices, delta);
         if(locres[1] == 1,
            if(DoTell == 1,
              print("Minimum at ", listindices, " = ", locres[2]),);
            res = min(locres[2], res),)));
   return(res);}
   
\end{verbatim}

  \par}

%%%%%%%%%%%%%%%%%%%%%%%%%%
\section{Base Camp, proof of Theorem~\ref{WeightedSieve}}
%%%%%%%%%%%%%%%%%%%%%%%%%%
Here is the first key point of the method we propose.
%%%%%%%%%%%
\begin{lem}
  \label{BaseCamp}
  The function $\myalpha$ being as in Lemma~\ref{Formyalpha}, and the
  hypotheses of Theorem~\ref{WeightedSieve} being met, we have
  \begin{multline*}
    \biggl(1-\frac{2V}{N}\biggr)
    \sum_{\substack{q\le Q}}
    \frac{g(q)}{1+q(q+Q)A(\myalpha)/B(\myalpha)}
    \Bigr|\sum_m u_m\Bigl|^2/N
    \\\le
    \sum_{\substack{q\le Q}}
    \frac{g(q)}{B(\myalpha)+q(q+Q)A(\myalpha)}
    \int_{-\infty}^\infty\Bigl|\sum_{m}u_m\myalpha(t-m)\Bigr|^2dt
    \le \sum_{n}|u_n|^2.
  \end{multline*}
\end{lem}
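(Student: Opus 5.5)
The first inequality is the easy one. I would apply Lemma~\ref{integraldecomposition} with $\Sigma=\int\myalpha=\|\myalpha\|_1$, so that $|\Sigma|^2=B(\myalpha)$; note that $\myalpha$ need not be supported on $[-V,V]$ in general, but in the intended case $\myalpha=\myalpha_0$ it is. Discarding the non-negative second integral gives
\begin{equation*}
\int_{-\infty}^\infty\Bigl|\sum_m u_m\myalpha(t-m)\Bigr|^2dt\ge \frac{B(\myalpha)}{N}\Bigl(1-\frac{2V}{N}\Bigr)\Bigl|\sum_m u_m\Bigr|^2 .
\end{equation*}
Then I divide by $B(\myalpha)+q(q+Q)A(\myalpha)$, multiply by $g(q)$ and sum over $q\le Q$. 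The factor $B/(B+q(q+Q)A)$ is exactly $1/(1+q(q+Q)A/B)$, which yields the left inequality.

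For the second inequality, fix $q\le Q$ and set $\delta_q=1/(q(q+Q))$. For each reduced $a\bmod q$, apply the first part of Lemma~\ref{minim} to the shifted sequence $u_n e(na/q)$, whose trigonometric polynomial is $S(a/q+\beta)$. Here is a caveat: Lemma~\ref{minim} was stated for non-negative coefficients, because the lower bound $D_{\myalpha,\delta}(m-n)\ge(\myalpha\star\myalpha)(m-n)$ needs positivity. So I would not twist the coefficients. Instead, I rerun its proof directly:
\begin{equation*}
\int_{-\delta_q}^{\delta_q}|S(a/q+\beta)|^2d\beta\ge\frac{1}{\delta_q^{-1}A+B}\int_{\mathbb{R}}|S(a/q+\beta)|^2\hat D_{\myalpha,\delta_q}(\beta)\,d\beta .
\end{equation*}

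Next I sum over reduced $a$ and expand. This gives
\begin{equation*}
\frac{1}{\delta_q^{-1}A+B}\sum_{m,n}u_m u_n\, c_q(m-n)\,D_{\myalpha,\delta_q}(m-n).
\end{equation*}
Ramanujan sums are not positive, so a pointwise bound fails here. I would rather combine with the local lower bound, Theorem~\ref{L2S}, generalised: apply its identity~\eqref{L1S} to the function $t\mapsto\sum_m u_m\myalpha(t-m)$. More precisely, for each real $t$, the sequence $m\mapsto u_m\myalpha(t-m)$ is non-negative and still has support on $(\Kcal_q)$. Theorem~\ref{L2S} applied to it gives
\begin{equation*}
\sum_{a\mode q}\Bigl|\sum_m u_m\myalpha(t-m)e(ma/q)\Bigr|^2\ge g(q)\Bigl|\sum_m u_m\myalpha(t-m)\Bigr|^2 .
\end{equation*}
Integrating in $t$ and using Parseval (as in Lemma~\ref{Switch}), the left side becomes $\sum_a\int|S(a/q+x)|^2|\hat\myalpha(x)|^2dx$.

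So the real task is to compare $|\hat\myalpha|^2$ with $\hat D_{\myalpha,\delta_q}$. Equivalently, I would run the argument with the non-negative sequence at each fixed $t$, inserting the majorant. Write $T_t(x)=\sum_m u_m\myalpha(t-m)e(mx)$. Since $D_{\myalpha,\delta}\ge\myalpha\star\myalpha$ and everything is non-negative once the twist is moved into the Fourier side, I aim for
\begin{equation*}
(\delta_q^{-1}A+B)\sum_a\int_{-\delta_q}^{\delta_q}|S(a/q+\beta)|^2d\beta\ge\sum_a\int_{-\infty}^\infty\Bigl|\sum_m u_m e(ma/q)\myalpha(t-m)\Bigr|^2dt ,
\end{equation*}
and then apply Theorem~\ref{L2S} for each $t$.

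The justification of this display is the main obstacle. I would try to prove it by writing $\hat D$ as an average over $\lambda$ of $\hat C_{[-\lambda,\lambda],\delta}$, whose inverse transforms majorise indicators. Failing that, I would reduce to $\myalpha=\myalpha_0$, where the paper's explicit computations are available.

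Once this display holds, the constant works out as follows. Multiplying by $\delta_q$ gives $\delta_q^{-1}A+B=q(q+Q)A+B$, and since $\delta_q\le 1/(2q^2)$ the Farey arcs are the ones in Lemma~\ref{Step2b}. Dividing by $B+q(q+Q)A$, multiplying by $g(q)$, summing over $q\le Q$, and applying Lemma~\ref{Step2b} then bounds the middle term by $\sum_n|u_n|^2$.
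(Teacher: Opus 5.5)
Your first inequality is handled exactly as in the paper. The second inequality, however, has a genuine gap. The display you single out as the main obstacle,
\begin{equation*}
(\delta_q^{-1}A+B)\sum_{a\mode q}\int_{-\delta_q}^{\delta_q}\Bigl|S\Bigl(\frac aq+\beta\Bigr)\Bigr|^2d\beta\ge\sum_{a\mode q}\int_{-\infty}^{\infty}\Bigl|\sum_m u_me(ma/q)\myalpha(t-m)\Bigr|^2dt,
\end{equation*}
is Lemma~\ref{minim} for the complex sequences $(u_me(ma/q))_m$. That is precisely the case where the pointwise bound $D_{\myalpha,\delta}\ge\myalpha\star\myalpha$ cannot be used, and the display is false in general.

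A simple counterexample: take every $\Kcal_p=\Z{p}$, $u_n=\1_{[1,N]}(n)$, $\myalpha=\myalpha_0$ with $V=\frac12$ (so $A=B=1$), and $q=2$. Then $|S(\frac12+\beta)|\le 1/\cos\pi\beta\le 2/\sqrt3$ on $|\beta|\le\delta_2$, so the left side is at most $4$, while the right side equals $N$. A similar failure seems to occur even with $g(q)>0$. A peak-by-peak asymptotic computation (not a rigorous proof) for $q=Q=5$, $\Kcal_5=\{1,2,3,4\}$, $u_n=\1_{9\mid n,\,5\nmid n}$ and $V=45/4$ gives the two sides as roughly $3.2N$ and $5.6N$ as $N\to\infty$. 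So neither averaging $\hat C_{[-\lambda,\lambda],\delta}$ over $\lambda$ nor specialising to $\myalpha_0$ will rescue it.

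The root cause is that you apply Theorem~\ref{L2S} on the physical side, at fixed $t$. This leaves the weight $|\hat\myalpha|^2$, which is not carried by the arcs. It then forces a majorant comparison for twisted polynomials, where the Ramanujan sums destroy positivity.

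The missing idea is to apply Theorem~\ref{L2S} on the frequency side instead. For fixed $\beta$, the sequence $(u_ne(n\beta))_n$ still has support on $(\Kcal_q)$. The proof of Theorem~\ref{L2S} (the linear identity~\eqref{L1S} followed by Cauchy's inequality) never uses positivity, so
\begin{equation*}
\sum_{a\mode q}\Bigl|S\Bigl(\frac aq+\beta\Bigr)\Bigr|^2\ge g(q)|S(\beta)|^2\qquad(\beta\in\mathbb{R}).
\end{equation*}
Integrating over $|\beta|\le\delta_q$ bounds the arc sum below by $g(q)\int_{-\delta_q}^{\delta_q}|S(\beta)|^2d\beta$. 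This is an \emph{untwisted} integral, to which Lemma~\ref{minim} applies directly because $(u_n)$ is non-negative. You get the lower bound $\frac{g(q)}{\delta_q^{-1}A+B}\int|\sum_mu_m\myalpha(t-m)|^2dt$, and summing over $q\le Q$ with Lemma~\ref{Step2b} finishes the proof. This is the paper's argument: positivity is used once, on $(u_n)$ itself, and no Ramanujan sums ever appear.
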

%%%%%%%%%%%
This inequality leads to the classical Brun-Titchmarsh inequality, i.e. with
the factor $2+o(1)$ if we are to choose~$V=N/\log N$,
$Q=\sqrt{V}/\log V$. The novelty is that we have expressed the
error term as an integral of a positive quantity over a set of density~1.

%%%%%%%%%%%%%%%%%
\begin{proof}
  We start from Lemma~\ref{Step2b} and set $\delta_q^{-1}=q(q+Q)$.
  We find, by using Lemma~\ref{L2LB}, that:
  \begin{equation*}
    \sum_{a\mode q}
    \int_{-\delta_q}^{\delta_q}\biggl|
    S\biggl(\frac{a}{q}+\beta\biggr)
    \biggr|^2
    d\beta
    \ge
      \int_{-\delta_q}^{\delta_q}| S(\beta)
    |^2
    d\beta.
  \end{equation*}
  As $(u_n)$ is assumed to be
  non-negative, Lemma~\ref{minim} applies to this quantity, getting
  \begin{equation*}
    \sum_{a\mode q}
    \int_{-\delta_q}^{\delta_q}\biggl|
    S\biggl(\frac{a}{q}+\beta\biggr)
    \biggr|^2
    d\beta
    \ge
    \frac{g(q)}{\delta_q^{-1}A(\myalpha)+B(\myalpha)}
    \int_{-\infty}^{\infty}
    \bigl|\sum_m u_m\myalpha(t-m)\bigr|^2dt.
  \end{equation*}
  We conclude this segment of the proof by appealing to
  Lemma~\ref{integraldecomposition} and on noticing that
  $\Sigma^2=B(\myalpha)$. We sum over $q\le Q$ and appeal to
  Lemma~\ref{Step2b} to bound above the left-hand side.  Our lemma
  readily follows.
\end{proof}
%%%%%%%%%%%%%%%%%

%%%%%%%%%%%%%%
\begin{proof}[Proof of Theorem~\ref{WeightedSieve}]
  We simply apply Lemma~\ref{BaseCamp} to $\myalpha=\myalpha_0$ and recall~\eqref{eq:10}.
\end{proof}
%%%%%%%%%%%%%%

%%%%%%%%%%%%%%%%%% 
\section{Using the localization}
%%%%%%%%%%%%%%%%%%
%%%%%%%%%%%
\begin{lem}
  \label{integraldecompositionBis}
  Let $(u_m)_{m\le N}$ be a sequence of complex numbers. We have 
  \begin{multline*}
    \frac{N}{4V^2(1-2V/N)}\int_{-\infty}^\infty
    \Bigl|\sum_{|m-t|\le V}u_m\Bigr|^2
    dt
    \\
    \ge
    |S(0)|^2
    +
    \biggl|S(\beta)
    -
    \frac{S(0)}{N}\int_{-V}^{N+V}\mkern-15mu e(\beta t)dt
    +\Ocal^*(2\pi\beta V S(0))\biggr|^2.
  \end{multline*}
\end{lem}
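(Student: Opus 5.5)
Set $\myalpha=\myalpha_0=\1_{[-V,V]}$, so that $\sum_{|m-t|\le V}u_m=\sum_m u_m\myalpha_0(t-m)$, and write $f(t)$ for this function. By Lemma~\ref{integraldecomposition} with $\Sigma=2V$ (using $\Sigma^2=B(\myalpha_0)=4V^2$ from~\eqref{eq:10}) we get
\begin{equation*}
  \int_{-\infty}^\infty|f(t)|^2dt
  =\frac{4V^2}{N}\Bigl(1-\frac{2V}{N}\Bigr)|S(0)|^2
  +\int_{-V}^{N+V}\Bigl|f(t)-\frac{2VS(0)}{N}\Bigr|^2dt .
\end{equation*}
Multiplying by $N/(4V^2(1-2V/N))$ produces the term $|S(0)|^2$ exactly. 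It remains to bound the second term from below by the squared modulus in the statement. The factor $N/(1-2V/N)\ge N$ can be discarded (the statement may even lose some of it, but a factor $\ge1$ is enough).

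For the error term I would use Cauchy--Schwarz against the weight $e(\beta t)$ on $[-V,N+V]$, an interval of length $N+2V$:
\begin{equation*}
  \Bigl|\int_{-V}^{N+V}\Bigl(f(t)-\frac{2VS(0)}{N}\Bigr)e(\beta t)\,dt\Bigr|^2
  \le (N+2V)\int_{-V}^{N+V}\Bigl|f(t)-\frac{2VS(0)}{N}\Bigr|^2dt .
\end{equation*}
Next I compute the integral on the left. Since $f$ is supported in $[-V,N+V]$, we have $\int f(t)e(\beta t)dt=\sum_m u_m\int_{m-V}^{m+V}e(\beta t)dt$. Also $\int_{m-V}^{m+V}e(\beta t)dt=e(\beta m)\int_{-V}^{V}e(\beta s)ds=e(\beta m)\bigl(2V+\Ocal^*(2\pi|\beta|V\cdot 2V)\bigr)$, because $|e(\beta s)-1|\le 2\pi|\beta||s|$. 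Hence, for non-negative $u_m$ (or with $\sum|u_m|$ in general, which is $S(0)$ in the setting of the paper),
\begin{equation*}
  \int f(t)e(\beta t)dt=2V\bigl(S(\beta)+\Ocal^*(2\pi\beta V S(0))\bigr),
\end{equation*}
while the constant term contributes $-\frac{2VS(0)}{N}\int_{-V}^{N+V}e(\beta t)dt$. Factoring out $2V$, the left side equals $4V^2\bigl|S(\beta)-\frac{S(0)}{N}\int_{-V}^{N+V}e(\beta t)dt+\Ocal^*(2\pi\beta VS(0))\bigr|^2$.

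Combining the two displays, the second term is at least
\begin{equation*}
  \frac{N}{(N+2V)(1-2V/N)}\,\bigl|\cdots\bigr|^2 .
\end{equation*}
The prefactor is $\ge1$ because $(1+2V/N)(1-2V/N)=1-4V^2/N^2\le1$, which is exactly what is needed. The main point to watch is this normalisation: the Cauchy--Schwarz length $N+2V$ has to be absorbed by the factor $1/(1-2V/N)$, and the calculation above shows that it is. The rest is bookkeeping with the $\Ocal^*$ term, whose size $2\pi|\beta|V\sum|u_m|$ comes from the Taylor bound on $e(\beta s)$.
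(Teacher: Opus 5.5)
Your proof is correct and follows essentially the same route as the paper. You apply Lemma~\ref{integraldecomposition} with $\Sigma=2V$, then Cauchy's inequality against $e(\beta t)$ on $[-V,N+V]$, and absorb the length $N+2V$ using $N/((N+2V)(1-2V/N))\ge 1$, a check the paper leaves implicit. The one small difference is that you compute $\int f(t)e(\beta t)\,dt$ exactly before applying $|e(\beta s)-1|\le 2\pi|\beta||s|$, whereas the paper inserts that bound pointwise inside $\Delta(N,V)$. As in the paper, the argument needs $V<N/2$ and $\sum_m|u_m|=S(0)$ (that is, non-negative $u_m$), which you rightly flag.
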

%%%%%%%%%%%

%%%%%%%%%%%%%%
\begin{proof}
Let us notice that $S(0)=\sum_m u_m=\sum_m |u_m|$. We set
%%%%%%%%%%%%%
\begin{equation}
  \Delta(N,V)
  =\int_{-V}^{N+V}
  \biggl|\sum_{|m-t|\le V}u_m-\frac{2VS(0)}{N}\biggr|^2
  dt.
\end{equation}  
%%%%%%%%%%%%% 
We swiftly obtain:
%%%%%%%%%%%%% 
\begin{align*}
  \Delta(N,V)    
  &=
    \int_{-V}^{N+V}
    \biggl|\sum_{|m-t|\le V}u_m
    \bigl(e((m-t)\beta)+\Ocal^*(2\pi\beta V)\bigr)-\frac{2V S(0)}{N}\biggr|^2
  dt
  \\&=
    \int_{-V}^{N+V}
    \biggl|\sum_{|m-t|\le V}u_m
    \bigl(e(m\beta)+\Ocal^*(2\pi\beta V)\bigr)-\frac{2V S(0)e(\beta t)}{N}\biggr|^2
  dt
\end{align*}
We may then use Cauchy's inequality (in reverse) to infer that
\begin{align*}
(N+2V)\Delta(N,V)&\ge
  \biggl|\int_{-V}^{N+V}
  \biggl(
    \sum_{|m-t|\le V}u_m
    \bigl(e(m\beta)+\Ocal^*(2\pi\beta V)\bigr)-\frac{2V S(0)e(\beta
  t)}{N}\biggr)dt\biggr|^2
  \\&\ge
  \biggl|2V\sum_{m}u_m
  \bigl(e(m\beta)+\Ocal^*(2\pi\beta V)\bigr)
  -
  \frac{2V S(0)}{N}\int_{-V}^{N+V}e(\beta
  t)dt\biggr|^2
  \\&\ge
  \biggl|2VS(\beta)+
  \Ocal^*(2\pi\beta V^2S(0))
  -
  \frac{2V S(0)}{N}\int_{-V}^{N+V}e(\beta
  t)dt\biggr|^2
  \\&\ge
  4V^2\biggl|S(\beta)
  -
  \frac{S(0)}{N}\biggl(\int_{-V}^{N+V}
  e(\beta t)dt+\Ocal^*(2\pi\beta V N)\biggr)\biggr|^2.
\end{align*}
%%%%%%%%%%%%%
We finally only have to plug this inequality in
Lemma~\ref{integraldecomposition}, and to simplify the expression to get
the present lemma.
\end{proof}
%%%%%%%%%%%%%%%

%%%%%%%%%%
\begin{proof}[Proof of Theorem~\ref{conditional1}]
  We start from Theorem~\ref{WeightedSieve} and minorize the left-hand
  side through Lemma~\ref{integraldecompositionBis}. We obtain, when $V<N/2$,
  \begin{equation*}
    |S(0)|^2
    +
    \biggl|S(\beta)
    -
    \frac{S(0)}{N}\int_{-V}^{N+V}\mkern-15mu e(\beta t)dt
    +\Ocal^*(2\pi\beta V S(0))\biggr|^2
    \le \frac{N\|S\|_2^2}{2V(1-2V/N)L^*(Q,V)}.
  \end{equation*}
  Shuffling the terms around gives the next bound
  \begin{equation*}
    \biggl|\frac{S(\beta)}{S(0)}
    -
    \int_{-V}^{N+V}\mkern-15mu e(\beta t)\frac{dt}{N}
    \biggr|
    \le \sqrt{\frac{N\|S\|_2^2/S(0)^2}{2V(1-2V/N)L^*(Q,V)}-1}
    +2\pi |\beta| V.
  \end{equation*}
  In practice, the modification from $L$ of~\eqref{defL} to $L^*$
  of~\eqref{defLstar} allows to raise~$Q$ from~$\sqrt{V}/(\log V)^A$
  for some positive~$A$ to some multiple of~$\sqrt{V}$. But since~$V$
  is going to already be of the shape $N/(\log N)^A$, since saving is
  often quantitative only. It is clearer to use the bound
  \begin{equation}
    \label{eq:15}
    2V L^*(Q,V)\ge \frac{2V}{2V+2Q^2}L(Q).
  \end{equation}
  This leads to the bound
  \begin{equation*}
    \biggl|\frac{S(\beta)}{S(0)}
    -
    \int_{-V}^{N+V}\mkern-15mu e(\beta t)\frac{dt}{N}
    \biggr|
    \le \sqrt{\frac{1+Q^2/V}{1-2V/N}\frac{N\|S\|_2^2/L(Q)}{S(0)^2}-1}
    +2\pi |\beta| V.
  \end{equation*}
  This inequality will be useful when $S(0)^2$ is close
  to~$N\|S\|_2^2/L(Q)$. So let us continue with the mild hypothesis of
  the theorem, namely
  \begin{equation*}
    2S(0)^2\ge N\|S\|_2^2/L(Q).
  \end{equation*}
  On using $\sqrt{a+b}\le \sqrt{a}+\sqrt{b}$ for non-negative $a$ and
  $b$, and on assuming further that $V\le 4N$ to simplify the
  outcome,
  we reach
  \begin{equation*}
    \biggl|\frac{S(\beta)}{S(0)}
    -
    \int_{-V}^{N+V}\mkern-15mu e(\beta t)\frac{dt}{N}
    \biggr|
    \le \sqrt{\frac{N\|S\|_2^2/L(Q)}{S(0)^2}-1}
    +\frac{2Q}{\sqrt{V}}+2\pi |\beta| V.
  \end{equation*}
  This ends the proof.
\end{proof}
%%%%%%%%%%

%%%%%%%%%%%%%
\begin{proof}[Proof of Corollary~\ref{Manyps}]
  We find classically, in this case, we have $L(Q)\ge \log Q$, so we may use this value. We
  select $V=N/(\log N)^{B+1}$  and $Q=\sqrt{N}/(\log N)^{\frac12B+1}$. We find
  that
  \begin{equation*}
    \frac{N}{L}
    \le
    \frac{2}{\log N -(B+2)\log\log N}
    .
  \end{equation*}
  Notice that $Z\ge N/\log N$ provided that $N$ is large enough in
  terms of~$C$. On adopting an obvious notation and with $y=(\log\log
  N)/\log N$, Theorem~\ref{conditional1} gives us
  \begin{align*}
    \biggl|\frac{S(\beta)}{Z}
    -
    \int_{-V}^{N+V}\mkern-15mu e(\beta t)\frac{dt}{N}
    \biggr|
    &\le
      \sqrt{\frac{1+Cy}{1-(B+2)y}-1}
      +\frac{2}{\sqrt{\log N}}+\frac{7 |\beta|N}{(\log N)^{B+1}}
    \\&\le
      \sqrt{\frac{(B+2+C)y}{1-(B+2)y}}
      +\frac{2}{\sqrt{\log N}}+\frac{7}{\log N}.
  \end{align*}
  We deduce from this inequality that
  \begin{equation*}
    \biggl|\frac{S(\beta)}{Z}
    -
    \int_{0}^{N} e(\beta t)\frac{dt}{N}
    \biggr|
    \le 2 \sqrt{\frac{(B+C+1)\log\log N}{\log N}}
  \end{equation*}
  provided that $1-(B+2)y\ge 1/2$ and $N$ is large enough
  (independently of the choice of~$C$ and/or of~$A$)
  The reader will swiftly complete the proof from there.
\end{proof}
%%%%%%%%%%%%%

%%%%%%%%%%%%%%%
\begin{proof}[Proof of Theorem~\ref{Nearby}]
  We start from Lemma~\ref{BaseCamp} with $\myalpha=\myalpha_0$ and
  express the integral by Lemma~\ref{Switch} to get
  \begin{equation*}
    \frac{L^*}{2V}\int_{-1/2}^{1/2}|S(\beta)|^2
    \biggl|\frac{\sin \pi (2V-1)\beta}{\sin\pi\beta}\biggr|^2d\beta
    \le S(0)^2.
  \end{equation*}
  We finally appeal to Lemma~\ref{dec2}.
\end{proof}
%%%%%%%%%%%%%%%

%%%%%%%%%%%%%
\begin{proof}[Proof of Corollary~\ref{ManypsL20}]
  Let us select $V=[N/(\log N)^A]$ and $Q^2=N/(\log N)^{2A}$ in Theorem~\ref{Nearby} and use for
  $u_n$ the characteristic function of the set we are counting.
  The large sieve gives us the a-priori bound $Z\ll N/(\log
  N)^\kappa$, and therefore
  \begin{equation*}
    \frac{|S(0)|^2}{N}\frac{2V}{N}\ll \frac{Z}{(\log N)^{\kappa+A}}.
  \end{equation*}
  Furthermore, we have
  \begin{equation*}
    2VL^*(Q,V)\ge L(Q)\frac{2V}{2V+2Q^2}
  \end{equation*}
  so that we reach
  \begin{multline*}
    \int_{-(\log N)^A/N}^{(\log N)^A/N}
    \biggl|S(\beta)-\frac{Z}{N}
    \sum_{-2V< n< N+2V}\mkern-20mu e(n\beta)\biggr|^2d\beta
    \\
    \le \biggl(\frac{N}{L(Q)}-Z
    \biggr)\frac{Z}{N}+\Ocal\biggl(\frac{Z}{(\log N)^{\kappa+A}}\biggr).
  \end{multline*}
  We next shorten the summation in $n$ by appealing to the inequality
  \begin{equation*}
    \biggl|S(\beta)-\frac{Z}{N}
    \sum_{n\le N} e(n\beta)\biggr|^2
    \le
    2\biggl|S(\beta)-\frac{Z}{N}
    \sum_{-2V< n< N+2V}\mkern-20mu e(n\beta)\biggr|^2
    +
    2\biggl|\frac{Z}{N}
    \sum_{\substack{-2V< n< N+2V\\ n\notin[1,N]}}\mkern-20mu e(n\beta)\biggr|^2.
  \end{equation*}
  Consequently, we get
  \begin{multline*}
    \int_{-(\log N)^A/N}^{(\log N)^A/N}
    \biggl|S(\beta)-\frac{Z}{N}
    \sum_{ n\le N}e(n\beta)\biggr|^2d\beta
    \\
    \le 2\biggl(\frac{N}{L(Q)}-Z
    \biggr)\frac{Z}{N}+
    \Ocal\biggl(\frac{Z}{(\log N)^{\kappa+A}}
    +
    \frac{Z^2V}{N^2}
    \biggr).
  \end{multline*}
  We may replace $Q$ by $\sqrt{N}$ by noticing that
  \begin{equation*}
    \frac{N}{L(Q)}-\frac{N}{L(\sqrt{N})}=\frac{N(\log\log N)}{(\log N)^{\kappa+1}}
  \end{equation*}
  from which the lemma follows readily.
\end{proof}
%%%%%%%%%%%%%

%%%%%%%%%%%%%%%%%% 
\section{Additional notes}
%%%%%%%%%%%%%%%%%%
The optimality of the present proof may be
questioned, at three stations: the local lower bound provided by
Theorem~\ref{L2S}, the lower bound in Lemma~\ref{Step2b} and the replacement
of the characteristic function $\1_{[-\delta_q,\delta_q]}(\alpha-a/q)$
of the arc
$[a/q-\delta_q,a/q+\delta_q]$ by the function
$F_q(\alpha-a/q)=\hat{D}_{[0,N],\delta_q}(\alpha-\frac{a}{q})$ which has a
sharp peak at $\alpha=a/q$.
The
circle method philosophy relies (in most cases) on the fact that the
trigonometric polynomial $S(\alpha)$ has its maximum at rational
points, and this for varying values of~$N$. Such a regularity should lead to a
sharp peak at~$\alpha=a/q$. This argument goes in favour of saying
that the replacement of $\1_{[-\delta_q,\delta_q]}(\alpha-a/q)$ by
$F_q(\alpha-a/q)$ does not loose much. And then we may believe that the
lower bound of Lemma~\ref{Step2b} does not loose much either.

As noted earlier, Theorem~\ref{L2S} may be thought as optimal in its
form: when we use the Cauchy's inequality on the
\emph{equality~\eqref{L1S}}, the result is an equality when~$S(a/q)$
and~$\hat{\psi}^*_q(a)$ are proportional and therefore a \emph{plausible} set of
values for~$S(a/q)$ is available that takes care of the support condition.
This being said, Figures~\ref{test1png} and~\ref{test2png} seem to say that the lower
bound in Theorem~\ref{L2S} is often not sharp enough  when the
modulus~$q$ becomes of the size~$\sqrt{N}$. It also hints that, if we
are to rely on a single modulus, then Theorem~\ref{L2S} will be hard
to improve, as the minimum seems to increase, but very slowly.

The behaviour for a single~$q$ but varying length~$N$ is also
relevant; the plots we obtain show a very stable situation.
%%%%%%%%%%%%
\begin{figure}[!h]
  \includegraphics[scale=0.6]{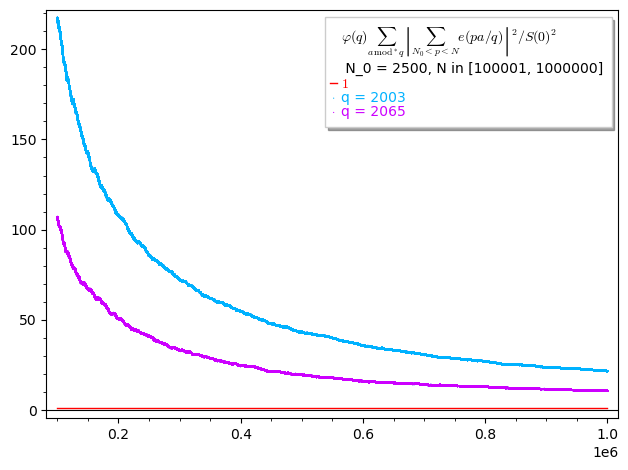}
  \caption{A single modulus but varying length, in the prime case}
  \label{follow1png}
\end{figure}
%%%%%%%%%%%%%

We supplement these figures with the same plot but concerning the
twin primes from the initial segment, see Figures~\ref{test1twinpng}
and~\ref{test2twinpng}. The appearance of separate families of
moduli where the quantity
\begin{equation*}
  \frac{\mu^2(q)}{\prod_{p|q,p>2}2/(p-2)}
  \sum_{a\mode q}|S(a/q)|^2/|S(0)|^2
  \qquad\biggl(S(\alpha)=\sum_{\substack{p\le N\\ \text{$p+2$ prime}}}e(p\alpha)\biggr)
\end{equation*}
have a different behaviour occurs clearly. The number of prime factors
is the deciding quantity.

%%%%%%%%%%%%
\begin{figure}[!h]
  \includegraphics[scale=0.6]{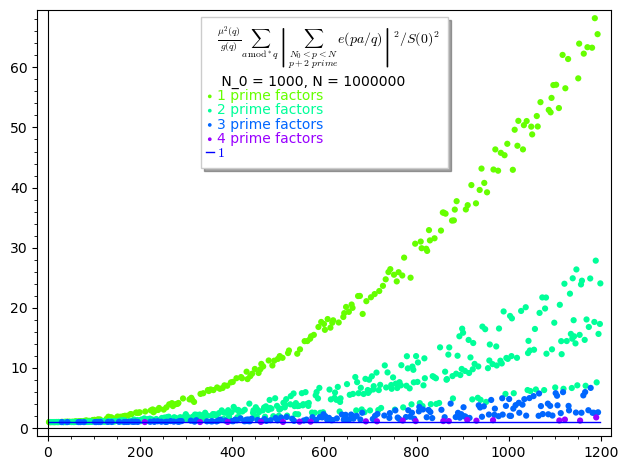}
  \caption{On the optimality of Theorem~\ref{L2S} for the twin primes}
  \label{test1twinpng}
\end{figure}
%%%%%%%%%%%%%
%%%%%%%%%%%%
\begin{figure}[!h]
  \includegraphics[scale=0.6]{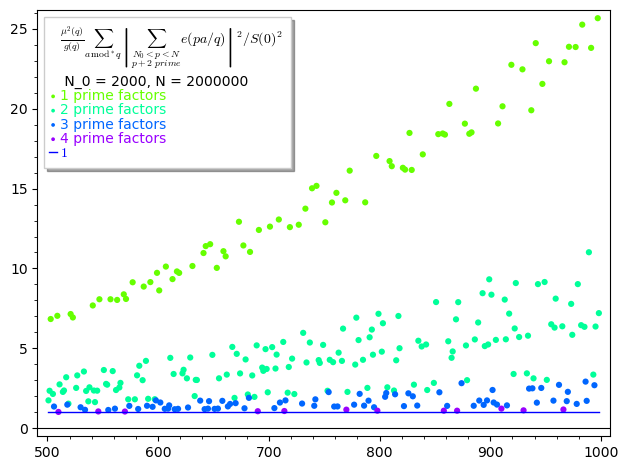}
  \caption{On the optimality of Theorem~\ref{L2S} for the twin primes}
  \label{test2twinpng}
\end{figure}
%%%%%%%%%%%%%

% \bibliographystyle{authordate1}
\bibliographystyle{plain}
%\bibliography{Local.bib}

\end{document}